\newtheorem{thm}{Theorem}[section]
\newtheorem{lem}[thm]{Lemma}
\newtheorem{prop}[thm]{Proposition}
\theoremstyle{definition}
\newtheorem{rem}[thm]{Remark}
\numberwithin{equation}{section}
\newcommand{\R}{\mathbb{R}}
\newcommand{\abs}[1]{\left\vert#1\right\vert}
\begin{document}
\title[Multiple interfaces and 1D Cahn-Hilliard equation]{Multiple-interface solutions of one dimensional generalized parabolic Cahn-Hilliard equation}

\author{Chao Liu}
\address{School of Mathematics and Information Science Guangzhou University,
Guangzhou 510006, People's Republic of China}
\email{chaoliuwh@whu.edu.cn}

\author{Jun Yang}
\address{School of Mathematics and Information Science, Guangzhou University,
Guangzhou 510006, People's Republic of China}
\email{jyang2019@gzhu.edu.cn}

\email{}
\thanks{Mathematics Subject Classification: {35K30; 35K58}}
\thanks{The work is supported by NSFC (No. 12171109),  Guangdong Basic and Applied Basic Research Foundation (No. 2022A1515010220)  and Basic Research Project of Guangzhou (No. 202201020094).}

\thanks{Corresponding author: Jun Yang, jyang2019@gzhu.edu.cn.}

\subjclass{}

\keywords{ Parabolic Cahn-Hilliard equation; Multiple-interface solutions; Toda system}


\dedicatory{}

\commby{}


\begin{abstract}
We consider one dimensional generalized parabolic Cahn-Hilliard equation
$$
u_t=-\partial_{xx}\big[\partial_{xx}u-W'(u)\big]+W''(u)\big[\partial_{xx} u -W'(u)\big], \qquad
\forall\, (t,x)\in [0,+\infty)\times {\mathbb R},
$$
where the function $W(\cdot)$ is the standard double-well potential.
For any given positive integer $k\geq2$, we construct a solution $u(t,x)$
with $k$ interfaces, which has the form
$$
u(t,x)\approx\sum_{j=1}^k(-1)^{j+1}\omega\big(x-\gamma_j(t)\big)-\frac{1+(-1)^k}{2}\qquad \text{as}\ t\rightarrow +\infty,
$$
where $\omega$ is the solution to the Allen-Cahn equation
$$
\omega''-W'(\omega)=0,\quad\omega'>0\quad\mbox{in }{\mathbb R},
\quad \omega(0)=0, \quad \omega(\pm\infty)=\pm 1.
$$
The interfaces are described by the functions $\gamma_j(t)$ with $j=1,\cdots,k$,
which are determined by a Toda system and have the forms
$$
\gamma_j(t)=\frac{1}{2\sqrt{2}}\left(j-\frac{k+1}{2}\right)\ln t  +O(1).
$$
The Toda system is different from the one that determine the dynamics of the multiple interfaces of solutions to one dimensional parabolic Allen-Cahn equation established by M. del Pino and K. Gkikas in {\em Proc. R. Soc. Edinb. Sect. A}, 148 (2018), 6: 1165-1199.
\end{abstract}

\maketitle

\tableofcontents

\section{Introduction}
The aim of this paper is to construct a family of multiple-interface solutions to the following one dimensional generalized  parabolic Cahn-Hilliard equation
\begin{equation}\label{eq1.1}
  u_t=-\partial_{xx}\big[\partial_{xx}u -W'(u)\big]+W''(u)\big[\partial_{xx} u -W'(u)\big] \qquad  \text{for}\ (t,x)\in [0,+\infty)\times {\mathbb R},
\end{equation}
where the functions $W', W''$ denote the derivatives of first and second orders of $W$ respectively.
The function $W(s)$ is the  double-well potential defined by
\begin{equation}\label{eq1.4}
  W(s)=\frac{1}{4}\left(1-s^2\right)^2.
\end{equation}
It has two non-degenerate local minimum points $u=+1$ and $u=-1$, which are stable equilibria of equation \eqref{eq1.1}.

This is a companion paper of \cite{ly}. The readers can refer to \cite{ly} and the references therein for the background.
The main purpose of the present paper is to concern \eqref{eq1.1} and construct radial solutions with multiple interfaces, which are driven by the Willmore flow.
The Willmore flow equation is given by
 \begin{equation}\label{willmore}
   V(t)=\Delta_{\partial S}H-\frac{1}{2}H^3+H\|A\|^2,
 \end{equation}
with $V(t)$ denotes the outer normal velocity at $x \in \partial S(t)$, $\Delta_{\partial S}$ is the Laplace-Beltrami operator on surface $\partial S(t)$, $H$ and $A$ are the mean curvature and the second fundamental form of $\partial S(t)$ respectively, and $\|A\|^2$ is the sum of squared coefficients of $A$.

For the radial case of \eqref{willmore}, the radius $\gamma_n(t)$ of the sphere $\Gamma_n(t)$ of $n$-dimension in $\R^{n+1}$ satisfies the equation
\begin{equation}\label{willmoreflowradial}
  \gamma_n'(t)=-\frac{1}{2}\Big(\frac{n-1}{\gamma_n(t)}\Big)^3+\frac{\big(n-1\big)^2}{\big(\gamma_n(t)\big)^3},
\end{equation}
which has a solution
\begin{equation}\label{sphere eq}
\gamma_n(t):=\sqrt[4]{-2(n-3)(n-1)^2t} ,
\end{equation}
where $t\leq0$ when $n\geq 3$ and $t\geq0$ when $n=2$.
Inspired by this fact, the authors studied the parabolic Cahn-Hilliard equation \cite{ly}
\begin{align}\label{eqn}
u_t=-\Delta\left[\Delta u -W'(u)\right]+W''(u)\left[\Delta u -W'(u)\right],
\qquad \forall\,   (t,  x)\in \widetilde{{\mathbb R}}\times{\mathbb R}^n,
\end{align}
where $n=2$ or $n\geq 4$,  $W(\cdot)$ is the typical double-well potential function and $\widetilde{\mathbb R}$ is given by
$$
\widetilde{\mathbb R}=\left\{
                    \begin{array}{rl}
                      (0,  \infty),  &\quad \mbox{if } n=2,
\\
                      (-\infty,  0),  & \quad\mbox{if } n\geq 4.
                    \end{array}
                  \right.
$$
They constructed a radial solution $u(t, |x|)$ with an interface (Shrinking Sphere in \eqref{sphere eq}), which was driven by the Willmore flow.
On the other hand, if $n=1$ or $n=3$, the trivial solution of \eqref{sphere eq} will lead to a trivial solution with an interface to \eqref{eqn}, see Remark 1.3 in \cite{ly}.

A natural question aries: {\em can interaction between multiple interfaces bring new solutions to \eqref{eqn} of one dimension (i.e. problem \eqref{eq1.1}) or three dimension?}
There is an affirmative answer to this question for the case of one dimensional parabolic Allen-Cahn equation (second order) by M. del Pino and K. Gkikas in \cite{dG1}.
It is apparent that new difficulties will appear in this nonlinear parabolic equation of fourth order.
For simplicity, we here will only concern the existence of solutions with multiple interfaces to problem \eqref{eq1.1}.
The readers can refer to Remark \ref{remark1.2}.

To achieve our goal, we need to introduce a function with an interface (one zero point), which will be our block solution to construct others with multiple interfaces.
The elliptic Allen-Cahn equation
  \begin{equation}\label{eqq}
  \left\{
\begin{array}{l}
\omega''+\omega-\omega^3=0,\ \ \ \omega'>0\qquad \text{for}\ x\in{\mathbb R},\\[2mm]
 \omega(0)=0\
 \ \text{and} \ \lim\limits_{x\rightarrow\pm\infty}\omega(x)=\pm 1,
  \end{array}
\right.
  \end{equation}
has a unique solution given by
\begin{equation}\label{domega}
 \omega(x)=\tanh\left(\frac{x}{\sqrt{2}}\right).
\end{equation}

For any positive integer $k\geq2$, we shall find a solution of equation \eqref{eq1.1} with the form
\begin{equation}\label{eq1.12}
u(t,x)= \sum_{j=1}^k(-1)^{j+1}\omega\big(x-\gamma_j(t)\big)-\frac{1}{2}\left[1+(-1)^k\right]+\phi(t,x),
\end{equation}
where $\phi(t,x)$ is a small perturbation when $t$ goes to $+\infty$.
 The functions
 $\gamma_1(t),\cdots,\gamma_k(t)$ satisfy
\begin{equation}\label{eq1.13}
  \gamma_1(t)<\gamma_2(t)<\cdots<\gamma_k(t)\quad \text{and}\quad \gamma_j(t)=-\gamma_{k+1-j}(t).
\end{equation}
We will establish the existence of solution satisfying this characteristic. In fact, the interface dynamic is determined at main order by the following Toda system
\begin{align}\label{toda11}
\nonumber&\frac{2\sqrt{2}}{3}\gamma'_j(t)
\,-\,
384\bigg\{
\,-\,
e^{-\sqrt{2}\big(\gamma_{j}(t)-\gamma_{j-2}(t)\big)}
\,+\,
2e^{-2\sqrt{2}\big(\gamma_j(t)-\gamma_{j-1}(t)\big)}
\\
&\qquad\qquad\qquad\qquad
\,-\,
2e^{-2\sqrt{2}\big(\gamma_{j+1}(t)-\gamma_j(t)\big)}
\,+\,
e^{-\sqrt{2}\big(\gamma_{j+2}(t)-\gamma_j(t)\big)}
\bigg\}=0,
  \end{align}
with $j=1,\cdots,k$,  where we have used the conventions that
$$
\gamma_{-1}=\gamma_{0}=-\infty
\quad\mbox{and}\quad
\gamma_{k+1}=\gamma_{k+2}=+\infty.
$$
The readers can refer to \eqref{system1}.
The solvability of the Toda system \eqref{toda11} is obtained in Lemma \ref{lem11}.
We will see that a solution of the above system is given by
\begin{equation}\label{eq0.1}
  \gamma^0_j(t)=\frac{1}{2\sqrt{2}}\left(j-\frac{k+1}{2}\right)\ln t \,+\, a_{jk},\qquad j=1,\cdots,k,
\end{equation}
for some explicit  constants $a_{jk}$ depending on $j$ and $k$ only.

 Our main result can be stated as follows.
\begin{thm}\label{thm1}
Let $k\geq 2$ be a positive integer and $\gamma^0_j(t)$ be the solution \eqref{eq0.1} of the Toda system \eqref{toda11}.
Then there exist a large number $T>0$ and a solution $u(t,x)$ of $(\ref{eq1.1})$ defined on $[T,+\infty)\times{\mathbb R}$ of the form

\begin{equation*}
u(t,x)= \sum_{j=1}^k(-1)^{j+1}\omega\big(x-\gamma_j(t)\big)-\frac{1}{2}\left[1+(-1)^k\right]+\phi(t,x),
\end{equation*}

\noindent with
\begin{equation*}
\gamma_j(t)=\gamma^0_j(t)+h_j(t),\qquad j=1,\cdots,k,
\end{equation*}
where
\begin{equation*}
  \gamma^0_j(t)=\frac{1}{2\sqrt{2}}\left(j-\frac{k+1}{2}\right)\ln t \,+\, a_{jk},\qquad j=1,\cdots,k,
\end{equation*}
for some explicit  constants $a_{jk}$ depending on $j$ and $k$ only,
the small perturbations $h_j(t)$ and $\phi(t,x)$  satisfy that
$$
\lim\limits_{t\rightarrow +\infty}h_j(t)=0\quad\text{and}\quad\lim\limits_{t\rightarrow +\infty}\phi(t,x)=0 \quad \text{uniformly in}\ x\in {\mathbb R}.
$$
\qed
\end{thm}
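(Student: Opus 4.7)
The overall strategy follows the by-now classical Lyapunov--Schmidt / infinite-dimensional reduction adapted to the parabolic setting. Writing the equation as $u_t = -\mathcal{L}_u \mathcal{A}(u)$ with $\mathcal{L}_u := -\partial_{xx} + W''(u)$ and Allen--Cahn operator $\mathcal{A}(u) := -\partial_{xx}u + W'(u)$, I seek a solution in the form $u(t,x) = U(t,x) + \phi(t,x)$ with
\[
U(t,x) := \sum_{j=1}^k (-1)^{j+1}\omega\bigl(x - \gamma_j(t)\bigr) - \tfrac{1+(-1)^k}{2}.
\]
The plan splits into four stages: (i) estimating the error $E := -U_t - \mathcal{L}_U \mathcal{A}(U)$ of the ansatz; (ii) developing a linear theory for the fourth-order parabolic operator $\phi \mapsto \phi_t + \mathcal{L}_U^2 \phi$ in suitable exponentially weighted norms; (iii) deriving the reduced Toda system for $\gamma_j(t)$ via orthogonality against the approximate kernel; (iv) closing the argument by a contraction fixed point on the coupled unknowns $(\phi, h_1,\ldots,h_k)$.

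For step~(i), since each $\omega_j := (-1)^{j+1}\omega(\,\cdot\, - \gamma_j)$ satisfies $\mathcal{A}(\omega_j) = 0$, a Taylor expansion shows that $\mathcal{A}(U)$ is an interaction term of size $e^{-\sqrt{2}(\gamma_{j+1}-\gamma_j)}$ with leading part approximately proportional to $\sum_j \omega_j'$. The crucial and distinctive feature of the Cahn--Hilliard case is that this leading part is essentially \emph{annihilated} by the outer $\mathcal{L}_U$, since $\mathcal{L}_\omega \omega' = 0$; hence $\mathcal{L}_U \mathcal{A}(U)$ only begins at order $e^{-2\sqrt{2}(\gamma_{j+1}-\gamma_j)}$ together with $e^{-\sqrt{2}(\gamma_{j+2}-\gamma_j)}$ from the next-nearest neighbour. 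This explains why the Toda system~\eqref{toda11} mixes next-nearest neighbour exponentials with squared nearest-neighbour ones, in sharp contrast to the purely nearest-neighbour system of del Pino--Gkikas for Allen--Cahn, and it is the first place where the fourth-order nature enters.

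For the linear theory in step~(ii), the model operator $\mathcal{L}_U^2$ is the square of a Schr\"odinger-type operator whose approximate kernel is spanned by $\{\omega_j'\}_{j=1}^k$, since $\mathcal{L}_\omega \omega' = 0$ implies $\mathcal{L}_\omega^2 \omega' = 0$. I would work modulo this $k$-dimensional kernel by introducing Lagrange multipliers $c_j(t)\omega_j'$ on the right-hand side, solving the projected problem $\phi_t + \mathcal{L}_U^2 \phi = f - \sum_j c_j \omega_j'$ under the orthogonality conditions $\int \phi\, \omega_j'\, dx = 0$, and establishing an a priori estimate $\|\phi\|_* \lesssim \|f\|_{**}$ in weighted $L^\infty$-norms decaying in both $t$ and $x$. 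This linear theory is the main technical obstacle: the fourth derivative in $\mathcal{L}_U^2$ forces more delicate parabolic regularity and barrier constructions than the Allen--Cahn case, and the cross terms in $\mathcal{L}_U^2 - \sum_j \mathcal{L}_{\omega_j}^2$ must be absorbed as small perturbations within these norms, exploiting the large separation of the $\gamma_j$.

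In step~(iii), requiring the Lagrange multipliers to vanish by testing the full equation against $\omega_j'$ and using self-adjointness of $\mathcal{L}_U$ to move derivatives off $\omega_j'$ yields an ODE system for $\gamma_j(t)$ which, to leading order, is exactly~\eqref{toda11}. Writing $\gamma_j(t) = \gamma_j^0(t) + h_j(t)$ with $\gamma_j^0$ the explicit solution from Lemma~\ref{lem11} and linearising the Toda system at $\gamma^0$, one obtains a linear ODE system for $h_j$ that, thanks to the logarithmic growth of $\gamma_j^0$, is solvable with $h_j(t)\to 0$ as $t\to+\infty$. Finally, in step~(iv), the coupled problem for $(\phi, h_1,\ldots,h_k)$ is cast as a fixed point in the product of the weighted spaces introduced above; a standard contraction argument, powered by the smallness of $E$ and by the quadratic smallness of the nonlinear remainder $N(\phi) := \mathcal{L}_{U+\phi}\mathcal{A}(U+\phi) - \mathcal{L}_U \mathcal{A}(U) - \mathcal{L}_U^2\phi - W'''(U)\mathcal{A}(U)\phi$, produces the desired solution on $[T,+\infty)$ for $T$ sufficiently large, with the stated decay properties of $\phi$ and $h_j$.
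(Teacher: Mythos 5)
Your overall architecture (ansatz, projected linear problem with multipliers $c_j(t)$, reduction to the Toda system, contraction on $(\phi,\vec h)$) is the same as the paper's, and you correctly identify the structural reason the Toda system mixes $e^{-2\sqrt{2}(\gamma_{j+1}-\gamma_j)}$ with $e^{-\sqrt{2}(\gamma_{j+2}-\gamma_j)}$. However, there are two concrete gaps. First, your ansatz $U$ carries no correction layers, and the cancellation you invoke is incomplete: $\mathcal{L}_\omega$ annihilates only the component of the interaction along $\omega'$, not the full leading term. Near each transition point the quantities $3(1-\omega)^2 f^+_{i+1}+3(1+\omega)^2 f^-_{i-1}$ have a nontrivial part orthogonal to $\omega'$, and near each midpoint the product $6f^-_i f^+_{i+1}$ is of size $e^{-\sqrt 2(\gamma_{i+1}-\gamma_i)}\sim t^{-1/2}$ and is \emph{not} killed by $\partial_{xx}-W''(\pm1)$ (it is essentially constant in $x$ there). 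Consequently $E=-U_t+F(U)$ is only $O(t^{-1/2})$, which does not fit in the weighted space $C_\Phi$ with $\Phi\sim t^{-3/4-\sigma/(8\sqrt2)}$; the resulting $\phi$ would be $O(t^{-1/2})$ and the terms $\int F'(z)[\phi]\,\omega_i'$, $\int N(\phi)\,\omega_i'$ in the reduced equations would then be comparable to the $O(t^{-1})$ Toda terms, destroying the reduction. This is exactly why the paper inserts the two correction families $\xi_i$ (solving $[\partial_{xx}-W''(1)]\xi_i=6f^-_if^+_{i+1}$ near the midpoints) and $\widehat\xi_j$ (removing the $\textrm{K}$-projection of $\widehat{\textrm I}_{1,i}+\widehat{\textrm I}_{2,i}$ near each interface), after which the error reduces to $d_i\omega_i'-d_{i\pm1}\omega_{i\pm1}'+\mathcal E_i$ with $\mathcal E_i=O(t^{-3/4-\sigma/(4\sqrt2)})$ and the projection cleanly produces \eqref{system1}.

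Second, your plan for the linear theory relies on ``barrier constructions'' and ``exponentially weighted norms''. Both fail here: the operator $\partial_t+\partial_{xxxx}+\cdots$ has no maximum principle (the biharmonic heat kernel changes sign), so comparison/barrier arguments are unavailable; and, as the paper points out in Remark 2.5, the biharmonic semigroup does not propagate exponential decay in $x$ — even for data decaying like $e^{-\sigma|x|}$ the solution only decays polynomially. The paper's substitute is a blow-up/compactness argument in the polynomially weighted space $C_\Phi$ (with weight $(\,|x-\gamma^0_j|+1)^{-\alpha}$), combined with the explicit bound $|Q(\tau,y)|\le C\tau^{-1/4}\exp\{-\tau^{-1/4}|y|\}$ for the kernel of $\partial_t+\partial_{yyyy}-4\partial_{yy}$ and the coercivity of $\|\phi''-W''(\omega)\phi\|_{L^2}^2$ on the orthogonal complement of $\omega'$ to rule out nontrivial limits. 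Without replacing your step (ii) by something of this kind, the a priori estimate $\|\phi\|_*\lesssim\|f\|_{**}$ cannot be established, and the fixed point in step (iv) does not close.
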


More words will be provided in the following to give some explanations.

\begin{rem}\label{remark1.2}
{\it
Notice that the above theorem shows that the existence of multiple-interface solutions to equation \eqref{eq1.1} via a translation with respect to the time variable $t$.

There are many papers using the Toda system to describe the interactions between neighbouring interfaces for (elliptic or parabolic) Alle-Cahn equation.
The reader can refer to  \cite{dG1, dG2, delPinoKowWei1, delPinoKowWei2, delPinoKowWeiYang}
and the references therein.

As far as we know that the above dynamic law of of multiple interfaces given by \eqref{toda11} is firstly found.
This constitutes the new ingredient of the present paper.
The energy
\begin{equation*}
  \mathcal{W}(u)=\frac{1}{2}\int_\R\left[u_{xx}-W'(u)\right]^2\mathrm{d}x,
\end{equation*}
is a Lyapunov functional for \eqref{eq1.1}. If $u(t,x)$ is a solution of \eqref{eq1.1}, we have
\begin{equation}\label{eq1.2}
  \frac{\mathrm{d}}{\mathrm{d}t}\mathcal{W}\big(u(\cdot,t)\big)=-\int_\R\abs{u_t}^2\mathrm{d}x,
\end{equation}
hence $\mathcal{W}(\cdot)$ is non-increasing along any solution of \eqref{eq1.1}.
It is easy to check that Toda system \eqref{toda11} comes from \eqref{eq1.2} as $u(t,x)$ takes the main order term of \eqref{eq1.12}.
Moreover, \eqref{eq1.1} corresponds to the $\mathrm{L}^2$-gradient flow
for $\mathcal{W}(\cdot)$ in $\mathrm{H}^2(\R)$.

It shows a different phenomenon from that of radial multiple interfaces to
parabolic Allen-Cahn equation
\begin{align}\label{al}
u_t=\Delta u-W'(u)=\Delta u+u-u^3\qquad \text{in}\ (-\infty,0]\times{\mathbb R}^n,
\end{align}
which is determined by the system, for $j=1,\cdots,k$
\begin{equation*}
  \rho'_j(t)+\frac{n-1}{\rho_j(t)}-12\sqrt{2}\left\{e^{-\sqrt{2}\big(\rho_{j+1}(t)-\rho_j(t)\big)}-e^{-\sqrt{2}\big(\rho_{j}(t)-\rho_{j-1}(t)\big)}\right\}=0,
\end{equation*}
with $\rho_0=-\infty$ and $\rho_{k+1}=+\infty$,
which has been studied by X. Chen, J.-S. Guo and H. Ninomiya in \cite{cgh} with $n=1$ and $k=2$,
M. del Pino and K. Gkikas in \cite{dG1} with $n=1$ and $k\geq 2$, and \cite{dG2} with $n\geq2$.

Let us explain the  difference of interface dynamics in \eqref{eq1.1} and \eqref{al} with $n=1$.
Consider the energy
\begin{equation*}
 \mathbf{ W}(u):=\int_\R\left[\frac{1}{2}\abs{u_x}^2+W(u)\right]\mathrm{d}x,
\end{equation*}
which is a Lyapunov functional for \eqref{al} with $n=1$.
Similarly, there also holds that for any solution $u(t,x)$ of \eqref{al} with $n=1$,
\begin{equation}
  \frac{\mathrm{d}}{\mathrm{d}t}\mathbf{W}\big(u(\cdot,t)\big)=-\int_\R\abs{u_t}^2\mathrm{d}x.
\label{gradientflowAllenCahn}
\end{equation}
Choosing
$$
u(t,x)= \sum_{j=1}^k(-1)^{j+1}\omega\big(x-\rho_j(t)\big)-\frac{1}{2}\left[1+(-1)^k\right]
$$
in \eqref{gradientflowAllenCahn},
we find that the terms in the left hand side formally take the forms $e^{-\sqrt{2}\{\rho_{j}-\rho_{j-1}\}}$.
However, for \eqref{eq1.2} with a profile in \eqref{eq1.12} the main terms are $e^{-2\sqrt{2}\{\gamma_j-\gamma_{j-1}\}}$ and $e^{-\sqrt{2}\{\gamma_j-\gamma_{j-2}\}}$
due to the vanishing of the coefficients of the terms $e^{-\sqrt{2}\{\gamma_j-\gamma_{j-1}\}}$ of first order in formal expansions.
Roughly speaking, the distance between neighbouring interfaces of the  equation \eqref{eq1.1} is almost a half for equation \eqref{al} with $n=1$.

The interaction of neighbouring two interfaces is of importance for Allen-Cahn equation \eqref{al},
while an interface of Cahn-Hilliard equation \eqref{eq1.1} can be affected by four neighbouring ones in the cluster.
Hence, more work will be needed here to handle this case and prove Theorem \ref{thm1}.
Moreover, this is the reason that we will encounter delicate situations in the construction of radial
multiple interfaces to Cahn-Hilliard equation of higher dimensions (the geometric quantities such as curvatures of interfaces will also play a crucial role),
which will be left to the forthcoming paper.
\qed
}
\end{rem}

\begin{rem}\label{remark1.3}
{\it
We prove our main result in Theorem \ref{thm1} by exploiting Lyapunov-Schmidt reduction method. This method has been extensively
applied to solve the existence problem  of solutions to various kinds of equations \cite{cdm,CDD,dd1,dd2,dG2}. Comparing to the previous works, we are facing three difficulties during the process of dealing with the problem \eqref{eq1.1} as the following:

\noindent{\textbf{(1).}}
Firstly, we need to construct a suitable approximate
solution of equation \eqref{eq1.1} to ensure that the error term is small enough. To get it, some correcting functions are introduced in Section \ref{sec:a}.
The process is divided into two steps: Step one, we give a correcting function around each mid point of adjacent transition points.
Step two, another correcting function is provided around each transition point.

\noindent{\textbf{(2).}}
Secondly, equation \eqref{eq1.1} is a nonlinear parabolic equation of fourth order,
which does not satisfy the Maximum Principle since the kernel of the biharmonic parabolic operator is sign-changing in \cite{FG}.
To overcome it, we employ the blow-up technique and representation of parabolic heat kernel in Section \ref{sec:lp}.
The same tools also are used by us to study the existence of solutions with single radial interface of high-dimensional equation \eqref{eqn} in \cite{ly}.

\noindent{\textbf{(3).}}
Finally, the related heat kernel for the linearization operator of equation \eqref{eq1.1} is more intricate, which leads to hard task to get a suitable priori estimate of
linearized problem of equation \eqref{eq1.1}.
To solve it, we will modify the decay of the error term $E(t,x)$ in \eqref{eq7.5} with polynomial decay in Section \ref{sec:a} and perform more complicated calculations in Section \ref{sec:lp}.
\qed
}
\end{rem}

The paper is organized as follows.

\noindent$\clubsuit$
In Section \ref{sec:a}, we construct an approximate solution of problem \eqref{eq2.1.1} and establish an equivalent problem \eqref{eq7.4} in terms of a perturtation term $\phi$.
To deal with non-invertibility of problem \eqref{eq7.4} caused by the existence of kernel of the corresponding linear operator, a projected version of the nonlinear problem will be given in (\ref{eq7.7})-(\ref{eq7.8}).

\noindent$\clubsuit$
In Section \ref{sec:lp}, we collect some results of linear parabolic equations with  biharmonic operator and obtain the solvability of linear projection problem (\ref{eq3.1}).
The main results of this section will be collected in Proposition \ref{prop2}.

\noindent$\clubsuit$
Section \ref{sec:nl} is devoted to solving the nonlinear problem (\ref{eq7.7})-(\ref{eq7.8}) by the arguments of  the fixed-point theorem.

\noindent$\clubsuit$
The last step (given in Section \ref{sec:tc}) is to choose a suitable parameter $\vec{h}=\{h_1(t),\cdots,h_k(t)\}$ with a form of \eqref{decgama} such that all $c_i(t)=0$ with $i=1,\cdots,k$ in equation (\ref{eq7.7}), and finish the proof of Theorem \ref{thm1}.

\noindent$\clubsuit$
Besides, some tedious but straightforward analysis such as the proofs of Lemmas \ref{esxi}-\ref{lem33}
will be collected as Appendices in Section \ref{appendix}.

\section{The ansatz} \label{sec:a}

\subsection{Preliminaries}
Before proving the main theorem, we need to derive that the decay estimates of the basic layer $\omega$ in \eqref{domega} and the kernel of a  linear operator of fourth order.

Let $\omega$ be the solution of problem $(\ref{eqq})$. It is easy to see that
\begin{align}
\omega(x)-1=-2e^{-\sqrt{2}x}+2e^{-2\sqrt{2}x}+O\left(e^{-3\sqrt{2}x}\right)\qquad \text{for}\ x>1,
\nonumber\\[3mm]
 \omega(x)+1=2e^{\sqrt{2}x}-2e^{2\sqrt{2}x}+O\left(e^{3\sqrt{2}x}\right)\qquad \hspace{1.0cm} \text{for}\ x<-1,
\label{asymptotic}\\[3mm]
 \omega'(x)=2\sqrt{2}e^{-\sqrt{2}\abs{x}}+O\left(e^{-2\sqrt{2}x}\right)\hspace{2.76cm} \text{for}\ \abs{x}>1.
\nonumber
\end{align}
These are well-known results.

Next we consider the kernel of a  linear operator of fourth order. Main result is stated by the following Lemma. For its proof,
see Lemma $3.6$ in \cite{ly} in details.
\begin{lem}
Let $\omega(x)$ be the solution of \eqref{eqq}.
Then any solution of the linear homogeneous problem
\begin{equation*}
 \big[-\partial_{xx}+W'(\omega(x))\big]^2\varphi(x)=0 \qquad\text{for}\ x\in{\mathbb R},
\end{equation*}
with $\varphi, \partial_{xx}\varphi\in L^\infty({\mathbb R})$, has the form
\begin{equation*}
  \varphi(x)=\lambda \omega'(x),
\end{equation*}
with some constant $\lambda\in{\mathbb R}$, where the potential $W(s)$ is defined by \eqref{eq1.4}.
\qed
\end{lem}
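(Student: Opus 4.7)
The plan is to factor the fourth-order kernel equation through the associated second-order linearised operator $L := -\partial_{xx} + W''(\omega(x))$. (I read $W'(\omega)$ in the statement as $W''(\omega)$, since $L$ is the linearisation of the Allen–Cahn operator around $\omega$, with $L\omega' = 0$ following by differentiating $\omega''-W'(\omega)=0$.) Setting $\psi := L\varphi$, the hypothesis becomes $L\psi = 0$. I would first show that any bounded $\psi$ in the kernel of $L$ is a constant multiple of $\omega'$; then show that $L\varphi = c\omega'$ with $\varphi$ bounded forces $c=0$ by a solvability (Fredholm) computation; and finally conclude $\varphi = \lambda\omega'$ once more.

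For the first step, I would use the exponential convergence $\omega(x) \to \pm 1$ from \eqref{asymptotic}, so that $W''(\omega(x)) = 3\omega^2 - 1 \to 2$ exponentially as $x\to\pm\infty$. Standard asymptotic ODE theory for $-\psi'' + (2+o(1))\psi = 0$ then gives, at each end, a one-dimensional space of solutions decaying like $e^{-\sqrt{2}|x|}$ and a complementary one growing like $e^{\sqrt{2}|x|}$. A globally bounded solution must lie in both one-dimensional decaying subspaces; since $\omega'(x)\sim 2\sqrt{2}\,e^{-\sqrt{2}|x|}$ already sits in this intersection, the intersection is exactly $\mathbb{R}\,\omega'$. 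Hence $\psi = c\,\omega'$ for some $c\in\mathbb{R}$.

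For the second step, I would multiply $L\varphi = c\omega'$ by $\omega'$ and integrate over $\mathbb{R}$, using self-adjointness of $L$:
\begin{equation*}
c\int_{\mathbb{R}} (\omega')^2\,\mathrm{d}x \,=\, \int_{\mathbb{R}} (L\varphi)\,\omega'\,\mathrm{d}x \,=\, \bigl[-\varphi'\omega' + \varphi\,\omega''\bigr]_{-\infty}^{\infty} + \int_{\mathbb{R}} \varphi\,(L\omega')\,\mathrm{d}x.
\end{equation*}
The last integral vanishes because $L\omega'=0$. The boundary terms vanish because $\omega',\omega''$ decay exponentially and both $\varphi$ and $\varphi'$ are bounded on $\mathbb{R}$; the bound on $\varphi'$ follows from the hypothesis $\varphi,\partial_{xx}\varphi \in L^\infty$ by the interpolation inequality $\|\varphi'\|_\infty \leq 2\sqrt{\|\varphi\|_\infty\,\|\varphi_{xx}\|_\infty}$ on the line. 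Since $\int (\omega')^2 > 0$, this forces $c=0$, so $L\varphi = 0$, and the first-step analysis gives $\varphi = \lambda\omega'$.

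The main obstacle is really only the boundary-term justification in the integration by parts: one must be sure that the mere boundedness of $\varphi$ and $\varphi_{xx}$ transfers to $\varphi'$ so that $\varphi'\omega' \to 0$ at $\pm\infty$. The interpolation inequality quoted above is standard and settles the point, after which everything reduces to the well-understood spectral picture of the Allen–Cahn linearisation. Since the same argument has already been carried out in Lemma 3.6 of \cite{ly}, I would simply indicate these steps and refer there for the routine verifications.
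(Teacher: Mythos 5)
Your proposal is correct and is essentially the argument the paper relies on: the paper gives no proof here but defers to Lemma 3.6 of \cite{ly}, and the expected proof is exactly your factorization $\big[-\partial_{xx}+W''(\omega)\big]^2\varphi=0$ through $L=-\partial_{xx}+W''(\omega)$, nondegeneracy of the bounded kernel of $L$ via the asymptotic behaviour $e^{\pm\sqrt{2}x}$, and the Fredholm/integration-by-parts step forcing $c=0$. You also correctly spot that $W'(\omega)$ in the statement must be read as $W''(\omega)$, and your use of Landau's interpolation inequality to control $\varphi'$ from $\varphi,\varphi_{xx}\in L^\infty$ properly closes the only delicate point, the vanishing of the boundary terms.
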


\subsection{The setting-up to prove Theorem \ref{thm1}}\label{subsec}
We will prove the validity of Theorem \ref{thm1} for any integer $k\geq2$.
Let $\check{u}(t,x)$ be
 a solution of \eqref{eq1.1}, by a translation $u(t,x)=\check{u}(t+T,x)$,  then we have that $u$ satisfies  the following problem
\begin{equation}\label{eq2.1.1}
  u_t=-\partial_{xx}\big[\partial_{xx}u-W'(u)\big]+W''(u)\big[\partial_{xx}u-W'(u)\big]\qquad \text{in}\ [T,+\infty)\times{\mathbb R},
\end{equation}
where $W$ is defined in (\ref{eq1.4}) and $T$ is a large positive number which will be adjusted at each step.
For the convenience of notation, we set
\begin{equation}\label{deF}
  F(u):=-\partial_{xx}\big[\partial_{xx}u-W'(u)\big]+W''(u)\big[\partial_{xx}u-W'(u)\big],
\end{equation}
and then get
\begin{equation}\label{deF'}
  F'(u)[v]:=-\partial_{xx}\big[\partial_{xx}v-W''(u)v\big]+W''(u)\big[\partial_{xx}v-W''(u)v\big]+W'''(u)\big[\partial_{xx}u-W'(u)\big]v.
\end{equation}
The strategy to solve \eqref{eq2.1.1} will be sketched in Sections \ref{section2.2.1}-\ref{section2.2.2}.

\subsubsection{The approximate solution}\label{section2.2.1}
Let $k\geq2$ be an integer. We define the first approximate solution of \eqref{eq2.1.1} as the following

\begin{equation}\label{eq2.1.2}
  z_1(t,x):=\sum_{j=1}^k(-1)^{j+1}\omega\big(x-\gamma_j(t)\big)-\frac{1}{2}\left[1+(-1)^k\right],
\end{equation}
where $\omega$ is the solution of (\ref{eqq}), the functions $\gamma_j(t)$ are ordered and symmetric, that is
\begin{equation}\label{eq2.2}
  \gamma_1(t)<\gamma_2(t)<\cdots<\gamma_{k-1}(t)<\gamma_k(t)\qquad \text{and}\qquad\gamma_j(t)=-\gamma_{k+1-j}(t).
\end{equation}
We set $\vec{\gamma}(t):=\big[\gamma_1(t),\cdots,\gamma_k(t)\big]^\top$
 and decompose $\vec{\gamma}(t)$ into two parts,
\begin{align}\label{decgama}
\vec{\gamma}(t):=\vec{\gamma}^0(t)+\vec{h}(t),
\end{align}
where $\vec{\gamma}^0(t):=\big[\gamma_1^0(t),\cdots,\gamma_k^0(t)\big]^\top$  with
\begin{align}\label{degama0}
\gamma^0_j(t):=\frac{1}{2\sqrt{2}}\left(j-\frac{k+1}{2}\right)\ln\big[1152t\big]+a_j,
\qquad j=1,\cdots, k,
\end{align}
and $a_j$ are some constants to be determined in Section \ref{sec:tc}.
In the above, we also have set
$$
\vec{h}(t):=\big[h_1(t),\cdots,h_k(t)\big]^\top,
$$
where the functions $h_j(t)$ satisfy the following assumptions
\begin{equation}\label{eq2.4}
  \|h_j\|_{L^\infty[1,+\infty)}+\|\abs{t} h'_j\|_{L^\infty[1,+\infty)}\leq 1\quad \text{and}\quad \lim_{t\rightarrow+\infty}\big[\abs{h_j(t)}+\abs{h'_j(t)}\big]=0.
\end{equation}

In the next, we will introduce some correcting functions to improve the error term
\begin{equation*}
  \partial_tz_1(t,x)-F(z_1(t,x)).
\end{equation*}
This process is divided into the following two steps in details.

\noindent \textbf{Step 1:}  {\em The first correction}

We here focus on the neighbourhood of  $\frac{\gamma_i(t)+\gamma_{i+1}(t)}{2}$ with  $1\leq i\leq k-1$.
Substituting $z_1(t,x)$ into equation \eqref{eq2.1.1},
 by \eqref{asymptotic} and  equation \eqref{eqq},  we get that
\begin{align*}
&\partial_tz_1(t,x)-F(z_1(t,x))
\\[2mm]
&=\partial_tz_1(t,x)
\ +\ (-1)^{i+1}\Big[\partial_{xx}-W''\big(z_1(t,x)\big)\Big]
\Bigg\{W'\big(f^-_i(t,x)+1\big)
-W'\big(f^+_{i+1}(t,x)-1\big)
\\[2mm]&\quad
-W'\big(1+f^-_i(t,x)-f^+_{i+1}(t,x)+f(t,x)\big)
+\sum_{\substack{j=1,\\ j\neq i,i+1}}^{k}(-1)^{j+i}\omega''\big(x-\gamma_j(t)\big)\Bigg\}
\\[2mm]
&=(-1)^{i+1}\Big[\partial_{xx}-W''\big(z_1(t,x)\big)\Big]\Bigg\{\underbrace{6f^-_i(t,x)f^+_{i+1}(t,x)}_{\textrm{I}_i(t,x)}
\ +\ O\big(f(t,x)\big)
\\[2mm]
&\qquad\qquad+3f^-_i(t,x)f^+_{i+1}(t,x)\bigg[f^-_i(t,x)-f^+_{i+1}(t,x)\bigg]
+\sum_{\substack{j=1,\\ j\neq i,i+1}}^{k}(-1)^{j+i}\omega''\big(x-\gamma_j(t)\big)\Bigg\}
\\[2mm]
&\quad
+\sum_{j=1}^k(-1)^{j}\gamma'_j(t)\omega'\big(x-\gamma_j(t)\big),
\end{align*}
where have defined the functions
$$
f^{\pm}_{i}(t,x):=\omega\big(x-\gamma_i(t)\big)\pm1
\quad\mbox{ with } i=1,\cdots,k,
$$
and
$$f(t,x):=\sum_{j=1}^{i-1}(-1)^{j+i}\bigg[\omega\big(x-\gamma_j(t)\big)-1\bigg]
+\sum_{j=i+2}^{k}(-1)^{j+i}\bigg[\omega\big(x-\gamma_j(t)\big)+1\bigg].$$

To get a better approximation, we need to cancel the terms $\textrm{I}_i(t,x)$ in the above estimate. In order to achieve it, we define  correcting functions $\xi_i(t,x)$ as the following
\begin{align}\label{dxi}
 \nonumber \xi_i(t,x):&=6e^{\sqrt{2}x}\int_{\frac{\gamma_i(t)+\gamma_{i+1}(t)}{2}}^xe^{-2\sqrt{2}s}\int_{-\infty}^se^{\sqrt{2}\tau}f^-_i(t,\tau)f^+_{i+1}(t,\tau)
 \,{\mathrm d}\tau \,{\mathrm d}s
 \\[2mm]
 &+12e^{\sqrt{2}\big\{x-\gamma_{i+1}(t)\big\}}\int_{0}^{e^{\frac{1}{\sqrt{2}}\big\{\gamma_i(t)+\gamma_{i+1}(t)\big\}}}
 \frac{y}{\big(y+e^{\sqrt{2}\gamma_i(t)}\big)
  \big(y+e^{\sqrt{2}\gamma_{i+1}(t)}\big)}\,{\mathrm d}y,
\end{align}
with $i=1,\cdots,k-1$, which solve
\begin{equation}\label{xi'seq}
  \big[\partial_{xx}-W''(1)\big]\xi_i(t,x)=6f^-_i(t,x)f^+_{i+1}(t,x),
\end{equation}
where we notice that $W''(1)=2$.

The estimates of the functions $\xi_i(t,x)$ are given by the following lemma, whose proof will be postponed to Appendix \ref{section6.1}.

\begin{lem}\label{esxi}
Let $\sigma\in(0,\sqrt{2})$ and $1\leq i\leq k-1$. There hold that
\begin{align*}
  &\abs{\xi_i(t,x)}+\abs{\partial_x\xi_i(t,x)}+\abs{t}\abs{\partial_t\xi_i(t,x)}\leq Ct^{-\frac{1}{2}}e^{-\sigma\abs{x-\gamma_i(t)}}\qquad\quad \text{if}\  x\leq\gamma_i(t),
 \\[3mm]
 &\abs{\xi_i(t,x)}+\abs{\partial_x\xi_i(t,x)}+\abs{t}\abs{\partial_t\xi_i(t,x)}\leq Ct^{-\frac{1}{2}}e^{-\sigma\abs{x-\gamma_{i+1}(t)}}\qquad\ \text{if}\ x\geq\gamma_{i+1}(t),
\end{align*}
and
\begin{equation*}
   \abs{\xi_i(t,x)}+\abs{\partial_x\xi_i(t,x)}+\abs{t}\abs{\partial_t\xi_i(t,x)}\leq Ct^{-\frac{1}{2}}\qquad \text{if}\  \gamma_i(t)\leq x\leq\gamma_{i+1}(t),
\end{equation*}
where $C>0$ is a constant  which does not depend on $t$ and $x$. Moreover, the function $\xi_i(t,x)$ is symmetric about the point $x=\frac{\gamma_i(t)+\gamma_{i+1}(t)}{2}$ with respect to the variable $x$.
\qed
\end{lem}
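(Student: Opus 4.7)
The plan has three stages. First, I would verify that the explicit formula \eqref{dxi} does solve the ODE \eqref{xi'seq}. Since $W''(1)=2$, the operator $\partial_{xx}-W''(1)$ has characteristic roots $\pm\sqrt{2}$. The first term in \eqref{dxi} is simply the standard variation-of-parameters expression for a particular solution built from the exponentials $e^{\pm\sqrt{2}x}$; direct differentiation confirms it produces the inhomogeneity $6 f_i^-(t,x) f_{i+1}^+(t,x)$. The second term is a scalar multiple of $e^{\sqrt{2}x}$ and therefore lies in the kernel of $\partial_{xx}-2$; its sole purpose is to calibrate the asymptotic behaviour of $\xi_i$ as $x\to\pm\infty$ so that the symmetry in Step 3 holds.

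Second, I would establish the quantitative estimates by splitting the real line into the three regions $\{x\leq\gamma_i\}$, $\{\gamma_i\leq x\leq\gamma_{i+1}\}$, $\{x\geq\gamma_{i+1}\}$, exploiting \eqref{asymptotic}. The key observation is that the product $f_i^-\,f_{i+1}^+$ is uniformly small: it behaves like $-4e^{\sqrt{2}(x-\gamma_{i+1})}$ for $x\ll\gamma_i$, like $-4e^{-\sqrt{2}(\gamma_{i+1}-\gamma_i)}$ for $x$ in the middle, and like $-4e^{-\sqrt{2}(x-\gamma_i)}$ for $x\gg\gamma_{i+1}$. Combining \eqref{degama0} with the smallness condition \eqref{eq2.4} gives $\gamma_{i+1}(t)-\gamma_i(t)=\tfrac{1}{2\sqrt{2}}\ln(1152 t)+O(1)$, so the middle-region bound is precisely of order $t^{-1/2}$, which is the source of the $t^{-1/2}$ factor in the conclusion. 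Inserting these bounds into \eqref{dxi} and integrating: the inner $\tau$-integral is controlled by its value at the upper endpoint due to the weight $e^{\sqrt{2}\tau}$, and the outer $s$-integral then yields, after combining with the prefactor $e^{\sqrt{2}x}$, exponential Green-function decay $e^{-\sigma|x-\gamma_i|}$ or $e^{-\sigma|x-\gamma_{i+1}|}$ on the outside of $[\gamma_i,\gamma_{i+1}]$. The small loss from $\sqrt{2}$ to $\sigma$ absorbs the polynomial prefactors that appear at the transition points $\gamma_i,\gamma_{i+1}$. The bound on $\partial_x\xi_i$ follows by differentiating \eqref{dxi} and repeating the same region-wise analysis. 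For $\partial_t\xi_i$, note that the $t$-dependence enters only through $\gamma_i(t)$ and $\gamma_{i+1}(t)$; since $t\gamma_j'(t)=O(1)$ uniformly, by \eqref{degama0} and \eqref{eq2.4}, multiplying by $|t|$ yields the same estimates.

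Third, for the symmetry about $x_0:=\tfrac{\gamma_i(t)+\gamma_{i+1}(t)}{2}$, I would use that $\omega$ is odd to compute
\[
f_i^-(t,2x_0-x)=\omega(\gamma_{i+1}-x)-1=-f_{i+1}^+(t,x),
\qquad
f_{i+1}^+(t,2x_0-x)=-f_i^-(t,x),
\]
so the product $f_i^-f_{i+1}^+$ is invariant under $x\mapsto 2x_0-x$, and hence both $\xi_i(t,x)$ and $\widetilde{\xi}_i(t,x):=\xi_i(t,2x_0-x)$ solve \eqref{xi'seq}. Their difference lies in the kernel $\mathrm{span}\{e^{\pm\sqrt{2}x}\}$, but the boundedness established in Step 2 forces this kernel element to vanish, giving the claimed symmetry.

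The main obstacle is the bookkeeping in Step 2: the double integral must simultaneously exhibit the $t^{-1/2}$ prefactor uniformly over all three regions and produce the sharp exponential decay $e^{-\sigma|x-\gamma_j|}$ away from $[\gamma_i,\gamma_{i+1}]$. Without the homogeneous correction (the second term in \eqref{dxi}), the particular solution constructed from the $e^{\sqrt{2}x}$-weighted Green's function would grow like $e^{\sqrt{2}x}$ on the left half-line; choosing the constant in front of $e^{\sqrt{2}(x-\gamma_{i+1})}$ to be exactly $12\int_0^{\exp((\gamma_i+\gamma_{i+1})/\sqrt{2})}\frac{y\,dy}{(y+e^{\sqrt{2}\gamma_i})(y+e^{\sqrt{2}\gamma_{i+1}})}$ is what calibrates the decay symmetrically on both sides, and matching this with the pointwise estimates is where most of the calculation lies.
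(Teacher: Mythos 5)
Your Steps 1 and 3 are sound, and you correctly identify both the source of the $t^{-1/2}$ factor (namely $e^{-\sqrt{2}(\gamma_{i+1}-\gamma_i)}\sim t^{-1/2}$ via \eqref{degama0}) and the reason for the loss from $\sqrt{2}$ to $\sigma$ (the forcing decays at exactly the characteristic rate, producing resonant factors $|x-\gamma_j|e^{-\sqrt{2}|x-\gamma_j|}$). The gap is in Step 2: you propose to insert pointwise bounds on $f_i^-f_{i+1}^+$ into \eqref{dxi} and estimate the double-integral term and the homogeneous correction separately, and this cannot yield the stated estimates for $x\ge x_0:=\tfrac{\gamma_i+\gamma_{i+1}}{2}$. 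Setting $\eta=\gamma_{i+1}-\gamma_i$ and substituting $y=e^{\sqrt{2}\tau}$, one finds that at $x=\gamma_{i+1}$ the double-integral term is $\approx -12e^{-\sqrt{2}\eta}\bigl(e^{\sqrt{2}\eta/2}-1\bigr)\approx-12e^{-\sqrt{2}\eta/2}$ while the homogeneous correction is $\approx+12e^{-\sqrt{2}\eta/2}$: each is of order $t^{-1/4}$, and only their sum is $O(t^{-1/2})$. Worse, as $x\to+\infty$ both terms individually grow like $e^{\sqrt{2}x}$, and the decay of $\xi_i$ there rests entirely on their exact leading-order cancellation; this is also why your remark that the correction prevents $e^{\sqrt{2}x}$ growth \emph{on the left half-line} points to the wrong side ($e^{\sqrt{2}x}$ is harmless as $x\to-\infty$). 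A term-by-term absolute-value estimate therefore loses a factor $t^{1/4}$ on $[x_0,\gamma_{i+1}]$ and fails outright on $x\ge\gamma_{i+1}$, and since the symmetry argument of your Step 3 presupposes the boundedness from Step 2, it cannot be used to repair this after the fact.

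The paper's proof sidesteps the cancellation by first integrating by parts in \eqref{dxi}, which recombines the two summands into the manifestly symmetric pair
$6e^{-\sqrt{2}x}\int_0^{e^{\sqrt{2}x}}\frac{ye^{\sqrt{2}\gamma_i}\,{\mathrm d}y}{(y+e^{\sqrt{2}\gamma_i})(y+e^{\sqrt{2}\gamma_{i+1}})}
+6e^{\sqrt{2}[x-\gamma_i-\gamma_{i+1}]}\int_0^{e^{\sqrt{2}[\gamma_i+\gamma_{i+1}-x]}}\frac{ye^{\sqrt{2}\gamma_i}\,{\mathrm d}y}{(y+e^{\sqrt{2}\gamma_i})(y+e^{\sqrt{2}\gamma_{i+1}})}$,
each term of which is individually $O(t^{-1/2})$ with the correct exponential decay, and then evaluates these integrals in closed form as combinations of $e^{\mp\sqrt{2}(x-\gamma_j)}\ln\bigl(1+e^{\pm\sqrt{2}(x-\gamma_j)}\bigr)$; the three estimates, the $\partial_t$ bound (via $t\gamma_j'=O(1)$, exactly as you say) and the symmetry about $x_0$ are all read off from that expression. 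To salvage your softer route you must either perform this recombination first, or replace \eqref{dxi} from the outset by the decaying Green's function representation $\xi_i(t,x)=-\frac{3}{\sqrt{2}}\int_{\mathbb R}e^{-\sqrt{2}|x-y|}f_i^-(t,y)f_{i+1}^+(t,y)\,{\mathrm d}y$ and verify that it coincides with \eqref{dxi}; with that in hand your region-wise estimates and the reflection-plus-uniqueness argument for the symmetry do go through.
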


\noindent \textbf{Step 2:} {\em The second correction}

Let us define
$$
z_2(t,x):=\sum_{j=1}^k(-1)^{j+1}\omega\big(x-\gamma_j(t)\big)
-\sum_{j=1}^{k-1}(-1)^{j+1}\xi_j(t,x)-1.
$$
When $x$ close to the point $x=\gamma_i(t)$ with $1\leq i\leq k$, using \eqref{asymptotic}, equations \eqref{eqq} and \eqref{xi'seq} together with Taylor expansion, we have that
\begin{align*}
&(-1)^{i+1}\big[\partial_tz_2(t,x)-F(z_2(t,x))\big]
\\[2mm]
&=(-1)^{i+1}\Big\{\partial_tz_2(t,x)
\ +\ \big[\partial_{xx}-W''(z_2(t,x))\big]\big[\partial_{xx}z_2(t,x)-W'(z_2(t,x))\big]\Big\}
\\[2mm]
&=\Big[\partial_{xx}-W''\big(z_2(t,x)\big)\Big]
\Bigg\{-W'\big(f^+_{i+1}(t,x)-1\big)+W'\big(\omega(x-\gamma_i)\big)-W'\big(f^-_{i-1}(t,x)+1\big)
\\[2mm]
&\qquad-W'\Big(\omega\big(x-\gamma_i(t)\big)-f^+_{i+1}(t,x)-f^-_{i-1}(t,x)+\xi_{i-1}(t,x)-\xi_i(t,x)+\widetilde{f}(t,x)\Big)
\\[2mm]
&\qquad+\partial_{xx}\xi_{i-1}(t,x)-\partial_{xx}\xi_{i}(t,x)
-\sum_{\substack{j=1,\\ j\neq i-1,i}}^{k-1}(-1)^{j+i}\partial_{xx}\xi_j(t,x)
\\[2mm]
&\qquad+\sum_{\substack{j=1,\\ j\neq i-1,i,i+1}}^k(-1)^{j+i}\omega''\big(x-\gamma_j(t)\big)
\Bigg\}
\\[2mm]
&\quad+\sum_{j=1}^k(-1)^{j+i}\gamma'_j(t)\omega'\big(x-\gamma_j(t)\big)+\sum_{j=1}^{k-1}(-1)^{j+i}\partial_t\xi_j(t,x)
\\[2mm]
&=\Big[\partial_{xx}-W''\big(z_2(t,x)\big)\Big]
\Bigg\{\underbrace{3\Big[1-\omega\big(x-\gamma_i(t)\big)\Big]^2f^+_{i+1}(t,x)
\ +\ 3\Big[\omega^2\big(x-\gamma_i(t)\big)-1\Big]\xi_i(t,x)}_{\widehat{\textrm{I}}_{1,i}(t,x)}
\\[2mm]
&\qquad+\underbrace{3\Big[1+\omega\big(x-\gamma_i(t)\big)\Big]^2f^-_{i-1}(t,x)
\ +\
3\Big[1-\omega^2\big(x-\gamma_i(t)\big)\Big]\xi_{i-1}(t)
}_{\widehat{\textrm{I}}_{2,i}(t,x)}
\ +\ O\Big(f^-_{i-1}f^+_{i+1}\Big)
\\[2mm]
&\qquad+3\Big[1-\omega\big(x-\gamma_i(t)\big)\Big]\left[f^+_{i+1}(t,x)\right]^2
\ -\
3\Big[1+\omega\big(x-\gamma_i(t)\big)\Big]\left[f^-_{i-1}(t,x)\right]^2
\\[2mm]
&\qquad+O\Big(\xi^2_i(t,x)+\xi^2_{i-1}(t,x)\Big)
+O\Bigg(\sum_{l=i-1,i}\big[f_{i-1}^-(t,x)+f_{i+1}^+(t,x)\big]\xi_l(t,x)\Bigg)
\\[2mm]
&\qquad+O\big(\widetilde{f}(t,x)\big)+\sum_{\substack{j=1,\\ j\neq i-1,i,i+1}}^k(-1)^{j+i}W'\big(\omega\big(x-\gamma_j(t)\big)\big)-\sum_{\substack{j=1,\\ j\neq i-1,i}}^{k-1}(-1)^{j+i}\partial_{xx}\xi_j(t,x)\Bigg\}
\\[2mm]
&\quad+\sum_{j=1}^{k-1}(-1)^{j+i}\partial_t\xi_j(t,x)+\sum_{j=1}^k(-1)^{j+i}\gamma'_j(t)\omega'\big(x-\gamma_j(t)\big),
\end{align*}
where the function $\widetilde{f}$ is given by
\begin{align*}
\widetilde{f}(t,x):=&\sum_{j=1}^{i-2}(-1)^{j+i}\bigg[\omega\big(x-\gamma_j(t)\big)-1\bigg]
\ +\
\sum_{j=i+2}^{k}(-1)^{j+i}\bigg[\omega\big(x-\gamma_j(t)\big)+1\bigg]
\\&
-\sum_{\substack{j=1,\\ j\neq i-1,i}}^{k-1}(-1)^{j+i}\xi_j(t,x).
\end{align*}
Notice that $\widehat{\textrm{I}}_{i,1}(t,x)=O\big(t^{-\frac{1}{2}}\big)$ and  $\widehat{\textrm{I}}_{i,2}(t,x)=O\big(t^{-\frac{1}{2}}\big)$  when $x$ near to $\gamma_i(t)$.

We will get rid of the terms $(-1)^{i+1}\big[\,\widehat{\textrm{I}}_{i,1}(t,x)+\widehat{\textrm{I}}_{i,2}(t,x)\big]$
with $i=1,\cdots,k$ in the above equation to improve the approximation.
Inspired by \cite{mm,r1}, we define the modifying functions as the following
\begin{equation}\label{dxi1}
  \widetilde{\xi}_i(t,x):=\omega'(x)\int_0^x\big(\omega'(s)\big)^{-2}\,{\mathrm d}s\int_{-\infty}^s\widetilde{\textrm{I}}_{i}(t,\tau)\omega'(\tau)\mathrm{d}\tau,
\end{equation}
for all $i=1,\cdots,k$, which satisfy
\begin{equation}\label{xi1eq}
  \big[\partial_{xx}-W''(\omega(x))\big] \widetilde{\xi}_i(t,x)=\widetilde{\textrm{I}}_i(t,x),
\end{equation}
where the terms $\widetilde{\textrm{I}}_i(t,x)$ with $ i=1,\cdots,k$, are the projection of $\widehat{\textrm{I}}_{i,1}\big(t,x+\gamma_i(t)\big)+\widehat{\textrm{I}}_{i,2}\big(t,x+\gamma_i(t)\big)$ on the space
\begin{equation}\label{dpots}
\textrm{K}:=\left\{\mathfrak{f}\in L^2({\mathbb R}):\int_{\mathbb R}\omega'(x)\mathfrak{f}(x)\mathrm{d}x=0\right\},
\end{equation}
More precisely, the functions $\widetilde{\textrm{I}}_i$ are defined as
\begin{equation}\label{dIi}
  \widetilde{\textrm{I}}_i(t,x):=\widehat{\textrm{I}}_{1,i}\big(t,x+\gamma_i(t)\big)+\widehat{\textrm{I}}_{2,i}\big(t,x+\gamma_i(t)\big)-d_i(t)\omega'(x),
\end{equation}
where the functions $d_i(t)$ are given by
\begin{equation}\label{ddt}
\ d_i(t):=\frac{\displaystyle\int_{\mathbb R} \Big[\widehat{\textrm{I}}_{1,i}\big(t,x+\gamma_i(t)\big)+\widehat{\textrm{I}}_{2,i}\big(t,x+\gamma_i(t)\big)\Big]\omega'(x)\mathrm{d}x}{\int_{\mathbb R}\big(\omega'(x)\big)^2\mathrm{d}x}.
\end{equation}
The estimates of the functions $\widetilde{\xi}_j(t,x)$ and $d_j(t)$ are given by the following lemma. For its proof, see Appendix \ref{section6.2}.
\begin{lem}\label{lem33}
Let $1\leq i\leq k$. There hold that
\begin{align*}
\left\{
\begin{array}{l}
  \abs{\widetilde{\xi}_i(t,x)}
  \,+\,\abs{\partial_x\widetilde{\xi}_i(t,x)}
  \,+\,\abs{t}\abs{\partial_t\widetilde{\xi}_i(t,x)}
  \,\leq\, Ct^{-\frac{1}{2}}\abs{x}e^{-\sqrt{2}\abs{x}},
  \\[4mm]
   d_i(t)=12\sqrt{2}\left\{e^{-\sqrt{2}\eta_i(t)}-e^{-\sqrt{2}\eta_{i+1}(t)}\right\} \,+\, O\left(t^{-1}\ln t\right),
  \end{array}
\right.
\end{align*}
where the functions $\eta_j$ are defined by
\begin{align}
\begin{aligned}
\eta_j(t):=\gamma_j(t)-\gamma_{j-1}(t),\quad \forall\, j=2,\cdots,k,
\\
\eta_1(t)=\eta_{k+1}(t):=+\infty.
\end{aligned}
\label{eta}
\end{align}
Moreover, the constant $C>0$ does not depend on $t$ and $x$.
\qed
\end{lem}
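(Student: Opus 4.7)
\noindent\textbf{Proof plan for Lemma \ref{lem33}.}
The strategy separates the two claims. For the asymptotic formula for $d_i(t)$ I would compute directly from the definition \eqref{ddt}, while for the pointwise bound on $\widetilde{\xi}_i(t,x)$ I would exploit the variation-of-parameters structure of the representation \eqref{dxi1} together with the orthogonality $\widetilde{\textrm{I}}_i\in\textrm{K}$ built into \eqref{dIi}.

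For $d_i(t)$: after translating by $\gamma_i(t)$, I insert $\widehat{\textrm{I}}_{1,i}$ and $\widehat{\textrm{I}}_{2,i}$ into the numerator of \eqref{ddt}. Using \eqref{asymptotic}, I replace $1+\omega(x-\eta_{i+1}(t))$ by $2e^{\sqrt{2}(x-\eta_{i+1}(t))}$ and $\omega(x+\eta_i(t))-1$ by $-2e^{-\sqrt{2}(x+\eta_i(t))}$, so the leading integrals take the form $\int_{\R}[1\pm\omega(x)]^2 e^{\pm\sqrt{2}x}\omega'(x)\,\mathrm{d}x$. Both are evaluated through the substitution $y=\omega(x)$, which turns $e^{\pm\sqrt{2}x}$ into $(1\pm y)/(1\mp y)$ and $\omega'(x)\,\mathrm{d}x$ into $\mathrm{d}y$; the resulting $\int_{-1}^{1}(1\mp y^2)\,\mathrm{d}y$ equals $4/3$. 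Combined with the denominator $\int_{\R}(\omega'(x))^{2}\,\mathrm{d}x = 2\sqrt{2}/3$ (computed identically), this produces the announced leading term $12\sqrt{2}\{e^{-\sqrt{2}\eta_i(t)}-e^{-\sqrt{2}\eta_{i+1}(t)}\}$. Every remaining contribution --- the next-order pieces of \eqref{asymptotic}, the cross products of the type $f^-_{i-1}f^+_{i+1}$, and the terms involving $(\omega^2-1)\xi_{i-1}$, $(\omega^2-1)\xi_i$ --- has to be absorbed into the $O(t^{-1}\ln t)$ remainder by combining Lemma \ref{esxi} with the separation $\eta_j(t)\sim(\ln t)/(2\sqrt{2})$ produced by \eqref{degama0}.

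For the pointwise estimate on $\widetilde{\xi}_i(t,x)$ I set
\begin{equation*}
G_i(t,s):=\int_{-\infty}^{s}\widetilde{\textrm{I}}_i(t,\tau)\omega'(\tau)\,\mathrm{d}\tau, \qquad \widetilde{\xi}_i(t,x)=\omega'(x)\int_0^x\frac{G_i(t,s)}{(\omega'(s))^{2}}\,\mathrm{d}s.
\end{equation*}
The orthogonality enforced by \eqref{dIi}-\eqref{ddt} gives $G_i(t,+\infty)=0$, so $G_i$ decays exponentially at \emph{both} ends. From \eqref{asymptotic} together with Lemma \ref{esxi} I obtain the pointwise bound $|\widetilde{\textrm{I}}_i(t,\tau)\omega'(\tau)|\leq Ct^{-1/2}e^{-2\sqrt{2}|\tau|}$, hence $|G_i(t,s)|\leq Ct^{-1/2}e^{-2\sqrt{2}|s|}$. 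Since $(\omega'(s))^{-2}$ grows exactly like $e^{2\sqrt{2}|s|}$, the integrand $(\omega'(s))^{-2}G_i(t,s)$ is uniformly bounded by $Ct^{-1/2}$; the $s$-integral is therefore at most $Ct^{-1/2}|x|$, and multiplication by $\omega'(x)=O(e^{-\sqrt{2}|x|})$ yields the desired bound $Ct^{-1/2}|x|e^{-\sqrt{2}|x|}$. The $\partial_x$ estimate is immediate by the product rule applied to the same representation. For $\partial_t$, I differentiate $\widetilde{\textrm{I}}_i$ inside the integrals and use Lemma \ref{esxi}'s bounds on $|\partial_t\xi_{i-1}|,|\partial_t\xi_i|$ together with $\partial_t\omega(\cdot-\gamma_j(t))=-\gamma'_j(t)\omega'(\cdot-\gamma_j(t))$ and $|\gamma'_j(t)|\leq C/t$ (from \eqref{degama0}-\eqref{eq2.4}) to produce the required factor $|t|^{-1}$.

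The main technical obstacle is the $d_i(t)$ expansion: the functions $\xi_{i-1},\xi_i$ are only $O(t^{-1/2})$ by Lemma \ref{esxi}, so a naive estimate of the piece $3(1-\omega^2(x))\xi_{i\mp1}(t,x+\gamma_i(t))$ paired against $\omega'(x)$ would give only $O(t^{-1/2})$ and leak into the leading term. One has to use the explicit integral structure of $\xi_j$ in \eqref{dxi} (and not merely its pointwise size) to detect the cancellations that reduce this contribution to $O(t^{-1}\ln t)$, which is exactly the order at which the formal derivation of the Toda system \eqref{toda11} is expected to close.
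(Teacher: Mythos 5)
Your argument for the pointwise bound on $\widetilde{\xi}_i$ is essentially the paper's own: use the orthogonality $\int_{\R}\widetilde{\textrm{I}}_i\omega'\,\mathrm{d}\tau=0$ to replace $\int_{-\infty}^{s}$ by $-\int_{s}^{+\infty}$ on the appropriate half-line, bound the inner integral by $Ct^{-1/2}e^{-2\sqrt{2}\abs{s}}$, cancel this against $(\omega'(s))^{-2}$, and collect the factor $\abs{x}e^{-\sqrt{2}\abs{x}}$ from the outer integration and the prefactor $\omega'(x)$; the $\partial_x$ and $\partial_t$ bounds follow as you say. That half of the lemma is fine.

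The expansion of $d_i(t)$ contains a genuine error. You claim the terms $3(\omega^2-1)\xi_i$ and $3(1-\omega^2)\xi_{i-1}$ inside $\widehat{\textrm{I}}_{1,i}+\widehat{\textrm{I}}_{2,i}$ are pushed into the $O(t^{-1}\ln t)$ remainder by ``cancellations.'' There is no such cancellation: using the closed form \eqref{exi} of $\xi_j$ one computes
\begin{equation*}
\int_{\R}\big[1-\omega^2(x)\big]\,\xi_i\big(t,x+\gamma_i(t)\big)\,\omega'(x)\,\mathrm{d}x
=8e^{-\sqrt{2}\eta_{i+1}(t)}+O\big(\eta_{i+1}(t)\,e^{-2\sqrt{2}\eta_{i+1}(t)}\big),
\end{equation*}
which is of the \emph{same} order $t^{-1/2}$ as the term you retain, and three times as large: after multiplying by the coefficient $3$ from \eqref{dIi} and dividing by $\int_{\R}(\omega')^2=2\sqrt{2}/3$, it contributes $-18\sqrt{2}\,e^{-\sqrt{2}\eta_{i+1}}$ to $d_i$, against the $+6\sqrt{2}\,e^{-\sqrt{2}\eta_{i+1}}$ that your computation $\int_{-1}^{1}(1-y^2)\,\mathrm{d}y=4/3$ produces from $3(1-\omega)^2f^+_{i+1}$ (note also that $3\cdot 2\cdot\tfrac{4}{3}\cdot\tfrac{3}{2\sqrt{2}}=6\sqrt{2}$, not $12\sqrt{2}$, so your leading term does not match the announced one even before signs). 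Keeping only your integrals gives $d_i(t)\approx-6\sqrt{2}\{e^{-\sqrt{2}\eta_i}-e^{-\sqrt{2}\eta_{i+1}}\}$ — wrong sign and half the magnitude — whereas the correct coefficient arises precisely from the sum $6\sqrt{2}-18\sqrt{2}=-12\sqrt{2}$. The paper's proof therefore evaluates all four integrals explicitly: the two kink--kink integrals by first integrating by parts to put the derivative on the translated kink, $\tfrac13\int_{\R}\omega'(x\mp\eta)\,[1\mp\omega(x)]^3\,\mathrm{d}x=\tfrac83e^{-\sqrt{2}\eta}+O(e^{-2\sqrt{2}\eta})$, and the two $\xi$-integrals by the substitution $y=e^{\sqrt{2}x}$ applied to \eqref{exi}. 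As written, your plan cannot produce the constant $12\sqrt{2}$, and since this constant feeds directly into the Toda system \eqref{toda11}, the gap is not cosmetic.
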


 According to the above arguments,  we now define an approximate solution of problem \eqref{eq2.1.1} as the following
\begin{align}
  z(t,x):=&\sum_{j=1}^k(-1)^{j+1}\omega\big(x-\gamma_j(t)\big)-\sum_{j=1}^{k-1}(-1)^{j+1}\xi_j(t,x)
\nonumber\\[2mm]
&-\sum_{j=1}^k(-1)^{j+1}\widehat{\xi}_j(t,x)-\frac{1}{2}\left[1+(-1)^k\right],
\label{dzz}
\end{align}
where the functions  $\widehat{\xi}_j(t,x)$ with $j=1,\cdots,k$,  are defined as
\begin{equation}\label{dhatxi}
  \widehat{\xi}_j(t,x):=\widetilde{\xi}_j\big(t,x-\gamma_j(t)\big).
\end{equation}
Note that the functions $\xi_i(t,x)$ and $\widetilde{\xi}_j(t,x)$ are given by \eqref{dxi} and \eqref{dxi1} respectively..

\subsubsection{The nonlinear projected problem}\label{section2.2.2}
We want to find a solution to equation (\ref{eq2.1.1}) of the form
\begin{equation*}
  u(t,x)=z(t,x)+\phi(t,x).
\end{equation*}
Thus the function $\phi(t,x)$ satisfies the following equation:
\begin{equation}\label{eq7.4}
\begin{aligned}
\phi_t=F'(z(t,x))[\phi]+E(t,x)+N(\phi)\qquad \text{in}\
[T,+\infty)\times{\mathbb R},
\end{aligned}
\end{equation}
where error term $E(t,x)$ is given by
\begin{equation}\label{eq7.5}
\begin{aligned}
  E(t,x):=-\partial_tz(t,x)+F(z(t,x)).
\end{aligned}
\end{equation}
The nonlinear term $N(\phi)$ is defined by
\begin{equation}\label{eq7.6}
N(\phi):=F\big(z(t,x)+\phi(t,x)\big)-F(z(t,x))-F'(z(t,x))[\phi(t,x)],
\end{equation}
where the operator $F(u)$ is given by \eqref{deF}.

To solve equation (\ref{eq7.4}), we first consider the projection problem:
\begin{equation}\label{eq7.7}
\begin{aligned}
\phi_t=F'(z(t,x))[\phi]+E(t,x)+N(\phi)-\sum\limits_{j=1}^kc_j(t)\omega'\big(x-\gamma_j(t)\big) \qquad   \text{in}\
[T,+\infty)\times{\mathbb R},
 \end{aligned}
\end{equation}
and
\begin{equation}\label{eq7.8}
  \int_{{\mathbb R}} \phi(t,x)\omega'\big(x-\gamma_j(t)\big)\mathrm{d}x=0 \qquad  \text{for all}\ j=1,\cdots,k\ \text{and}\ t> T.
\end{equation}
Here the functions $c_j(t)$ are chosen so that $\phi(t,x)$ satisfies the orthogonality condition (\ref{eq7.8}), namely the following (nearly
diagonal) system holds:
\begin{equation}\label{eq7.9}
  \begin{aligned}
&\sum_{j=1}^kc_j(t)\int_{{\mathbb R}}\omega'\big(x-\gamma_j(t)\big)\omega'\big(x-\gamma_i(t)\big)\mathrm{d}x
  \\
  &=\int_{{\mathbb R}} F'(z(t,x))[\phi]\omega'\big(x-\gamma_i(t)\big)\mathrm{d}x
  -\gamma'_i(t)\int_{0}^\infty \phi(t,r)\omega''\big(x-\gamma_i(t)\big)\mathrm{d}x
 \\
 &  \ \ \ +\int_{{\mathbb R}}\big[E(t,x)+N(\phi)\big]\omega'\big(x-\gamma_i(t)\big)\mathrm{d}x,
  \ \ \ \ \forall\, i=1,\cdots,k\ \text{and}\ t>T.
  \end{aligned}
\end{equation}
The existence of solutions to \eqref{eq7.7}-\eqref{eq7.8} will be proved in Sections \ref{sec:lp} and \ref{sec:nl}.
We shall choose suitable $h_j(t)$ with $j=1,\cdots,k$ such that all $c_j(t)$ equal to zero in Section \ref{sec:tc},
which means that $z(t,x)+\phi(t,x)$ will exactly satisfy problem \eqref{eq2.1.1}.

\subsubsection{The estimates of the error terms}
At the end of this section, we  establish some estimates of the error term $E(t,x)$ in (\ref{eq7.5}) which are stated by the following lemma.
\begin{lem}\label{lem10}
Let $\sigma\in\big(0,\sqrt{2}\big)$, $\alpha>1$ and $T>1$. We set
\begin{equation}
\begin{aligned}\label{dphi}
\Phi(t,x):=\left\{
\begin{array}{l}
 \frac{1}{t^{\frac{3}{4}+\frac{\sigma}{8\sqrt{2}}}}\left[\frac{1}{\big(\abs{x-\gamma_0^0}+1\big)^\alpha}+\frac{1}{\big(\abs{x-\gamma_2^0}+1\big)^\alpha}\right]\ \ \ \ \qquad \text{if}\ x\leq \frac{\gamma^0_1(t)+\gamma^0_2(t)}{2},\\[6mm]
 \frac{1}{t^{\frac{3}{4}+\frac{\sigma}{8\sqrt{2}}}}\left[\frac{1}{\big(\abs{x-\gamma_{j-1}^0}+1\big)^\alpha}+\frac{1}{\big(\abs{x-\gamma_{j+1}^0}+1\big)^\alpha}\right]\quad \quad\text{if}\ \frac{\gamma^0_{j-1}(t)+\gamma^0_{j}(t)}{2}\leq x\leq \frac{\gamma^0_j(t)+\gamma^0_{j+1}(t)}{2},\\[2mm] \hspace{7.8cm} \text{with }\ j=2,\cdots,k-1,
 \\[4mm]
 \frac{1}{t^{\frac{3}{4}+\frac{\sigma}{8\sqrt{2}}}}\left[\frac{1}{\big(\abs{x-\gamma_{k-1}^0}+1\big)^\alpha}+\frac{1}{\big(\abs{x-\gamma_{k+1}^0}+1\big)^\alpha}\right] \qquad \text{if}\ x\geq\frac{\gamma^0_{k-1}(t)+\gamma^0_k(t)}{2},
  \end{array}
\right.
\end{aligned}\end{equation}
with the functions $\gamma^0_j(t)$ for $1\leq j\leq k$ in \eqref{degama0}, and the conventions $\gamma_0^0=-\infty$, $\gamma_{k+1}^0=+\infty$.
Then there exists a positive constant $C$ independent of $t$ and $x$ such that
\begin{equation*}
  \abs{E(t,x)}\leq Ct^{-\frac{\sigma}{16\sqrt{2}}}\Phi(t,x)\qquad \text{for all} \ x\in{\mathbb R}\ \text{and}\ t>T,
\end{equation*}
where the error term $E(t,x)$ is given by $(\ref{eq7.5})$.
\end{lem}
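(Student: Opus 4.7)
The plan is to exploit the structural identity $E(t,x)=-\partial_t z(t,x)+F(z(t,x))$ together with the cancellations deliberately built into $z$ via the corrections $\xi_j$ and $\widehat{\xi}_j$. I would partition ${\mathbb R}$ into the $k+1$ regions induced by the midpoints $(\gamma_j^0+\gamma_{j+1}^0)/2$, matching the piecewise definition of $\Phi$, and estimate $E$ region by region.

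In each region I would rerun the Taylor expansions already carried out in Steps 1 and 2 of Subsection \ref{section2.2.1}, but now for the full approximation $z$ in \eqref{dzz}. By \eqref{xi'seq}, $\xi_i$ exactly cancels the quadratic interaction $6 f_i^- f_{i+1}^+$ that appears between $\gamma_i$ and $\gamma_{i+1}$; by \eqref{xi1eq}, $\widehat{\xi}_i$ cancels the quadratic residuals $\widehat{\textrm{I}}_{1,i}+\widehat{\textrm{I}}_{2,i}$ near $\gamma_i$ up to the $\omega'$-projection $d_i(t)\omega'(x-\gamma_i)$. This projection part is harmless for $E$: the correction $\widehat{\xi}_i$ enters $\partial_{xx}z-W'(z)$ through one factor of $[\partial_{xx}-W''(\omega)]$, and because $F(z)=-[\partial_{xx}-W''(z)](\partial_{xx}z-W'(z))$ carries a further factor of the same operator, the second application annihilates $\omega'$ by the identity $[\partial_{xx}-W''(\omega)]\omega'\equiv 0$ obtained on differentiating \eqref{eqq}. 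This structural cancellation is precisely the reason the $O(t^{-1/2})$ term in Lemma \ref{lem33} does not obstruct the desired bound on $E$.

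What remains in each region is a sum of cubic-and-higher interactions in $f_i^\pm,\xi_j,\widehat{\xi}_j$, of far-interface tail contributions $\omega(x-\gamma_l)\mp 1$ with $|l-i|\geq 2$, and of the time-derivative terms $\gamma_j'(t)\omega'(x-\gamma_j)$, $\partial_t\xi_j$, $\partial_t\widehat{\xi}_j$. Using \eqref{asymptotic}, the pointwise bounds of Lemmas \ref{esxi} and \ref{lem33}, the rigid spacing $\eta_j^0=\frac{1}{2\sqrt{2}}\ln(1152t)+O(1)$ from \eqref{degama0}, and the smallness of $h_j$ and $th_j'$ from \eqref{eq2.4}, I would show that each such contribution admits a pointwise bound of the form $Ct^{-3/4}\,e^{-\sigma|x-\gamma_l|}$, where $\gamma_l\in\{\gamma_{j-1},\gamma_{j+1}\}$ is a neighbouring transition point that carries the decaying tail responsible for the smallness; the $t^{-3/4}$ arises from evaluating a cubic product of $e^{-\sqrt{2}\eta/2}$-type factors at the spacing scale $\eta\sim\frac{1}{2\sqrt{2}}\ln t$.

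To convert this into the polynomial weight of $\Phi$, I would split the exponential as $e^{-\sigma|x-\gamma_l|}=e^{-3\sigma|x-\gamma_l|/4}\cdot e^{-\sigma|x-\gamma_l|/4}$. Because $x$ lies in the region centered at $\gamma_j$ with $l\in\{j-1,j+1\}$, one has $|x-\gamma_l|\geq\frac{1}{4\sqrt{2}}\ln t+O(1)$, so the first factor is bounded by $t^{-3\sigma/(16\sqrt{2})}=t^{-\sigma/(16\sqrt{2})}\cdot t^{-\sigma/(8\sqrt{2})}$, while the second factor is controlled by $C_\alpha(1+|x-\gamma_l^0|)^{-\alpha}$ using $e^{-\sigma|y|/4}\leq C_\alpha(1+|y|)^{-\alpha}$ together with $|\gamma_l-\gamma_l^0|\leq 1$. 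Multiplying by $t^{-3/4}$ delivers precisely $Ct^{-\sigma/(16\sqrt{2})}\Phi(t,x)$. The main obstacle I anticipate is the extensive bookkeeping of all cross-terms across the $k+1$ regions, and consistently matching each residual's dominant tail interface to one of the two polynomial weights $(1+|x-\gamma_{j\pm1}^0|)^{-\alpha}$ appearing in the piecewise definition of $\Phi$.
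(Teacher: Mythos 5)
Your route is structurally the same as the paper's: the same partition of ${\mathbb R}$ by the midpoints of consecutive $\gamma^0_j$, the same reliance on the cancellations built into $z$ through \eqref{xi'seq} and \eqref{xi1eq}, the same observation that the leftover projection $d_i(t)\omega'(x-\gamma_i)$ inside $\partial_{xx}z-W'(z)$ is annihilated (up to the small commutator $[W''(z)-W''(\omega(x-\gamma_i))]d_i\omega'$) by the second application of the operator, and the same conversion of spatial decay into the polynomial weight of $\Phi$. The gap is in your intermediate pointwise claim that every residual obeys $Ct^{-3/4}e^{-\sigma|x-\gamma_l|}$ with $l\in\{j-1,j+1\}$. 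This is false on the central part of each region for $\sigma$ in the upper half of the admissible range: the contributions anchored at $\gamma_j$ itself --- e.g.\ $\gamma_j'(t)\omega'(x-\gamma_j)$ from $-\partial_t z$, $\partial_t\widehat{\xi}_j$, and two-sided products such as $\omega(x-\gamma_j)f^-_{j-1}\xi_{j-1}$ --- are of size comparable to $t^{-1}$ at $x=\gamma_j(t)$ and decay away from $\gamma_j$, not away from $\gamma_{j\pm1}$. Since $|\gamma_j-\gamma_{j\pm1}|=\tfrac{1}{2\sqrt2}\ln(1152t)+O(1)$, your bound evaluated at $x=\gamma_j$ reads $Ct^{-3/4-\sigma/(2\sqrt2)}$, and $3/4+\sigma/(2\sqrt2)>1$ precisely when $\sigma>\sqrt2/2$; no choice of $l$ repairs this, so the written argument breaks for $\sigma\in(\sqrt2/2,\sqrt2)$.

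The repair is exactly the paper's device: on the bounded middle set $\big[\tfrac{\gamma^0_1+\gamma^0_2}{2},\tfrac{\gamma^0_{k-1}+\gamma^0_k}{2}\big]$ one does not track any $x$-decay, but proves only the uniform bound $|E|\leq Ct^{-3/4-\sigma/(4\sqrt2)}$ (this is \eqref{estmE} plus the $O(t^{-1})$ time-derivative terms) and compares it with the lower bound $\Phi\geq t^{-3/4-\sigma/(8\sqrt2)}\big[\ln t/(2\sqrt2)\big]^{-\alpha}$, valid there because $|x-\gamma^0_{j\pm1}|\leq\tfrac32\eta+O(1)$; the quotient $t^{-\sigma/(8\sqrt2)}(\ln t)^{\alpha}\leq Ct^{-\sigma/(16\sqrt2)}$ closes the estimate. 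Your exponential-to-polynomial splitting is then needed, and works, only on the two unbounded outer regions. One further caution on the uniform bound itself: it is not obtained term by term. Individual pieces such as $3[f^-_{j-1}]^2f^+_j$ and $-3[f^-_{j-1}]^2$ are of sizes $t^{-3/4}$ and $t^{-1/2}$ at the midpoint, and only the exact algebraic cancellations recorded in \eqref{esee} (for instance $-3[f^-_{j-1}]^2-3\omega(x-\gamma_j)[f^-_{j-1}]^2=-3[f^-_{j-1}]^2f^+_j$, which in turn cancels the explicit cubic term) bring the sum below $t^{-3/4}$; so ``rerunning the Taylor expansions of Steps 1 and 2'' must be carried one order further than those steps display, which is the main labour of the paper's proof.
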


\begin{proof}
We first notice that
$${\mathbb R}=\bigcup_{j=1}^k\left[\frac{\gamma_j^0(t)+\gamma_{j-1}^0(t)}{2},\frac{\gamma^0_j(t)+\gamma^0_{j+1}(t)}{2}\right],$$
the functions $\gamma^0_i(t)$ with $1\leq i\leq k$ are defined in \eqref{degama0}, $\gamma^0_0=-\infty$ and $\gamma_{k+1}^0=+\infty$.
Thus there exist the following two cases.

\textbf{Case one:} {\em when $x\in\left[\frac{\gamma^0_{i-1}(t)+\gamma^0_i(t)}{2},\frac{\gamma^0_i(t)+\gamma^0_{i+1}(t)}{2}\right]$ with $i=2,\cdots,k-1$}

By the definitions in \eqref{deF}, \eqref{eq7.5} and \eqref{dzz}, we have that
  \begin{align}\label{eq3}
  \nonumber&E(t,x)=-\partial_tz(t,x)+F(z(t,x))
  \\[2mm]&\nonumber=\sum_{j=1}^{k}(-1)^{j+1}\gamma'_j(t)\omega'\big(x-\gamma_j(t)\big)
      +\sum_{j=1}^{k-1}(-1)^{j+1}\partial_t\xi_j(t,x)+\sum_{j=1}^k(-1)^{j+1}\partial_t\widehat{\xi}_j(t,x)
\\[2mm]&\nonumber\quad -\Big[\partial_{xx}-W''(z(t,x))\Big]\Big[\partial_{xx}z(t,x)-W'(z(t,x))\Big]
\\[2mm]&=-\Big[\partial_{xx}-W''(z(t,x))\Big]\Big[\partial_{xx}z(t,x)-W'(z(t,x))\Big]+O\left(t^{-1}\right),
  \end{align}
  where we have used  \eqref{degama0}, \eqref{eq2.4} and Lemmas \ref{esxi}-\ref{lem33} in the last equality.

Thus we need to study the first term in the above. Firstly,  we define the function
\begin{align*}
  \widehat{f}(t,x):=&\sum_{j=1}^{i-3}(-1)^{j+i}\bigg[\omega\big(x-\gamma_j(t)\big)-1\bigg]+\sum_{j=i+3}^{k}(-1)^{j+i}\bigg[\omega\big(x-\gamma_j(t)\big)+1\bigg]
  \\&-\sum_{j=1}^{i-3}(-1)^{i+j}\xi_j(t,x)-\sum_{j=i+2}^{k-1}(-1)^{i+j}\xi_j(t,x)
     -\sum_{\substack{j=1,\\ j\neq i-1,i,i+1}}(-1)^{i+j}\widehat{\xi}(t,x).
  \end{align*}
By the Taylor expansion and the equations \eqref{eqq},  \eqref{xi'seq} and \eqref{xi1eq}, we have that
\begin{align}
\nonumber&(-1)^{i+1}\Big[\partial_{xx}z(t,x)-W'\big(z(t,x)\big)\Big]
\\[2mm]
&\nonumber=\omega''\big(x-\gamma_i(t)\big)+\sum_{j=i-2}^{i-1}(-1)^{j+i}\partial_{xx}f^-_{j}(t,x)+\sum_{j=i+1}^{i+2}(-1)^{j+i}\partial_{xx}f^+_{j}(t,x)+\partial_{xx}\widehat{f}(t,x)
      \\[2mm]
  &\nonumber\quad-\sum_{j=i-2}^{i+1}(-1)^{j+i}\partial_{xx}\xi_j(t,x)-\sum_{j=i-1}^{i+1}(-1)^{j+i}\partial_{xx}\widehat{\xi}_j(t,x)-W'\left((-1)^{i+1}z(t,x)\right)
  \\[2mm]
&\nonumber=W'\big(f^-_{i-2}(t,x)+1\big)-W'\big(f^-_{i-1}(t,x)+1\big)-W'\big(f^+_{i+1}(t,x)-1\big)+W'\big(f^+_{i+2}(t,x)-1\big)
      \\[2mm]
      &\nonumber\quad-\sum_{j=i-2}^{i+1}(-1)^{j+i}\left[2\xi_j(t,x)+6f_j^-(t,x)f_{j+1}^+(t,x)\right]
      \\[2mm]
      &\nonumber\quad-\sum_{j=i-1}^{i+1}(-1)^{j+i}\partial_{xx}\widehat{\xi}_j(t,x)+\partial_{xx}\widehat{f}(t,x)
      \\[2mm]
      &\nonumber\quad-W''\big(\omega\big(x-\gamma_i(t)\big)\big)\Bigg\{\sum_{j=i-2}^{i-1}(-1)^{j+i}f^-_{j}(t,x)+\sum_{j=i+1}^{i+2}(-1)^{j+i}f^+_{j}(t,x)+\widehat{f}(t,x)
       \\[2mm]
       &\nonumber\qquad\qquad-\sum_{j=i-2}^{i+1}(-1)^{j+i}\xi_j(t,x)
       -\sum_{j=i-1}^{i+1}(-1)^{j+i}\widehat{\xi}_j(t,x)\Bigg\}
       \\[2mm]
       &\nonumber\quad-3\omega\big(x-\gamma_i(t)\big)\Bigg\{\sum_{j=i-2}^{i-1}(-1)^{j+i}f^-_{j}(t,x)
       +\sum_{j=i+1}^{i+2}(-1)^{j+i}f^+_{j}(t,x) \ +\ \widehat{f}(t,x)
       \\[2mm]
       &\nonumber\qquad\qquad-\sum_{j=i-2}^{i+1}(-1)^{j+i}\xi_j(t,x)
       -\sum_{j=i-1}^{i+1}(-1)^{j+i}\widehat{\xi}_j(t,x)\Bigg\}^2
       \\[2mm]
       &\nonumber\quad-\Bigg\{\sum_{j=i-2}^{i-1}(-1)^{j+i}f^-_{j}(t,x)+\sum_{j=i+1}^{i+2}(-1)^{j+i}f^+_{j}(t,x)+\widehat{f}(t,x)
       \\[2mm]
       &\nonumber\qquad\qquad-\sum_{j=i-2}^{i+1}(-1)^{j+i}\xi_j(t,x)
       -\sum_{j=i-1}^{i+1}(-1)^{j+i}\widehat{\xi}_j(t,x)\Bigg\}^3.
\end{align}
By the notation
\begin{align}
d_i(t)\omega'\big(x-\gamma_i(t)\big)
=&-\partial_{xx}\widehat{\xi}_i(t,x)+W''\big(\omega\big(x-\gamma_i(t)\big)\big)\widehat{\xi}_i(t,x)
\nonumber\\[2mm]
&+3\left[1-\big[\omega\big(x-\gamma_i(t)\big)\big]^2\right]\xi_{i-1}(t,x)
+3\big[f_i^+\big]^2f_{i-1}^-
\nonumber\\[2mm]
&-3\left[1-\big[\omega\big(x-\gamma_i(t)\big)\big]^2\right]\xi_{i}(t,x)
+3\big[f_i^-\big]^2f_{i+1}^+,
\end{align}
and
\begin{align}
\hbar(t, x)=&\sum_{j=i-2}^{i-1}(-1)^{j+i}f^-_{j}(t,x)+\sum_{j=i+1}^{i+2}(-1)^{j+i}f^+_{j}(t,x)+\widehat{f}(t,x)
       \nonumber\\[2mm]
       &-\sum_{j=i-2}^{i+1}(-1)^{j+i}\xi_j(t,x)-\sum_{j=i-1}^{i+1}(-1)^{j+i}\widehat{\xi}_j(t,x),
\end{align}
we further obtain that
\begin{align}
\nonumber&(-1)^{i+1}\Big[\partial_{xx}z(t,x)-W'\big(z(t,x)\big)\Big]
\\[2mm]
&\nonumber= d_i(t)\omega'\big(x-\gamma_i(t)\big) +\partial_{xx}\widehat{\xi}_{i-1}(t,x)-W''\big(\omega\big(x-\gamma_i(t)\big)\big)\widehat{\xi}_{i-1}(t,x)
     \\[2mm]
       &\nonumber\quad
       +\partial_{xx}\widehat{\xi}_{i+1}(t,x)-W''\big(\omega\big(x-\gamma_i(t)\big)\big)\widehat{\xi}_{i+1}(t,x)
       \\[2mm]
       &\nonumber\quad-3\left[1-\big[\omega\big(x-\gamma_i(t)\big)\big]^2\right]\xi_{i-2}(t,x)+3\left[1-\big[\omega\big(x-\gamma_i(t)\big)\big]^2\right]\xi_{i+1}(t,x)
        \\[2mm]&\nonumber\quad -3f_i^+(t,x)f_i^-(t,x)\Big[f^-_{i-2}(t,x)+f^+_{i+2}(t,x)\Big]-6f_{i-2}^-(t,x)f_{i-1}^+(t,x)+6f_{i+1}^-(t,x)f_{i+2}^+(t,x)
       \\[2mm]
       &\nonumber\quad+3\sum_{j=i-2}^{i-1}(-1)^{j+i}\Big[f^-_{j}(t,x)\Big]^2-3\sum_{j=i+1}^{i+2}(-1)^{j+i}\Big[f^+_{j}(t,x)\Big]^2
       \\[2mm]
       &\nonumber\quad-3\omega\big(x-\gamma_i(t)\big)\hbar^2(t, x)-\hbar^3(t, x)
       \\[2mm]
       &\nonumber\quad+\sum_{j=i-2}^{i-1}(-1)^{j+i}\Big[f^-_{j}(t,x)\Big]^3+\sum_{j=i+1}^{i+2}(-1)^{j+i}\Big[f^+_{j}(t,x)\Big]^3
       \\[2mm]
       &\nonumber\quad +\partial_{xx}\widehat{f}(t,x)-W''\big(\omega\big(x-\gamma_i(t)\big)\big)\widehat{f}(t,x)
\\[2mm]&\nonumber=d_i(t)\omega'\big(x-\gamma_i(t)\big)
       +\Big[W''\big(\omega\big(x-\gamma_{i-1}(t)\big)\big)-W''\big(\omega\big(x-\gamma_i(t)\big)\big)\Big]\widehat{\xi}_{i-1}(t,x)
      \\[2mm]
      &\nonumber\quad+3\left\{\big[\omega\big(x-\gamma_i(t)\big)\big]^2-\big[\omega\big(x-\gamma_{i-1}(t)\big)\big]^2\right\}\xi_{i-2}(t,x)
       \\[2mm]
      &\nonumber\quad -3\left\{1-\big[\omega\big(x-\gamma_{i-1}(t)\big)\big]^2\right\}\xi_{i-1}(t,x)
       \\[2mm]
       &\nonumber\quad+3\big[f_{i-1}^+\big]^2f_{i-2}^-+3\big[f_{i-1}^-\big]^2f_{i}^+-d_{i-1}(t)\omega'\big(x-\gamma_{i-1}(t)\big)
       \\[2mm]
       &\nonumber\quad+\Big\{W''\big(\omega\big(x-\gamma_{i+1}(t)\big)\big)-W''\big(\omega\big(x-\gamma_i(t)\big)\big)\Big\}\widehat{\xi}_{i+1}(t,x)
       \\[2mm]
      &\nonumber\quad+3\left\{1-\big[\omega\big(x-\gamma_{i+1}(t)\big)\big]^2\right\}\xi_{i}(t,x)
       \\[2mm]
       &\nonumber\quad
       +3\Big\{W''\big(\omega\big(x-\gamma_{i+1}(t)\big)\big)-W''\big(\omega\big(x-\gamma_i(t)\big)\big)\Big\}\xi_{i+1}(t,x)
       \\[2mm]
       &\nonumber\quad+3\big[f_{i+1}^+\big]^2f_{i}^- +3\big[f_{i+1}^-\big]^2f_{i+2}^+
       -d_{i+1}(t)\omega'\big(x-\gamma_{i+1}(t)\big)
        \\[2mm]&\nonumber\quad -3f_i^+(t,x)f_i^-(t,x)\Big[f^-_{i-2}(t,x)+f^+_{i+2}(t,x)\Big]-6f_{i-2}^-(t,x)f_{i-1}^+(t,x)+6f_{i+1}^-(t,x)f_{i+2}^+(t,x)
       \\[2mm]
       &\nonumber\quad+3\sum_{j=i-2}^{i-1}(-1)^{j+i}\Big[f^-_{j}(t,x)\Big]^2-3\sum_{j=i+1}^{i+2}(-1)^{j+i}\Big[f^+_{j}(t,x)\Big]^2
       \\[2mm]
       &\nonumber\quad-3\omega\big(x-\gamma_i(t)\big)\hbar(t, x)^2-\hbar(t, x)^3
       \\[2mm]
       &\nonumber\quad+\sum_{j=i-2}^{i-1}(-1)^{j+i}\Big[f^-_{j}(t,x)\Big]^3+\sum_{j=i+1}^{i+2}(-1)^{j+i}\Big[f^+_{j}(t,x)\Big]^3
       \\[2mm]
       &\nonumber\quad +\partial_{xx}\widehat{f}(t,x)-W''\big(\omega\big(x-\gamma_i(t)\big)\big)\widehat{f}(t,x)
  \\[2mm]&\nonumber
  =d_i(t)\omega'\big(x-\gamma_i(t)\big)-d_{i-1}(t)\omega'\big(x-\gamma_{i-1}(t)\big)-d_{i+1}(t)\omega'\big(x-\gamma_{i+1}(t)\big)
       \\[2mm]
       &\nonumber\quad+3\big[f_{i-1}^-\big]^2f_{i}^+ +3\big[f_{i+1}^+\big]^2f_{i}^--3\left\{1-\big[\omega\big(x-\gamma_{i-1}(t)\big)\big]^2\right\}\xi_{i-1}(t,x)
       \\[2mm]
       &\nonumber\quad+3\left\{1-\big[\omega\big(x-\gamma_{i+1}(t)\big)\big]^2\right\}\xi_{i}(t,x)
       \\[2mm]
       &\nonumber\quad-3\left[\big[f^-_{i-1}\big]^2f^+_i+\big[f^+_{i+1}\big]^2f^-_i\right]-6\omega\big(x-\gamma_i(t)\big)\Big[f^+_{i+1}\xi_i-f^-_{i-1}\xi_{i-1}\Big]
       \\[2mm]
       &\nonumber\quad+3\left[\omega^2\big(x-\gamma_{i-1}(t)\big)-1\right]\left[\widehat{\xi}_{i-1}(t,x)-\xi_{i-2}(t,x)\right]+3\left[\omega^2\big(x-\gamma_i(t)\big)-1\right]\xi_{i-2}(t,x)
       \\[2mm]
       &\nonumber\quad+3\left[\omega^2\big(x-\gamma_{i+1}(t)\big)-1\right]\left[\widehat{\xi}_{i+1}(t,x)-\xi_{i+1}(t,x)\right]+3\left[\omega^2\big(x-\gamma_{i+1}(t)\big)-1\right]\xi_{i+1}(t,x)
       \\[2mm]
       &\nonumber\quad+3\Big\{1-\omega^2\big(x-\gamma_i(t)\big)\Big\}\big[\widehat{\xi}_{i-1}(t,x)+\widehat{\xi}_{i+1}(t,x)\big]+3f_{i-1}^+f_{i-1}^-f_{i-2}^-+3f_{i+1}^-f_{i+1}^+f_{i+2}^+
       \\[2mm]
       &\nonumber\quad-3f_i^+(t,x)f_i^-(t,x)\Big[f^-_{i-2}(t,x)+f^+_{i+2}(t,x)\Big]
        \\[2mm]
       &\nonumber\quad+3\sum_{j=i-2}^{i-1}(-1)^{j+i}\Big[f^-_{j}(t,x)\Big]^2-3\sum_{j=i+1}^{i+2}(-1)^{j+i}\Big[f^+_{j}(t,x)\Big]^2
       \\[2mm]
       &\nonumber\quad+3\left[\big[f^-_{i-1}\big]^2f^+_i+\big[f^+_{i+1}\big]^2f^-_i\right]+6\omega\big(x-\gamma_i(t)\big)\Big[f^+_{i+1}\xi_i-f^-_{i-1}\xi_{i-1}\Big]
       \\[2mm]
       &\nonumber\quad-3\omega\big(x-\gamma_i(t)\big)\hbar(t, x)^2-\hbar(t, x)^3
        \\[2mm]
       &\nonumber\quad+\sum_{j=i-2}^{i-1}(-1)^{j+i}\Big[f^-_{j}(t,x)\Big]^3+\sum_{j=i+1}^{i+2}(-1)^{j+i}\Big[f^+_{j}(t,x)\Big]^3
       \\[2mm]
       &\nonumber\quad +\partial_{xx}\widehat{f}(t,x)-W''\big(\omega\big(x-\gamma_i(t)\big)\big)\widehat{f}(t,x)
  \\[2mm]
  &:=d_i(t)\omega'\big(x-\gamma_i(t)\big)-d_{i-1}(t)\omega'\big(x-\gamma_{i-1}(t)\big)-d_{i+1}(t)\omega'\big(x-\gamma_{i+1}(t)\big)+\mathcal{E}_i(z(t,x)).
\label{esee}
\end{align}
  According to \eqref{asymptotic} and Lemmas \ref{esxi}-\ref{lem33}, we get that the function  $\mathcal{E}_i(z(t,x))$ satisfies that
  \begin{align}\label{estmE}
  \mathcal{E}_i(z(t,x))=O\left(t^{-\frac{3}{4}-\frac{\sigma}{4\sqrt{2}}}\right),
  \end{align}
  for all $x\in\left[\frac{\gamma^0_{i-1}(t)+\gamma^0_i(t)}{2},\frac{\gamma^0_i(t)+\gamma^0_{i+1}(t)}{2}\right]$ and $t>T$, and $\sigma\in\left(0,\sqrt{2}\right)$.

  Thus, by the Taylor expansion and Lemmas \ref{esxi}-\ref{lem33} again, we have that
  \begin{align}\label{eq2}
  \nonumber&(-1)^{i+1}\Big[\partial_{xx}-W''(z(t,x))\Big]\Big[\partial_{xx}z(t,x)-W'(z(t,x))\Big]
  \\[2mm]
  &\nonumber=\Big[\partial_{xx}-W''(\omega\big(x-\gamma_i(t)\big)+\check{f}(t,x))\Big]\Big[\partial_{xx}z(t,x)-W'(z(t,x))\Big]
  \\[2mm]
  &\nonumber=\Big[\partial_{xx}-W''\big(\omega\big(x-\gamma_i(t)\big)\big)\Big]\Big[\partial_{xx}z(t,x)-W'(z(t,x))\Big]
  \\[2mm]
  &\nonumber\quad  +O\left(\check{f}(t,x)\right)\Big[\partial_{xx}z(t,x)-W'(z(t,x))\Big]
  \\[2mm]
  &\nonumber=\Big[\partial_{xx}-W''\big(\omega\big(x-\gamma_i(t)\big)\big)\Big]\Big[\partial_{xx}z(t,x)-W'(z(t,x))\Big]+O\left(t^{-\frac{3}{4}-\frac{\sigma}{4\sqrt{2}}}\right)
  \\[2mm]
  &\nonumber=-d_{i-1}(t)\Big[W''\big(\omega\big(x-\gamma_{i-1}(t)\big)\big)-W''\big(\omega\big(x-\gamma_i(t)\big)\big)\Big]\omega'\big(x-\gamma_{i-1}(t)\big)
  \\[2mm]
  &\nonumber\quad-d_{i+1}(t)\Big[W''\big(\omega\big(x-\gamma_{i+1}(t)\big)\big)-W''\big(\omega\big(x-\gamma_i(t)\big)\big)\Big]\omega'\big(x-\gamma_{i+1}(t)\big)+O\left(t^{-\frac{3}{4}-\frac{\sigma}{4\sqrt{2}}}\right)
  \\[2mm]
  &=O\left(t^{-\frac{3}{4}-\frac{\sigma}{4\sqrt{2}}}\right),
  \end{align}
  where $\sigma\in(0,\sqrt{2})$, and the function $\check{f}(t,x)$ is defined as
  \begin{align*}
    \check{f}(t,x):=&\sum_{j=1}^{i-1}(-1)^{j+1}\bigg[\omega\big(x-\gamma_j(t)\big)-1\bigg]+\sum_{j=i+1}^{k}(-1)^{j+i}\bigg[\omega\big(x-\gamma_j(t)\big)+1\bigg]
  \\[2mm]&-\sum_{j=1}^{k-1}(-1)^{j+1}\xi_j(t,x)-\sum_{j=1}^k(-1)^{j+1}\widehat{\xi}(t,x).
  \end{align*}

Hence according to \eqref{eq3} and \eqref{eq2}, we have that
\begin{align*}
\abs{E(t,x)}\leq Ct^{-\frac{3}{4}-\frac{\sigma}{4\sqrt{2}}}\qquad \text{for all}\ (t,x)\in[T,+\infty)\times\left[\frac{\gamma^0_1(t)+\gamma^2_2(t)}{2},\frac{\gamma^0_{k-1}(t)+\gamma^0_{k}(t)}{2}\right].
\end{align*}
Using the above estimate and the following fact that
\begin{align*}
\abs{\Phi(t,x)}\geq t^{-\frac{3}{4}-\frac{\sigma}{8\sqrt{2}}}\frac{1}{[\eta(t)]^\alpha}= t^{-\frac{3}{4}-\frac{\sigma}{8\sqrt{2}}}\left[\frac{\ln t}{2\sqrt{2}}\right]^{-\alpha}\quad \text{for}\ x\in\left[\frac{\gamma^0_1(t)+\gamma^2_2(t)}{2},\frac{\gamma^0_{k-1}(t)+\gamma^0_{k}(t)}{2}\right],
\end{align*}
we obtain that
\begin{align*}
\abs{E(t,x)}\leq C t^{-\frac{\sigma}{16\sqrt{2}}}\Phi(t,x),
\end{align*}
in this case, where $C>0$ does not depend on $t$ and $x$.

\smallskip
\textbf{Case two:} {\em when $x\leq\frac{\gamma^0_1(t)+\gamma^2_2(t)}{2}$ or $x\geq\frac{\gamma^0_{k-1}(t)+\gamma^0_{k}(t)}{2}$}

Similarly, we can obtain that
  $$\abs{E(t,x)}=O\left(t^{-\frac{3}{4}-\frac{\sigma}{4\sqrt{2}}}\right).$$
  Moreover, according \eqref{asymptotic} and Lemmas \ref{esxi}-\ref{lem33}, we have that $E(t,x)$ has an exponential decay i.e.
  \begin{equation*}
    \abs{E(t,x)}=O\left(t^{-\frac{3}{4}-\frac{\sigma}{4\sqrt{2}}}e^{-\sigma\abs{x-\gamma^0_1(t)}}\right)\qquad \text{for all}\ x\leq\gamma^0_1(t),
  \end{equation*}
  and
 \begin{equation*}
    \abs{E(t,x)}=O\left(t^{-\frac{3}{4}-\frac{\sigma}{4\sqrt{2}}}e^{-\sigma\abs{x-\gamma^0_k(t)}}\right)\qquad \text{for all}\ x\geq\gamma^0_k(t).
  \end{equation*}

  Combining all above and the following facts that
  \begin{align*}
  t^{-\frac{3}{4}-\frac{\sigma}{4\sqrt{2}}}e^{-\sigma\abs{x-\gamma^0_1(t)}}&\leq C(\alpha,\sigma) t^{-\frac{3}{4}-\frac{\sigma}{4\sqrt{2}}}\left[1+\abs{x-\gamma^0_1(t)}\right]^{-\alpha}
  \\[2mm]&\leq C(\alpha,\sigma) t^{-\frac{3}{4}-\frac{3\sigma}{16\sqrt{2}}}\left[1+\abs{x-\gamma^0_2(t)}\right]^{-\alpha}\qquad \text{for }\ x\leq\gamma^0_1(t),
  \end{align*}
  and
  \begin{align*}
  t^{-\frac{3}{4}-\frac{\sigma}{4\sqrt{2}}}e^{-\sigma\abs{x-\gamma^0_k(t)}}&\leq C(\alpha,\sigma) t^{-\frac{3}{4}-\frac{\sigma}{4\sqrt{2}}}\left[1+\abs{x-\gamma^0_k(t)}\right]^{-\alpha}
  \\[2mm]&\leq C(\alpha,\sigma) t^{-\frac{3}{4}-\frac{3\sigma}{16\sqrt{2}}}\left[1+\abs{x-\gamma^0_{k-1}(t)}\right]^{-\alpha}\qquad \text{for }\ x\geq\gamma^0_k(t),
  \end{align*}
   we can obtain the desired result in the lemma.
\end{proof}

\begin{rem}
{\em
According the proof of the above lemma, the error term $E(t,x)$
  satisfies that
   $$E(t,x)\sim t^{-\frac{3}{4}}e^{-\sigma\abs{x}},$$
  with $0<\sigma<\sqrt{2}$ when $x$ goes to infinity.
The linearization  operator of problem \eqref{eq7.7} at $z(t,x)$ is defined as
\begin{equation*}\begin{aligned}
\partial_t-F'(z(t,x))=&\partial_t+\partial_{xxxx}-2W''(z(t,x))\partial_{xx}-\partial_{xx}\Big{[}W''(z(t,x))+(W''(z(t,x))\Big{]}^2\\ &-2W'''(z(t,x))\partial_xz(t,x)\partial_x-W'''(z(t,x))\Big{[}\partial_{xx}z(t,x)-W'(z(t,x))\Big{]}.
\end{aligned}\end{equation*}
However, the kernel of this linear fourth order parabolic operator does not provide enough decay, that is  solutions $\phi(t,x)$ of linear parabolic equation
$$\partial_t\phi-F'(z(t,x))\phi=f\ \  \ \ \ \ \text{with}\ f \ \text{satisfying}\ \sup\limits_{(t,x)\in (T,+\infty)\times{\mathbb R}}t^{\frac{3}{4}}e^{\sigma\abs{x}}\abs{f}<+\infty,$$
 may does not have exponential decay $e^{-\sigma\abs{x}}$.  As far as we known,  the solution $\phi(t,x)$ satisfies polynomial decay.
 }
 \qed
 \end{rem}

\section{The linear problems}\label{sec:lp}

In this section, we will prove that the linear projected problem \eqref{eq3.1} is solvable by using Proposition \ref{prop8} and a priori estimate in Lemma \ref{lem5}. The main result is given by Proposition \ref{prop2}.

\subsection{Some preliminary facts}
Let $u_0\in C^1({\mathbb R}^n)\cap L^\infty({\mathbb R}^n)$, and we consider the initial value problem
\begin{equation}\label{heatnonlineareq}
\left\{
\begin{array}{l}
u_t+(-\Delta)^2u=G[\Delta u,\nabla u,u,t,x]\qquad \text{in}\ (t_0,t_1)\times{\mathbb R},
\\[2mm] u(t_0,x)=u_0(x)\qquad \text{in}\ {\mathbb R},
 \end{array}
\right.
\end{equation}
where $G[p,\vec{q},s,t,x]:{\mathbb R}\times{\mathbb R}\times{\mathbb R}\times[t_0,+\infty)\times{\mathbb R}\rightarrow {\mathbb R}$  is a measurable function satisfying
\begin{enumerate}
\item For every $M>0$ and $T>t_0$, there exists $C_{T,M}>0$ such that
$$
\Big{|}G[p,\vec{q},s,t,x]\Big{|}\leq C_{T,M}
$$
for all $x\in{\mathbb R}^n$, $t\in[t_0,T]$ and $p,\abs{\vec{q}},s \in[-M,M]$.
\item There is a constant $L>0$ satisfies
\begin{equation}\label{delpf}
  \Big{|}G[p_1,\vec{q}_1,s_1,t,x]-G[p_2,\vec{q}_2,s_2,t,x]\Big{|}\leq L\Big{(}\abs{p_1-p_2}+\abs{\vec{q}_1-\vec{q}_2}+\abs{s_1-s_2}\Big{)},
\end{equation}
for all $ x\in{\mathbb R}^n,t\geq t_0$, $p_1,p_2 \in{\mathbb R}$, $\vec{q}_1,\vec{q}_2\in{\mathbb R}^n$ and $s_1,s_2\in{\mathbb R}$.
\end{enumerate}
 In particular, we can take
\begin{equation}\label{defG}
G[\Delta u,\nabla u,u, t, x]=a(t,x)\Delta u+\sum_{i=1}^nb^i(t,x)\nabla_{x_i} u+c(t,x)u+g(t,x),
\end{equation}
where those functions $a(t,x)$, $b^1(t,x),\cdots, b^n(t, x)$, $c(t,x)\in L^\infty ([t_0,+\infty),C^1({\mathbb R}))$ and $g\in L^\infty([t_0,+\infty)\times{\mathbb R})$.

The solvability of problem \eqref{heatnonlineareq} is given by the following proposition which comes from Proposition $3.4$ in \cite{ly}.
\begin{prop}\label{prop8}
For any given $u_0\in C^1({\mathbb R})\cap L^\infty({\mathbb R})$, there exists a unique solution $u$ to problem \eqref{heatnonlineareq} with $t_1=+\infty$. Moreover, if $u_0\in C^2({\mathbb R})$, then the solution $u$ has the property: for any $s>t_0$, there exists $C$ independent of $s$ such that
\begin{equation*}
\sup_{t\in(t_0,s)}\|u(t,\cdot)\|_{C^{2}({\mathbb R})}
 \leq C\Big{(}\Big[(s-t_0)+(s-t_0)^{1/2}\Big]\|G(0,0,0,t,x)\|_{L^{\infty}((t_0,s)\times{\mathbb R})}+\|u_0\|_{C^2({\mathbb R})}\Big{)}.
\end{equation*}
\qed
\end{prop}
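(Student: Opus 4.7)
Since the statement is explicitly attributed to Proposition~3.4 of the companion paper \cite{ly}, my first move would simply be to invoke that result. Assuming instead that we want to reconstruct the proof, the natural strategy is Duhamel's formula combined with a contraction mapping argument, since the fourth-order parabolic operator $\partial_t + (-\Delta)^2$ has an explicit convolution kernel but no useful maximum principle.

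First I would recall the biharmonic heat kernel $K(t,x)$ solving $\partial_t K+(-\Delta)^2 K=0$ with $K(0,\cdot)=\delta_0$, together with its classical Davies--Gaffney type bounds
\[
|\partial_x^\alpha K(t,x)|\leq C t^{-(n+|\alpha|)/4}\exp\bigl(-c|x|^{4/3}t^{-1/3}\bigr),
\]
so that the semigroup $S(t)\phi:=K(t,\cdot)*\phi$ maps $L^\infty$ to $L^\infty$ and gains regularity at the price of a factor $t^{-|\alpha|/4}$. Then I would rewrite \eqref{heatnonlineareq} in mild form
\[
u(t,x)=\bigl[S(t-t_0)u_0\bigr](x)+\int_{t_0}^t \bigl[S(t-\tau)\,G[\Delta u,\nabla u,u,\tau,\cdot]\bigr](x)\,\mathrm{d}\tau,
\]
and define the corresponding Picard operator $\Psi$.

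Next I would establish local existence and uniqueness on a short interval $[t_0,t_0+\delta]$ by showing that $\Psi$ is a contraction on a closed ball of $C([t_0,t_0+\delta];C^2(\mathbb{R}))$. The key inputs are: (a) the boundedness assumption on $G$ for bounded arguments, which controls $\Psi$ into the ball; (b) the Lipschitz assumption \eqref{delpf}, which together with the smoothing estimates $\|S(t-\tau)\|_{L^\infty\to C^2}\lesssim (t-\tau)^{-1/2}$ gives contraction for $\delta$ small (the exponent $1/2$ comes from gaining two derivatives via $t^{-2/4}$); and (c) the preservation of $C^1\cap L^\infty$ regularity by the semigroup. Global existence then follows by iterating, using that the Lipschitz bound prevents finite-time blow-up of $\|u(t,\cdot)\|_{L^\infty}$.

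For the quantitative $C^2$ bound I would split
\[
G[\Delta u,\nabla u,u,\tau,x]=G[0,0,0,\tau,x]+\bigl(G[\Delta u,\nabla u,u,\tau,x]-G[0,0,0,\tau,x]\bigr),
\]
treat the first summand via the kernel estimates (which yield the $(s-t_0)+(s-t_0)^{1/2}$ factor after integrating $(t-\tau)^{0}$ and $(t-\tau)^{-1/2}$ in time), and absorb the second summand using \eqref{delpf} and Gronwall's inequality. The $\|u_0\|_{C^2}$ contribution comes from applying $S(t-t_0)$ directly to the initial datum, for which $C^2$ regularity is preserved without any temporal smoothing factor. The main technical obstacle is handling the derivatives $\Delta u$ and $\nabla u$ inside $G$: because the kernel's smoothing only gives integrable singularities $(t-\tau)^{-1/2}$ and $(t-\tau)^{-1/4}$, one must carefully choose the function space and the contraction radius; this is precisely the lack of maximum principle for fourth-order equations flagged in Remark~\ref{remark1.3}(2), which forces the whole argument to proceed through explicit kernel bounds rather than comparison.
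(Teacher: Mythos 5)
Your first move is exactly what the paper does: Proposition~\ref{prop8} is not proved here at all, but is simply quoted from Proposition~3.4 of \cite{ly}, so citing that result is the intended ``proof.'' Your reconstruction via the mild (Duhamel) formulation, the biharmonic heat kernel bounds with their $t^{-|\alpha|/4}$ smoothing rates, a short-time contraction in $C([t_0,t_0+\delta];C^2(\mathbb{R}))$, and a Gronwall absorption of the Lipschitz part of $G$ is the standard argument behind the cited result and correctly accounts for the $(s-t_0)+(s-t_0)^{1/2}$ factor in the stated estimate.
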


\subsection{The solvability of linear problem}
In what follows our main purpose is to solve the problem (\ref{eq7.7})-\eqref{eq7.8}.
To achieve it, we first find a solution of the following linear parabolic problem:
\begin{equation}\label{eq3.1}
\left\{
\begin{array}{l}
\psi_t=F'(z(t,x))[\psi]+g(t,x)-\sum\limits_{j=1}^kc_j(t)\omega'\big(x-\gamma_j(t)\big)
\qquad \text{in}\ \ [T,+\infty)\times{\mathbb R},\\[6mm]
\int_{{\mathbb R}}\psi(t,x)\omega'\big(x-\gamma_j(t)\big)\,{\mathrm d}x=0\qquad \text{for all}\  t\in [T,+\infty) \ \text{and} \ j=1,\cdots,k,
 \end{array}
\right.
\end{equation}
for a bounded function $g$, and $T$ fixed sufficiently large, where the linear operator $F'(z(t,x))[\psi]$ is defined by \eqref{deF'}.
The numbers $c_j(t)$ are exactly those that make the relations above consistent, namely, by definition for each $t>T$
they solve the linear system of the equations
\begin{equation}\label{eq3.2}
  \begin{aligned}
  &\sum_{j=1}^kc_j(t)\int_{{\mathbb R}}\omega'\big(x-\gamma_j(t)\big)\omega'\big(x-\gamma_i(t)\big)\,{\mathrm d}x\\[2mm]&=
  \int_{{\mathbb R}} F'(z(t,x))[\psi]\omega'\big(x-\gamma_i(t)\big)\,{\mathrm d}x-\gamma_i'(t)\int_{{\mathbb R}} \psi(t,x)\omega''\big(x-\gamma_i(t)\big)\,{\mathrm d}x\\[2mm]&
  \ \ \ +\int_{{\mathbb R}}g(t,x)\omega'(x-\gamma_i(t)\,{\mathrm d}x,\quad \forall\ i=1,\cdots,k\ \text{and}\ t>T.
  \end{aligned}
\end{equation}
This system can indeed be solved uniquely since if $T$ is taken sufficiently large, the matrix with coefficients $\int_{{\mathbb R}}\omega'\big(x-\gamma_j(t)\big)\omega'\big(x-\gamma_i(t)\big)\,{\mathrm d}x$ are nearly diagonal.

Next we mainly build a linear operator $\psi=\mathcal{A}(g)$ that defines a solution of (\ref{eq3.1}) which is bounded for a norm suitably adapted to our setting.
Defining a function space
\begin{equation}\label{eq3.3}
  C_{\Phi}((s,t)\times{\mathbb R}):=\left\{ u : \|u\|_{C_{\Phi}((s,t)\times{\mathbb R})}:=\left\| \frac{u(t,x)}{\Phi(t,x)}\right\|
_{L^\infty((s,t)\times{\mathbb R})}<+\infty\right\},
\end{equation}
where $s>t\geq T$, the function $\Phi(t,x)$ is given by (\ref{dphi}).

The main result in this subsection is as follows:
\begin{prop}\label{prop2}
There exist positive numbers $T_0$ and  $C$ such that for each $g\in C_{\Phi}((0,+\infty)\times{\mathbb R}) $,
there exists a solution $\psi=\mathcal{A}(g)$ of the problem \eqref{eq3.1} with $T=T_0$,  which defines a linear operator of $g$ and
satisfies the estimate
\begin{align}
& \|\psi\|_{C_{\Phi}((t,+\infty)\times{\mathbb R})} +\|\psi_x\|_{C_{\Phi}((t,+\infty)\times{\mathbb R})}+ \|\psi_{xx}\|_{C_{\Phi}((t,+\infty)\times{\mathbb R})}
\nonumber\\[2mm]
&\leq C\|g\|_{C_{\Phi}((t,+\infty)\times{\mathbb R})}, \quad \forall\ t\geq T_0.
\label{eq3.4}
\end{align}
\end{prop}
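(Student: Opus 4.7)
The plan follows a standard two-step strategy for parabolic linear projected problems: establish a priori estimates first, then construct the solution via approximation on finite time horizons and pass to the limit.

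First I would establish the estimate (\ref{eq3.4}) a priori for any bounded solution, via a blow-up / contradiction argument. Since (\ref{eq3.1}) is fourth order parabolic, no comparison principle is available, so the arguments used by \cite{dG1} for the Allen--Cahn case must be replaced by the heat-kernel / blow-up technique already deployed in \cite{ly}. Suppose (\ref{eq3.4}) fails: take sequences $g_n$ with $\|g_n\|_{C_\Phi}\to 0$ and solutions $\psi_n$ of (\ref{eq3.1}) satisfying $\|\psi_n\|_{C_\Phi}+\|\partial_x\psi_n\|_{C_\Phi}+\|\partial_{xx}\psi_n\|_{C_\Phi}=1$. Pick $(t_n,x_n)$ where a definite fraction of the norm is attained. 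If $x_n-\gamma_{j_n}(t_n)$ stays bounded for some $j_n$, translate $\tilde x = x-\gamma_{j_n}(t_n)$ and pass to a limit $\varphi$; the terms $W''(z(t,x))$ converge to $W''(\omega(\tilde x))$ because the neighbouring layers are exponentially far away, and $t_n\to+\infty$ freezes the time derivative, so $\varphi$ solves the stationary limit $\big[-\partial_{xx}+W''(\omega)\big]^2\varphi=0$ with $\varphi,\partial_{xx}\varphi\in L^\infty$. The kernel lemma in Section \ref{sec:a} forces $\varphi=\lambda\omega'$; the orthogonality condition in (\ref{eq3.1}) passes to the limit and kills $\lambda$, giving $\varphi\equiv 0$ and contradicting the normalization. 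If instead $x_n$ escapes all interfaces, the limiting equation is $[-\partial_{xx}+W''(\pm1)]^2\varphi=0$, whose only bounded solutions are zero. This is exactly the content of the announced Lemma \ref{lem5}.

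Once the a priori estimate holds, for each large $n$ I would solve the truncated Cauchy problem on $[T_0,T_0+n]\times\mathbb R$ with zero initial data. The Lagrange multipliers $c_j(t)$ should be eliminated first: the system (\ref{eq3.2}) is nearly diagonal for $T_0$ large because, by (\ref{degama0}), the $\gamma_j$'s are widely separated so $\int_{\mathbb R}\omega'(x-\gamma_i)\omega'(x-\gamma_j)\,{\mathrm d}x$ is $o(1)$ for $i\neq j$ and bounded away from $0$ for $i=j$. Inverting this matrix expresses each $c_j$ as a bounded linear functional of $\psi,\partial_x\psi,\partial_{xx}\psi$, so (\ref{eq3.1}) becomes an unconstrained fourth order semilinear parabolic equation of the form covered by Proposition \ref{prop8}, yielding a unique global solution $\psi_n$ on the truncated interval. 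The derivation of (\ref{eq3.2}) from differentiation of the orthogonality constraint ensures that the constraint is propagated for all $t$.

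With $\psi_n$ in hand and (\ref{eq3.4}) applied on $[T_0,T_0+n]$ uniformly in $n$, the sequence is locally bounded in $C^{2,0}$; interior parabolic regularity (again via the heat-kernel representation used in Section \ref{sec:lp}) upgrades this to the compactness needed to extract a subsequential limit $\psi$ defined on $[T_0,+\infty)\times\mathbb R$. Linearity of the problem makes $\mathcal A$ a well-defined linear operator. The main obstacle is precisely the blow-up argument: the weight $\Phi$ in (\ref{dphi}) is a sum of polynomially decaying contributions centered at each $\gamma^0_j(t)$, so the rescaling analysis must distinguish several spatial regimes (interior of the cluster, the leftmost/rightmost far-field, and intermediate zones) and verify in each case that the rescaled weight $\Phi(t_n,\tilde x+\gamma_{j_n}(t_n))/\Phi(t_n,x_n)$ admits a nontrivial limit compatible with the elliptic limit problem. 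Tracking this transformation carefully, together with the kernel lemma and the orthogonality condition, closes the argument.
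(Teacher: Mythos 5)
Your overall architecture (a priori estimate by contradiction/blow-up, then construction on finite horizons and passage to the limit) matches the paper's, but the central step of your blow-up argument contains a genuine gap. After translating time by $\bar t_m\to+\infty$ you claim that the time derivative "freezes" and that the limit $\varphi$ solves the stationary problem $\big[-\partial_{xx}+W''(\omega)\big]^2\varphi=0$, so that the kernel lemma applies. This is not justified: the change of variables is a mere time translation, so $\partial_t$ survives, and the limit is a bounded \emph{ancient} solution of the autonomous parabolic equation $\varphi_t=-\varphi_{yyyy}+2W''(\omega(y+y_0))\varphi_{yy}+\cdots$ on $(-\infty,0]\times{\mathbb R}$ (respectively $\varphi_t=-\varphi_{yyyy}+4\varphi_{yy}-4\varphi$ in the far-field regime). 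Ruling out such ancient solutions is precisely where the work lies, and the paper does it in two steps your proposal omits: first a spatial decay estimate for the limit, obtained from the Duhamel representation with the biharmonic heat kernel $Q(\tau,y)$, so that the limit lies in $L^2$ in $y$; then a backward Gronwall argument based on the coercivity inequality $\int_{\mathbb R}\abs{\phi''-W''(\omega)\phi}^2\,{\mathrm d}y\geq c\int_{\mathbb R}\abs{\phi}^2\,{\mathrm d}y$ on the orthogonal complement of $\omega'$ (Lemma \ref{lem2}), which yields $\widetilde a(t)\geq \widetilde a(0)e^{-2ct}$ for $t<0$ and contradicts boundedness unless the limit vanishes. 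Without the decay step and the spectral-gap/Gronwall step, your reduction to the elliptic kernel characterization is unsupported; the same objection applies to your far-field case, where you again invoke only the stationary equation.

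A secondary issue concerns the construction step: you propose to eliminate the $c_j$'s by inverting the nearly diagonal Gram matrix and then to apply Proposition \ref{prop8} directly to the resulting equation. But after substitution the $c_j$'s are integrals of $\psi$ over all of ${\mathbb R}$, so the equation becomes nonlocal and is not of the form $G[\Delta u,\nabla u,u,t,x]$ covered by Proposition \ref{prop8}. The paper instead runs a contraction mapping on unit time intervals $[T,T+1]$, treating $\sum_j c_j(t)\omega'(x-\gamma_j(t))$ as a small perturbation whose $C_\Phi$ norm is controlled by $O(|T|^{-1/2})\|\psi\|_{C_\Phi}$ plus a small multiple of $\|f\|_{C_\Phi}$ (Lemma \ref{lem6}), and then iterates to extend the solution to $[T,s)$. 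This part is fixable within your framework, but as written the direct appeal to Proposition \ref{prop8} does not apply.
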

The proof of Proposition \ref{prop2} will be provided in Section \ref{section3.2.4}.
It bases on several lemmas in Sections \ref{section3.2.1}-\ref{section3.2.3}.

\subsubsection{}\label{section3.2.1}
Let us first consider the following Cauchy problem:
\begin{equation}\label{eq3.5}
\left\{
\begin{array}{l}
\psi_t=F'(z(t,x))[\psi]+g(t,x)\qquad  \text{for}\ (t,x)\in [T,s)\times{\mathbb R},\\[3mm]
\psi(T,x)=0\qquad  \text{for}\ x\in {\mathbb R},
 \end{array}
\right.
\end{equation}
where $g(t,x)\in C_{\Phi}((T,+\infty)\times{\mathbb R})$, $T>2$ and $s>T+1$. Notice that the above problem \eqref{eq3.5} has a unique solution $\psi^s(t,r)$.
Indeed, recall the definition of  $F'(z(t,x))[\psi]$ in \eqref{deF'},
\begin{equation*}\begin{aligned}
 F'(z(t,x))[\psi]=&-\psi_{xxxx}+2W''\big(z(t,x)\big)\psi_{xx}+2W'''\big(z(t,x)\big)z'(t,x)\psi_x-\big[W''(z(t,x))\big]^2\psi
 \\[2mm]&
 +\big[W''(z(t,x))\big]''\psi+W'''\big(z(t,x)\big)\Big{(}z''(t,x)-W'(z(t,x))\Big{)}\psi,
\end{aligned}\end{equation*}
 where the approximate solution $z(t,x)$ is defined by \eqref{dzz}. In above equation, we find that
 those coefficients in the front of the terms $\partial_{xx}\psi$, $\partial_x\psi$ and $\psi$, are all smooth and bounded for $x\in{\mathbb R}$ and $t>2$. Thus, according to
 Proposition \ref{prop8}, we know that the problem \eqref{eq3.5} is uniquely solvable.

We will establish a priori estimate for the solution $\psi^s$ of (\ref{eq3.5}) which is independent of $s$.

\begin{lem}\label{lem5}
Let $g\in C_{\Phi}((T,s)\times{\mathbb R})$ and $\psi^s$ be a solution of the problem
$(\ref{eq3.5})$ which satisfies the orthogonality condition
\begin{equation}\label{eq3.6}
  \int_{\mathbb R} \psi^s(t,x)\omega'\big(x-\gamma_j(t)\big)\,{\mathrm d}x=0 \qquad \text{for all}\ j=1,\cdots,k\ \text{and}\ t\in\left(T,s\right).
\end{equation}
Then there exists a large constant $T_0>2$ such that for any $T>T_0$, the following estimates is valid
\begin{equation}\label{eq3.7}
  \left\|\psi^s\right\|_{C_{\Phi}((T,s)\times{\mathbb R})}+\|\psi^s_x\|_{C_{\Phi}((T,s)\times{\mathbb R})}+ \|\psi^s_{xx}\|_{C_{\Phi}((T,s)\times{\mathbb R})}\leq C\|g\|_{C_{\Phi}((T,s)\times{\mathbb R})},
\end{equation}
where $C>0$ is a uniform constant depending on $T_0$.
\end{lem}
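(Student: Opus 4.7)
The plan is to prove Lemma \ref{lem5} by a contradiction blow-up argument, exploiting the structure of the linearized operator $F'(z)$ near each interface $\gamma_j(t)$ and in the bulk region where $z \approx \pm 1$. Suppose the estimate fails: then there exist sequences $T_n\to\infty$, $s_n>T_n+1$, forcing terms $g_n$ with $\|g_n\|_{C_\Phi((T_n,s_n)\times\R)}\to 0$, and solutions $\psi_n$ of \eqref{eq3.5}--\eqref{eq3.6} with $\|\psi_n\|_{C_\Phi}=1$. Choose almost-maximizing points $(t_n,x_n)$ for $|\psi_n|/\Phi$ and define
$$
\tilde\psi_n(\tau,y):=\frac{\psi_n(t_n+\tau,\,x_n+y)}{\Phi(t_n,x_n)}.
$$
Because $\gamma_j^0(t)$ moves only $O(t^{-1})$ per unit of time and $\Phi$ is polynomial in $x$, the weight varies slowly on any bounded space--time cylinder, so $\tilde\psi_n$ stays uniformly bounded on each fixed compact set. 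I then extract a subsequence according to whether $d_n:=\min_j|x_n-\gamma_j(t_n)|$ remains bounded (Case A) or diverges (Case B).

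In Case A, after further shifting so that $y=0$ sits on $\gamma_{j_n}(t_n+\tau)$, the coefficients of $F'(z)$ converge locally uniformly to those of the single-interface linearization: the drift $\gamma_{j_n}'(t)\partial_y$ dies since $\gamma_j'(t)=O(t^{-1})$, the neighbouring interfaces recede at speed $\sim\ln t_n/(2\sqrt{2})$, and the correctors $\xi_j,\widehat\xi_j$ vanish by Lemmas \ref{esxi}--\ref{lem33}. Interior $L^p$--$W^{4,p}$ estimates for the fourth-order parabolic equation give local compactness, yielding a globally bounded eternal limit $\tilde\psi_\infty$ with
$$
\partial_\tau\tilde\psi_\infty+\bigl[\partial_y^2-W''(\omega(y))\bigr]^2\tilde\psi_\infty=0,\qquad \int_\R \tilde\psi_\infty(\tau,y)\,\omega'(y)\,dy=0,
$$
and $|\tilde\psi_\infty(0,y_0)|\geq 1/2$ at some finite $y_0$ (the integral orthogonality passes to the limit by dominated convergence, using the exponential decay of $\omega'$). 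Setting $L:=-\partial_y^2+W''(\omega)$, the operator $L$ is self-adjoint with $\ker L=\mathrm{span}\{\omega'\}$ and a spectral gap (the next eigenvalue is $3/2$, with essential spectrum $[2,\infty)$); combined with the kernel characterization stated before this lemma, this forces $\tilde\psi_\infty\equiv 0$, contradicting the lower bound at $y_0$.

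In Case B ($d_n\to\infty$), $W''(z(t_n+\tau,x_n+y))\to 2$ uniformly on compact sets, so the limit solves the constant-coefficient biharmonic heat equation
$$
\partial_\tau\tilde\psi_\infty+(\partial_y^2-2)^2\tilde\psi_\infty=0\quad\text{on }\R\times\R,
$$
with $|\tilde\psi_\infty(0,0)|\geq 1/2$ and global boundedness inherited from the ratio $\Phi(t_n+\tau,x_n+y)/\Phi(t_n,x_n)$. A Liouville-type statement, run through the explicit parabolic kernel of $\partial_\tau+(\partial_y^2-2)^2$ whose Fourier symbol $e^{-(\xi^2+2)^2\tau}$ decays super-exponentially in $|\xi|$ for $\tau>0$, shows $\tilde\psi_\infty\equiv 0$, again a contradiction. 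Once the $C_\Phi$ bound on $\psi^s$ is secured, the derivative bounds $\|\partial_x\psi^s\|_{C_\Phi}$ and $\|\partial_{xx}\psi^s\|_{C_\Phi}$ follow from interior regularity provided by Proposition \ref{prop8} on unit space--time cylinders, combined with the slow variation of $\Phi$ on such cylinders.

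The principal obstacle is Case B: because the fourth-order operator admits no maximum principle (as emphasized in Remark \ref{remark1.3}), one cannot annihilate the blow-up limit by comparison and must invoke a Liouville theorem for the biharmonic heat equation. This is precisely why the weight $\Phi$ in \eqref{dphi} is designed with polynomial (rather than exponential) decay and with $\alpha>1$, so that the rescaled profile in Case B remains controlled at infinity but still admits the Liouville annihilation.
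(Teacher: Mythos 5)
Your overall strategy coincides with the paper's: a blow-up/contradiction argument with a dichotomy between near-maximizing points that stay at bounded distance from some interface (your Case A, which is the paper's \textbf{Assertion} proved in Section \ref{section3.2.2}) and points receding from all interfaces (your Case B, the paper's Steps one and two), with the far region annihilated by a Duhamel/Liouville argument for the damped biharmonic heat flow through the kernel $Q$, and the near region annihilated by orthogonality to $\omega'$ together with the spectral gap of $-\partial_y^2+W''(\omega)$.

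There is, however, a concrete gap in your Case A. The rescaling only yields a limit $\tilde\psi_\infty$ that is \emph{bounded} on $(-\infty,0]\times{\mathbb R}$: near an interface the weight ratio $\Phi(t_n+\tau,x_n+y)/\Phi(t_n,x_n)$ is of order one on compact sets and provides no spatial decay. But the step ``spectral gap $\Rightarrow\ \tilde\psi_\infty\equiv 0$'' is an $L^2$ energy/Gronwall argument: one needs $\tilde\psi_\infty(\tau,\cdot)\in L^2({\mathbb R})$ uniformly in $\tau$, since the coercivity inequality of Lemma \ref{lem2} forces backward-in-time exponential growth of $\int_{\mathbb R}\tilde\psi_\infty^2\,{\mathrm d}y$, and this is only a contradiction if that quantity is finite and uniformly controlled. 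A bounded ancient solution orthogonal to $\omega'$ is not a priori square-integrable, so boundedness plus the kernel characterization does not by itself force vanishing. The paper closes exactly this hole in its \textbf{Step 3}, proving the pointwise decay $|\widehat\psi(t,y)|\le Ce^{-\sqrt{2}|y|/2}$ (estimate \eqref{eq3.18}) via the representation formula with the kernel $Q$ and the exponential localization of the coefficients $W''(\hat z)-W''(1)$ and $\partial_y\hat z$, and only then runs the Gronwall argument. You would need to insert the analogous decay estimate before invoking the spectral gap; the remainder of your outline, including Case B and the recovery of the derivative norms from interior regularity on unit cylinders, matches the paper's proof.
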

\begin{proof}

We prove the above lemma by arguments of contradiction. Assume that there exist two sequences
$\{s_m\}$ and $\{t_m\}$ such that $0<s_m+1<t_m$ and $s_m\rightarrow +\infty$,  $t_m\rightarrow +\infty$
when the sub-index $m$ goes to infinity. We assume that there exists $g_{m}\in C_{\Phi}((s_m,t_m)\times{\mathbb R})$ such that
\begin{equation}\label{eq3.8}
 \|\psi^m\|_{C_{\Phi}((s_m,t_m)\times{\mathbb R})}+\|\psi^m_x\|_{C_{\Phi}((s_m,t_m)\times{\mathbb R})}+ \|\psi^m_{xx}\|_{C_{\Phi}((s_m,t_m)\times{\mathbb R})}=1,
\end{equation}
and
\begin{equation}\label{eq8}
  \|g_{m}\|_{C_{\Phi}((s_m,t_m)\times{\mathbb R})}\rightarrow 0
\qquad \text{as}\ m\rightarrow+\infty,
\end{equation}
where $\psi^m$ is the solution of (\ref{eq3.5}) and (\ref{eq3.6}) with $g=g_m$, $T=s_m$ and $s=t_m$.
\\

\textbf{Assertion}: \emph{For any number $R>0$, we have that
\begin{equation}\label{eq3.9}
\lim_{m\rightarrow\infty}\left\{\left\|\frac{\psi^m}{\Phi}\right\|_{L^\infty\left(A^{(s_m,t_m)}_{j,R}\right)}+\left\|\frac{\psi^m_x}{\Phi}\right\|_{L^\infty\left(A^{(s_m,t_m)}_{j,R}\right)}
+\left\|\frac{\psi^m_{xx}}{\Phi}\right\|_{L^\infty\left(A^{(s_m,t_m)}_{j,R}\right)}\right\}=0,
\end{equation}
for any $j=1,\cdots,k$, where $\Phi(t,x)$ is given by $(\ref{dphi})$ and $A^{(s_m,t_m)}_{j,R}$ is defined as
$$A^{(s_m,t_m)}_{j,R}:=\Big\{(\tau,x)\in (s_m,t_m)\times {\mathbb R}: \ \abs{x-\gamma^0_j(\tau)}<R+1\Big\},$$
and these functions $\gamma^0_j$ with $j=1,\cdots,k$ are defined by \eqref{degama0}.}
\qed
\\

The proof of (\ref{eq3.9}) will be postponed to Section \ref{section3.2.2}.
We first accept the validity of (\ref{eq3.9}).
By \eqref{eq3.8}, we have that the following three situations at least one holds,
 $$\|\psi^m\|_{C_{\Phi}((s_m,t_m)\times{\mathbb R})}\geq\frac{1}{3}\quad \text{or}\quad \|\psi^m_x\|_{C_{\Phi}((s_m,t_m)\times{\mathbb R})}\geq\frac{1}{3}\quad \text{or}\quad
\|\psi^m_{xx}\|_{C_{\Phi}((s_m,t_m)\times{\mathbb R})}\geq\frac{1}{3}.$$
Without loss of generality, we assume that $\|\psi^m\|_{C_{\Phi}((s_m,t_m)\times{\mathbb R})}\geq\frac{1}{3}$. For the other two situations, the following
arguments can be adapted in a simple way.

According to the definition of $C_{\Phi}((s_m,t_m)\times{\mathbb R})$ in \eqref{eq3.3}, we derive that
\begin{equation*}
  \|\psi^m\|_{C_{\Phi}((s_m,t_m)\times{\mathbb R})}=\sup_{(t,x)\in(s_m,t_m)\times{\mathbb R}}\frac{\abs{\psi^i(t,x)}}{\Phi(t,x)}\geq\frac{1}{3}.
\end{equation*}
Thus there exists $(\bar{t}_m,\bar{x}_m)\in (s_m,t_m)\times{\mathbb R}$ such that
\begin{equation*}
 \frac{\abs{\psi^m(\bar{t}_m,\bar{x}_m)}}{\Phi(\bar{t}_m,\bar{x}_m)}\geq\frac{1}{4}.
\end{equation*}
Furthermore, by (\ref{eq3.9}), we have
\begin{equation}\label{j1}
  \lim_{m\rightarrow +\infty}\abs{\gamma^0_{j}(\bar{t}_m)-\bar{x}_m}=+\infty\qquad \text{for any}\ j=1,\cdots,k.
\end{equation}

Recall that
$$
{\mathbb R}=\cup_{j=1}^{k}\Big{(}\frac{\gamma^0_{j-1}+\gamma^0_{j}}{2},\ \frac{\gamma^0_{j+1}+\gamma^0_{j}}{2}\Big{]}
$$
with the conventions $\gamma^0_0(t)=-\infty$ and $\gamma^0_{k+1}(t)=+\infty$.
Hence for each $m$, there exists $j_m\in \{1,\cdots,k\}$ such that
$$
\bar{x}_m\in\left(\frac{\gamma^0_{j_m-1}(\bar{t}_m)+\gamma^0_{j_m}(\bar{t}_i)}{2},\ \frac{\gamma^0_{j_m}(\bar{t}_i)+\gamma^0_{j_m+1}(\bar{t}_m)}{2}\right].
$$
Let us define
\begin{equation}\label{dp}
  \phi^m(t,y):=\frac{\psi^m\big(t+\bar{t}_m,\ y+\bar{y}_m+\gamma^0_{j_m}(t+\bar{t}_m)\big)}{\Phi\big(\bar{t}_m,\ \bar{y}_m+\gamma^0_{j_m}(\bar{t}_m)\big)},
\end{equation}
where $\bar{y}_m:=\bar{x}_m-\gamma^0_{j_m}(\bar{t}_m)$.
Thus $\phi^m(t,y)$ satisfies the following problem
\begin{equation}\label{eqj.2}
\left\{
\begin{array}{l}
\phi^m_t=-\phi^m_{yyyy} \,+\,2W''\big(\bar{z}(t,y)\big)\phi^m_{yy}
\,+\, 2W'''\big(\bar{z}(t,y)\big)\bar{z}'(t,y)\phi^m_y \,-\,\big[W''(\bar{z}(t,y))\big]^2\phi^m
\\[3mm]
\hspace{1cm}\,+\,\big[W''(\bar{z}(t,y))\big]''\phi^m
\,+\,W'''\big(\bar{z}(t,y)\big)\Big(\bar{z}''(t,y) \,-\,W'(\bar{z}(t,x))\Big)\phi^m  \,+\,g^m(t,y)
\\[2mm]\hspace{1.5cm}
\text{in}\ (s_m-\bar{t}_m,t_m-\bar{t}_m]\times{\mathbb R},
\\[3mm]
\abs{\phi^m(0,0)}\geq\frac{1}{4},
\\[3mm]
\phi^m(s_m-\bar{t}_m,y)=0\qquad  \text{in}\ {\mathbb R},
 \end{array}
\right.
\end{equation}
where the functions are given by
$$
\bar{z}(t,y):=z\big(t+\bar{t}_m,y+\bar{y}_m+\gamma^0_{j_m}(t+\bar{t}_m)\big),
$$
and
$$
g^m(t,y):=\frac{g\big{(}t+\bar{t}_m,\ y+\bar{y}_m+\gamma^0_{j_m}(t+\bar{t}_m)\big{)} \,+\,\psi^m_y\partial_t\gamma^0_{j_m}(t+\bar{t}_m)}
{\Phi\big(\bar{t}_m,\ \bar{y}_m+\gamma^0_{j_m}(\bar{t}_m)\big)}.
$$
Notice that there exist two cases
\\
\noindent\textbf{Case $1$}: $\lim\limits_{m\rightarrow\infty}s_m-\bar{t}_m=-\infty$.
\\
\noindent\textbf{Case $2$}: $\lim\limits_{m\rightarrow\infty}s_m-\bar{t}_m=-\ell$ for a constant $\ell>0$.
\\
In the sequel, to obtain a contradiction and finish the proof of Lemma \ref{lem5},
the detailed process is divided into two steps, where the contradiction will be given in {\textbf{Step two}}.

\textbf{Step one.} We have that: $\phi^m\rightarrow\phi$ locally uniformly with $\abs{\phi(0,0)}>\frac{1}{4}$,  and $\phi$ satisfies
\begin{equation}\label{limeq}
\phi_{t}=-\phi_{yyyy}+2W''(1)\phi_{yy}-[W''(1)]^2\phi\qquad \text{in}\ (\bar{\ell}, 0]\times{\mathbb R},
\end{equation}
where $\bar{\ell}=-\ell$ or $-\infty$.

By the definition of $\phi^m(t,y)$ in \eqref{dp} and the assumption in \eqref{eq3.8}, we have that
\begin{align}
 \abs{\phi^m(t,y)}=&\,\abs{\frac{\psi^m(t+\bar{t}_m,\ y+\bar{y}_m+\gamma^0_{j_m}(t+\bar{t}_m))}{\Phi(\bar{t}_m,\ \bar{y}_m+\gamma^0_{j_m}(\bar{t}_m))}}
\nonumber \\[2mm]
\leq&\,\abs{\frac{\Phi(t+\bar{t}_m,\ y+\bar{y}_m+\gamma^0_{j_m}(t+\bar{t}_m))}{\Phi(\bar{t}_m,\ \bar{y}_m+\gamma^0_{j_m}(\bar{t}_m))}},
\label{esp0}
 \end{align}
for all $(t,y)\in A_l^m $ and $l=1,\cdots,k$, where the sets $ A_l^m$ are defined as
\begin{align}\label{dA}
  A_l^m:=\Bigg\{
  &(t,y):\ t\in(s_m-\bar{t}_m,\ 0]\quad \text{and}
   \\
   &y+\bar{y}_m+\gamma^0_{j_m}(t+\bar{t}_m)\in\left(\frac{\gamma^0_{l-1}(t+\bar{t}_m)+\gamma^0_{l}(t+\bar{t}_m)}{2},\ \frac{ \gamma^0_{l}(t+\bar{t}_m)+\gamma^0_{l+1}(t+\bar{t}_m)}{2}\right]\Bigg\},
   \nonumber
\end{align}
with $\gamma^0_0=-\infty$ and $\gamma_{k+1}^0=+\infty$.
Next we estimate the term on the right hand side of the above inequality \eqref{esp0}.
Recall that
$$
\bar{y}_m+\gamma^0_{j_m}(\bar{t}_m)\in\left(\frac{\gamma^0_{j_m-1}(\bar{t}_m)+\gamma^0_{j_m}(\bar{t}_m)}{2},\ \frac{\gamma^0_{j_m}(\bar{t}_i)+\gamma^0_{j_m+1}(\bar{t}_m)}{2}\right].
$$
There exist the following three situations:

\noindent$\spadesuit$
We first consider $j_m=2,\cdots,k-1$.
By the definition of $\Phi(t,x)$ in \eqref{dphi}, we have that
 \begin{align*}
 &\abs{\frac{\Phi(t+\bar{t}_m,y+\bar{y}_m+\gamma^0_{j_m}(t+\bar{t}_m))}{\Phi(\bar{t}_m,\bar{y}_m+\gamma^0_{j_m}(\bar{t}_m))}}
\\[3mm]
&=\frac{\abs{\bar{t}_m}^{\frac{3}{4}+\frac{\sigma}{8\sqrt{2}}}}
{\abs{\bar{t}_m+t}^{\frac{3}{4}+\frac{\sigma}{8\sqrt{2}}}\ }
\frac{\ \frac{1}{\big(\abs{y+\bar{y}_m+\gamma^0_{j_m}(t+\bar{t}_m)-\gamma_{l-1}^0(t+\bar{t}_m)}+1\big)^\alpha}
\ +\ \frac{1}{\big(\abs{y+\bar{y}_m+\gamma^0_{j_m}(t+\bar{t}_m)-\gamma_{l+1}^0(t+\bar{t}_m)}+1\big)^\alpha}\ }
{\frac{1}{\big(\abs{\bar{y}_m+\gamma^0_{j_m}(\bar{t}_m)-\gamma_{j_m-1}^0(\bar{t}_m)}+1\big)^\alpha}
\ +\
\frac{1}{\big(\abs{\bar{y}_m+\gamma^0_{j_m}(\bar{t}_m)-\gamma_{j_m+1}^0(\bar{t}_m)}+1\big)^\alpha}}
\\[3mm]
&\leq\frac{\abs{\bar{t}_m}^{\frac{3}{4}+\frac{\sigma}{8\sqrt{2}}}}{\abs{\bar{t}_m+t}^{\frac{3}{4}+\frac{\sigma}{8\sqrt{2}}}}
\Bigg[\frac{\left(\abs{\bar{y}_m+\gamma^0_{j_m}(\bar{t}_m)-\gamma_{j_m-1}^0(\bar{t}_m)}+1\right)^\alpha}{\left(\abs{y+\bar{y}_m+\gamma^0_{j_m}(t+\bar{t}_m)-\gamma_{l-1}^0(t+\bar{t}_m)}+1\right)^\alpha}
\\[3mm]
&\qquad\qquad+\frac{\left(\abs{\bar{y}_m+\gamma^0_{j_m}(\bar{t}_m)-\gamma_{j_m+1}^0(\bar{t}_m)}+1\right)^\alpha}{\left(\abs{y+\bar{y}_m+\gamma^0_{j_m}(t+\bar{t}_m)-\gamma_{l+1}^0(t+\bar{t}_m)}+1\right)^\alpha}
\Bigg]
\\[3mm]
&\leq C\frac{\abs{\bar{t}_m}^{\frac{3}{4}+\frac{\sigma}{8\sqrt{2}}}}{\abs{\bar{t}_m+t}^{\frac{3}{4}+\frac{\sigma}{8\sqrt{2}}}}
\frac{\big[\eta(t_m)\big]^\alpha}{\big[\eta(t+t_m)\big]^\alpha}
= C\frac{\abs{\bar{t}_m}^{\frac{3}{4}+\frac{\sigma}{8\sqrt{2}}}}{\abs{\bar{t}_m+t}^{\frac{3}{4}+\frac{\sigma}{8\sqrt{2}}}}
\frac{\big[\ln(t_m)\big]^\alpha}{\big[\ln(t+t_m)\big]^\alpha}
\\[3mm]
&\leq C\frac{\left[\abs{\bar{t}_m+t}^{\frac{3}{4}+\frac{\sigma}{8\sqrt{2}}}+\abs{t}^{\frac{3}{4}+\frac{\sigma}{8\sqrt{2}}}\right]}{\abs{\bar{t}_m+t}^{\frac{3}{4}+\frac{\sigma}{8\sqrt{2}}}}
\frac{\big[\ln(t+t_m)\big]^\alpha+\big[\ln\big(1-\frac{t}{t+t_m}\big)\big]^\alpha}{\big[\ln(t+t_m)\big]^\alpha}
\\[3mm]
&\leq C\left[1+\frac{\abs{t}^{1+\alpha}}{\abs{t+\bar{t}_m}^{1+\alpha}}\right]\qquad \text{for all}\ (t,y)\in A^m_l\ \text{with}\ l=1,\cdots,k,
 \end{align*}
 where $\alpha>0$ and $C>0$ does only depend on $\alpha$, $k$ and $\sup\limits_{1 \leq j\leq k}\abs{a_l}$.

\noindent$\spadesuit$
Here is the case of $j_m=k$.
By the definition of $\Phi(t,x)$ in \eqref{dphi} again,  we have that
\begin{align*}
 &\abs{\frac{\Phi(t+\bar{t}_m,y+\bar{y}_m+\gamma^0_{j_m}(t+\bar{t}_m))}{\Phi(\bar{t}_m,\bar{y}_m+\gamma^0_{k}(\bar{t}_m))}}
 \\[3mm]&=\frac{\abs{\bar{t}_m}^{\frac{3}{4}+\frac{\sigma}{8\sqrt{2}}}}{\abs{\bar{t}_m+t}^{\frac{3}{4}+\frac{\sigma}{8\sqrt{2}}}}
\Bigg[\frac{\big(\abs{\bar{y}_m+\gamma^0_{k}(\bar{t}_m)-\gamma_{k-1}^0(\bar{t}_m)}+1\big)^\alpha}{\left(\abs{y+\bar{y}_m+\gamma^0_{k}(t+\bar{t}_m)-\gamma_{l-1}^0(t+\bar{t}_m)}+1\right)^\alpha}
\\[3mm]&\qquad+\frac{\left(\abs{\bar{y}_m+\gamma^0_{k}(\bar{t}_m)-\gamma_{k-1}^0(\bar{t}_m)}+1\right)^\alpha}{\left(\abs{y+\bar{y}_m+\gamma^0_{k}(t+\bar{t}_m)-\gamma_{l+1}^0(t+\bar{t}_m)}+1\right)^\alpha}
\Bigg]
\\[3mm] &\left(\text{using}\ \bar{y}_m+\gamma_k^0(\bar{t}_m)-\gamma^0_{k-1}(\bar{t}_m)\geq 0\ \text{and}\ \gamma_{l-1}^0\leq\gamma_{k-1}^0\right)\\[3mm]
&\leq \frac{\abs{\bar{t}_m}^{\frac{3}{4}+\frac{\sigma}{8\sqrt{2}}}}{\abs{\bar{t}_m+t}^{\frac{3}{4}+\frac{\sigma}{8\sqrt{2}}}}
\Bigg[\frac{\big(\abs{\bar{y}_m+\gamma^0_{k}(\bar{t}_m)-\gamma_{l-1}^0(\bar{t}_m)}+1\big)^\alpha}{\left(\abs{y+\bar{y}_m+\gamma^0_{k}(t+\bar{t}_m)-\gamma_{l-1}^0(t+\bar{t}_m)}+1\right)^\alpha}
\\[3mm]&\qquad+\frac{\left(\abs{\bar{y}_m+\gamma^0_{k}(\bar{t}_m)-\gamma_{k-1}^0(\bar{t}_m)}+1\right)^\alpha}{\left(\abs{y+\bar{y}_m+\gamma^0_{k}(t+\bar{t}_m)-\gamma_{k-1}^0(t+\bar{t}_m)}+1\right)^\alpha}
\Bigg]
\\[3mm]
&\leq C\frac{\abs{\bar{t}_m}^{\frac{3}{4}+\frac{\sigma}{8\sqrt{2}}}}{\abs{\bar{t}_m+t}^{\frac{3}{4}+\frac{\sigma}{8\sqrt{2}}}}\left(\abs{y}^\alpha+1+\ln\left[\frac{\bar{t}_m}{t+\bar{t}_m}\right]\right)
\\[3mm]&\leq C\frac{\abs{t}^2}{\abs{\bar{t}_m+t}^2}\left(\abs{y}^\alpha+1\right)\qquad \text{for all}\ (t,y)\in A^m_l\ \text{with}\ l=1,\cdots,k,
\end{align*}
 where $C>0$ does only depend on $\alpha$, $k$ and $\sup\limits_{1 \leq j\leq k}\abs{a_l}$.

\noindent$\spadesuit$
The last one is $j_m=1$.
By a similar argument in the case for $j_m=k$, we have that
\begin{align*}
 &\abs{\frac{\Phi(t+\bar{t}_m,y+\bar{y}_i+\gamma^0_{j_m}(t+\bar{t}_m))}{\Phi(\bar{t}_m,\bar{y}_i+\gamma^0_{j_m}(\bar{t}_m))}}
 \\[2mm]&=\frac{\abs{\bar{t}_m}^{\frac{3}{4}+\frac{\sigma}{8\sqrt{2}}}}{\abs{\bar{t}_m+t}^{\frac{3}{4}+\frac{\sigma}{8\sqrt{2}}}}
\Bigg[\frac{\big(\abs{\bar{y}_m+\gamma^0_{1}(\bar{t}_m)-\gamma_{2}^0(\bar{t}_m)}+1\big)^\alpha}{\left(\abs{y+\bar{y}_m+\gamma^0_{1}(t+\bar{t}_m)-\gamma_{l-1}^0(t+\bar{t}_m)}+1\right)^\alpha}
\\[2mm]&\qquad+\frac{\left(\abs{\bar{y}_m+\gamma^0_{1}(\bar{t}_m)-\gamma_{2}^0(\bar{t}_m)}+1\right)^\alpha}{\left(\abs{y+\bar{y}_m+\gamma^0_{1}(t+\bar{t}_m)-\gamma_{l+1}^0(t+\bar{t}_m)}+1\right)^\alpha}
\Bigg]
\\[2mm]&\leq C\frac{\abs{t}^2}{\abs{\bar{t}_m+t}^2}\left(\abs{y}^\alpha+1\right)\qquad \text{for all}\ (t,y)\in A^m_l\ \text{with}\ l=1,\cdots,k,
\end{align*}
 for some $C>0$ does only depend on $\alpha$, $k$ and $\sup\limits_{1 \leq j\leq k}\abs{a_l}$.

Combining the above arguments, we derive that
\begin{equation}\label{esp}
  \abs{\phi^m(t,y)}\leq C\left[\frac{\abs{t}^{1+\alpha}(\abs{y}^\alpha+1)}{\abs{\bar{t}_m+t}^{1+\alpha}}+1\right],
\end{equation}
for all $(t,y)\in A_{l}^m$ given in \eqref{dA}, where $C$ is a positive constant only depending on $\alpha$ and $k$.
Similarly, we have that
\begin{equation}\label{esp1}
   \abs{\phi^m_y(t,y)}\leq C\left[\frac{\abs{t}^{1+\alpha}(\abs{y}^\alpha+1)}{\abs{\bar{t}_m+t}^{1+\alpha}}+1\right]\ \ \text{and}\ \ \abs{\phi^m_{yy}(t,y)}\leq C\left[\frac{\abs{t}^{1+\alpha}(\abs{y}^\alpha+1)}{\abs{\bar{t}_m+t}^{1+\alpha}}+1\right],
\end{equation}
for all $(t,y)\in A_{l}^m$, where $C$ is a positive constant which only depends on $\alpha$, $k$ and $\sup\limits_{1 \leq j\leq k}\abs{a_l}$.

We notice that $$\cup_{l=1}^kA_{l}^m=(s_m-\bar{t}_m,0]\times{\mathbb R}.$$
Owing to \eqref{esp} and \eqref{esp1}, we have that $\phi^m\rightarrow \phi $ in $L_{\text{loc}}^2[(-\bar{\ell},0); H^2_{\text{loc}}({\mathbb R})]$ up to a subsequence, where $\bar{\ell}=\ell$ or $\infty$.

For \textbf{Case $2$}:$\lim\limits_{m\rightarrow\infty}s_m-\bar{t}_m=-\ell$ with some constant $\ell>0$,
we have $\phi^m\rightarrow\phi$ locally uniformly, and $\phi$ satisfies
\begin{equation*}
\left\{
\begin{array}{l}
\phi_{t}=-\phi_{yyyy} \,+\, 2W''(1)\phi_{yy} \,-\, [W''(1)]^2\phi\qquad \text{in}\ (-\ell,0]\times{\mathbb R},\\[3mm]
\phi(-\ell,y)=0\qquad \text{for all}\ y\in{\mathbb R}.
\end{array}
\right.
\end{equation*}
By Proposition \ref{prop8}, the above equation has a unique solution $\phi\equiv0$.
However, according to \eqref{eqj.2}, we have that $$\abs{\phi(0,0)}=\lim\limits_{i\rightarrow\infty}\abs{\phi^m(0,0)}\geq\frac{1}{4},$$
which leads to a contradiction. Hence, we have that $\bar{\ell}=\infty$, which is \textbf{Case $1$}.

\textbf{Step two.}
{\em We claim that
\begin{align}\label{des}
\phi(t,y)\equiv0\qquad \text{for all} \ (t,y)\in(-\infty,0)\times{\mathbb R}.
\end{align}
}
 This result contradicts with $\abs{\phi(0,0)}>\frac{1}{4}$, thus we can derive that Lemma \ref{lem5} holds.

We now verify the validity of the above conclusion in \eqref{des}.
Recall that $\sqrt{W''(1)}=\sqrt{2}$, we consider the following parabolic equation
\begin{equation*}
  u_t=-u_{yyyy}+4 u_{yy}\qquad \text{for}\ (t,y)\in (0,+\infty)\times{\mathbb R}.
\end{equation*}
Using the Fourier transformation for $y$, the above equation has a formula solution
\begin{equation}\label{11}
Q(t,y)=\frac{1}{2\pi}\int_{\mathbb R} \exp\left\{-t(\abs{\xi}^4+4\abs{\xi}^2)\right\}e^{i\xi y}{\mathrm d}\xi
\end{equation}
Hence for any $\widetilde{T}>0$ and ${\tilde g}\in L^\infty\big((-\widetilde{T},0)\times{\mathbb R}\big)$, the following initial value problem
\begin{equation*}
  u_t=-u_{yyyy}+4 u_{yy}-4u+{\tilde g}(t,y)\quad \forall\,(t,y)\in (-\widetilde{T},0)\times{\mathbb R}, \quad  u(-\widetilde{T},y)=0, \quad \forall\, y\in{\mathbb R},
\end{equation*}
has a solution of the form
\begin{equation}\label{f1}
u(t,y)=\frac{1}{2\pi}\int_{0}^{t-\widetilde{T}}\int_{{\mathbb R}}e^{-4\tau}Q(\tau,x){\tilde g}(t-\tau,y-x){\mathrm d}x{\mathrm d}\tau, \qquad \forall\, (t,y)\in(-\widetilde{T},0)\times{\mathbb R}.
\end{equation}

Thus according to formula \eqref{f1} and equation \eqref{eqj.2}, we have that
\begin{equation}\label{dfs}
\phi^m(t,y)=\int_{0}^{t-s_m+\bar{t}_m}\int_{{\mathbb R}}e^{-\alpha^4\tau}Q(\tau,x){\tilde g}_m(\phi^m(t-\tau,y-x),t-\tau,y-x)
{\mathrm d}x{\mathrm d}\tau,
\end{equation}
for all $(t,y)\in(s_m-\bar{t}_m,0)\times{\mathbb R}$, where the functions ${\tilde g}_m(\phi^m,t,y)$ are defined as follows
\begin{equation*}\begin{aligned}
{\tilde g}_m(\phi^m,t,y)
:=&2\Big[W''(\bar{z}(t,y))-W''(1)\Big]\phi^m_{yy}
\,+\, 2W'''(\bar{z}(t,y))\bar{z}'(t,y)\phi^m_y
\\[2mm]
&\,-\,\Big([W''(\bar{z}(t,y))]^2-[W''(1)]^2\Big)\phi^m
\,+\, \Big[W''(\bar{z}(t,y))\Big]''\phi^m
\\[2mm]
&+W'''(\bar{z}(t,y))\Big(\bar{z}''(t,y)-W'(\bar{z}(t,x))\Big)\phi^m \,+\,g_m(t,y).
\end{aligned}\end{equation*}
Using \eqref{eq3.8}, \eqref{esp}, \eqref{esp1} and $\bar{z}(t,y)=z(t+\bar{t}_m,y+\bar{y}_i+\gamma^0_{j_m}(t+\bar{t}_m))$, as the same as the proof of Lemma \ref{lem10}, we have that

\begin{align*}
 & \abs{{\tilde g}_m(\phi^m(t,y),t,y)}
 \\
 &\leq C\left[\frac{\abs{t}^{1+\alpha}(\abs{y}^\alpha+1)}{\abs{\bar{t}_m+t}^{1+\alpha}}+1\right]
  \Big{[}\exp\big\{-\sqrt{2}/2\abs{y+\bar{y}_m}\big\}
  +\|g_{m}\|_{C_{\Phi}((s_m,t_m)\times{\mathbb R})}\Big{]}
\end{align*}
for all $(t,y)\in (s_m-\bar{t}_m,0]\times{\mathbb R}$. Hence we have that
\begin{equation}\label{12}
 \abs{{\tilde g}_m(\phi^m(t,y),t,y)}\leq
  C\left[\frac{\abs{t}^{1+\alpha}(\abs{y}^\alpha+1)}{\abs{s_m}^{1+\alpha}}+1\right]\Big{[}\frac{1}{1+\abs{y+\bar{y}_m}^{2\alpha}}
  +\|g_{m}\|_{C_{\Phi}((s_m,t_m)\times{\mathbb R})}\Big{]}
\end{equation}
  for all $(t,y)\in(s_m-\bar{t}_m,0)\times{\mathbb R}$, where $C$ only depends on $\alpha$
  and $k$.

 Moreover a straightforward shift of contour gives that there exists a positive constant $C$ such that
$$
\abs{Q(\tau,y)}\leq C \tau^{-\frac{1}{4}}\exp\big\{-\tau^{-1/4}\abs{y}\big\},
$$
or see the inequality $(3.2)$ in \cite{BKT}, where $Q(\tau,y)$ is defined in \eqref{11}.
Thus using \eqref{dfs}, \eqref{12} and the above inequality, we have that
  \begin{align}\label{13}
&\abs{\phi^m(t,y)}\leq C\int_0^{t-s_m+\bar{t}_m}\int_{{\mathbb R}}\abs{Q(\tau,x)}\abs{{\tilde g}_m(\phi^m(t-\tau,y-x),t-\tau,y-x)}\,{\mathrm d}x {\mathrm d}\tau
\nonumber\\[2mm]
&\leq C\int_0^{+\infty}\int_{{\mathbb R}}\tau^{-\frac{1}{4}}\exp\big\{-\tau^{-1/4}\abs{x}-\alpha^4\tau\big\}
\left[\frac{\abs{t-\tau}^{1+\alpha}(\abs{y-x}^\alpha+1)}{\abs{s_m}^{1+\alpha}}+1\right]
\nonumber\\[2mm]
&\qquad\qquad \qquad\times\Bigg{[}\frac{1}{1+\abs{y-x+\bar{y}_m}^{2\alpha}}
  +\|g_{m}\|_{C_{\Phi}}\Bigg{]}\,{\mathrm d}x {\mathrm d}\tau
  \nonumber\\[2mm]
  &
  \leq C\int_0^\infty\int_{\mathbb R} \exp\big\{-\abs{x}-\alpha^4\tau\big\}\left[\frac{\abs{t-\tau}^{1+\alpha}(\abs{y-\tau^{1/4}x}^\alpha+1)}{\abs{s_m}^{1+\alpha}}+1\right]
\nonumber\\[2mm]
&\qquad\qquad \qquad\times\Bigg{[}\frac{1}{1+\abs{y-\tau^{1/4}x+\bar{y}_m}^{2\alpha}}
  +\|g_{m}\|_{C_{\Phi}}\Bigg{]}\,{\mathrm d}x {\mathrm d}\tau.
\end{align}
Thus by   Lebesgue's dominated convergence theorem,  $s_m\rightarrow+\infty$, $\abs{\bar{y}_m} \rightarrow +\infty$ and
$$\|g_{m}\|_{C_{\Phi}((s_i,t_i)\times{\mathbb R})}\rightarrow 0$$
when $m$ goes to infinity in \eqref{eq8}, we can obtain \eqref{des}.

\end{proof}

\subsubsection{The proof of \textbf{Assertion}}\label{section3.2.2}
We will prove that the inequality (\ref{eq3.9}) holds by contradiction in the following four steps.

We assume that (\ref{eq3.9}) is not valid. Then there exist $j_n\in\{1,\cdots,k\}$ , a sequence $\{m_n\in \mathbb{N}\}$ with $\lim\limits_{n\rightarrow \infty} m_n=\infty$, $R>0$ and $\delta>0$ such that the following three situations at least one holds,
\begin{equation*}
  \left\|\frac{\psi^{m_n}_{yy}}{\Phi}\right\|_{L^\infty\left(A^{(s_{m_n},t_{m_n})}_{j_n,R}\right)}>2\delta,
  \quad \left\|\frac{\psi^{m_n}_y}{\Phi}\right\|_{L^\infty\left(A^{(s_{m_n},t_{m_n})}_{j_n,R}\right)}>2\delta
  \ \ \text{and}\ \
  \left\|\frac{\psi^{m_n}}{\Phi}\right\|_{L^\infty\left(A^{(s_{m_n},t_{m_n})}_{j_n,R}\right)}>2\delta.
\end{equation*}
Without loss of the generality, we assume that the first situation happens, that is   $$\left\|\frac{\psi^{m_n}_{yy}}{\Phi}\right\|_{L^\infty\left(A^{(s_{m_n},t_{m_n})}_{j_n,R}\right)}>2\delta.$$ For the other situations,
the following arguments are similar.
Thus there exists

$$
(\hat{t}_{m_n}, x_{m_n})\in A^{(s_{m_n},t_{m_n})}_{j_n,R}=\Big\{(\tau,x)\in (s_{m_n},t_{m_n})\times {\mathbb R}: \abs{x-\gamma^0_{j_n}(\tau)}<R+1\Big\}
$$
such that
\begin{equation}\label{eq3.12}
\abs{\frac{\psi^{m_n}(\hat{t}_{m_n},x_{m_n})}{\Phi(\hat{t}_{m_n},x_{m_n})}}>\delta>0.
\end{equation}

Let us introduce the following variable transformation
\begin{equation*}
  y=x-\gamma_{m_n}(t+\hat{t}_{m_n})
  \qquad \text{and}\qquad
  y_{m_n}=x_{m_n}-\gamma^0_{j_n}(\hat{t}_{m_n}),
\end{equation*}
and set
\begin{equation}\label{eq3.14}
\widehat{\psi}^{m_n}(t,y):=\frac{\psi^{m_n}\big(t+\hat{t}_{m_n},\, y+y_{m_n}+\gamma^0_{j_n}(t+\hat{t}_{m_n})\big)}
  {\Phi\big(\hat{t}_{m_n},\, y_{m_n}+\gamma^0_{j_n}(\hat{t}_{m_n})\big)}.
\end{equation}
Hence, there exists $y_0$ satisfies $\lim\limits_{n\rightarrow\infty}y_{m_n}=y_0$ and $\abs{y_0}<R+2$.
According to (\ref{eq3.5}), we have that $\widehat{\psi}^{m_n}$ satisfies the following problem
\begin{align}\label{eq3.10}
&\widehat{\psi}^{m_n}_t=-\widehat{\psi}^{m_n}_{yyyy}+2W''(\hat{z}(t,y))\widehat{\psi}^{m_n}_{yy}+2W'''(\hat{z}(t,y))\partial_y\hat{z}(t,y)\widehat{\psi}^{m_n}
_y-[W''(\hat{z}(t,y))]^2\widehat{\psi}^{m_n}
\nonumber\\[3mm] &\hspace{1.5cm}+\Big{\{}\partial_{yy}[W''(\hat{z}(t,y))]+W'''(\hat{z}(t,y))\big[\partial_{yy}\hat{z}(t,y)-W'(\hat{z}(t,x))\big]\Big{\}}\widehat{\psi}^{m_n}+\hat{g}^{m_n}(t,y) \nonumber\\[3mm]
&\hspace{1.8cm}\text{in}\ \ \Gamma^{s_{m_n},t_{m_n}},
\\[3mm]
&\abs{\widehat{\psi}^{m_n}(0,0)}\geq\delta,\qquad
\widehat{\psi}^{m_n}(s_{m_n}-\hat{t}_{m_n},y)=0\qquad  \text{in}\ {\mathbb R},
\end{align}
where the functions $\hat{z}(t,x):=z(t+\hat{t}_{m_n},y+y_{m_n}+\gamma^0_{j}(t+\hat{t}_{m_n}))$, $$\hat{g}^{m_n}(t,y):=\frac{g(t+\hat{t}_{m_n},y+y_{m_n}+\gamma^0_{j}(t+\hat{t}_{m_n}))+\partial_t\gamma^0_{j_n}(t+\hat{t}_{m_n})}{\Phi(\bar{t}_{m_n},y_{m_n}+\gamma^0_{j_n}(\hat{t}_{m_n}))}$$ and the set
$\Gamma^{s_{m_n},t_{m_n}}:=\left\{(t,y)\in (s_{m_n}-\hat{t}_{m_n},t_{m_n}-\hat{t}_{m_n})\times (-\infty,+\infty)\right\}$.
As the same as previous part, $\lim\limits_{n\rightarrow\infty}(s_{m_n}-\hat{t}_{m_n})=\hat{\ell}$, where $\hat{\ell}$ equals to $-\infty$ or a negative number.

The rest of the proof will be divided into four steps.
We will show $\hat{\ell}=-\infty$ in \textbf{Step} $\mathbf{1}$.
Then the going further analysis will lead to a contradiction in \textbf{Step} $\mathbf{4}.$

\textbf{Step} $\mathbf{1}.$
We have that $\widehat{\psi}^{m_n}\rightarrow\widehat{\psi}$ locally uniformly, which $\widehat{\psi}$ satisfies that
$\abs{\widehat{\psi}(0,0)}>\delta>0$ and the following equation
\begin{align}\label{eq3.13}
\nonumber\widehat{\psi}_t=&-\widehat{\psi}_{yyyy}+2W''(\omega(y+y_0))\widehat{\psi}_{yy}+2W'''(\omega(y+y_0))\omega'(y+y_0)\widehat{\psi}
_y-[W''(\omega(y+y_0)]^2\widehat{\psi}
\\[2mm] \nonumber &+\Big{[}W''''(\omega(y+y_0))(\omega'(y+y_0))^2+W'''(\omega(y+y_0))\omega''(y+y_0)\Big{]}\widehat{\psi},
\\[2mm]
&\ \ \  \text{in} \ (-\infty,0]\times(-\infty,+\infty).
\end{align}
In fact, according to (\ref{eq3.8}) and (\ref{eq3.14}) and the definition of $\gamma^0_j$ in \eqref{degama0}, similar as \eqref{esp}, we have that
\begin{align}
  \abs{\widehat{\psi}^{m_n}(t,y)}
  =&\abs{\frac{\psi^{m_n}(t+\hat{t}_{m_n},y+y_{m_n}+\gamma^0_{j_n}(t+\hat{t}_{m_n}))}
{\Phi(\hat{t}_{m_n},y_{m_n}+\gamma^0_{j_n}(\hat{t}_{m_n}))}}
\nonumber\\[2mm]
\leq&\left\|\psi^{m_n}\right\|_{C_{\Phi}}\abs{\frac{\Phi(t+\hat{t}_{m_n},y+y_{m_n}+\gamma^0_{j_n}(t+\hat{t}_{m_n}))}
{\Phi(\hat{t}_{m_n},y_{m_n}+\gamma^0_{j_n}(\hat{t}_{m_n}))}}
\label{eq3.15}
\\[2mm]
\leq &C\big(\alpha,k, R,\sup\limits_{1 \leq j\leq k}a_l\big) \left[\frac{\abs{t}^{1+\alpha}(\abs{y}^\alpha+1)}{\abs{\hat{t}_{m_n}+t}^{1+\alpha}}+1\right],\qquad
\forall\, (t,y)\in B^n_{l}\ \text{and}\ l=1,\cdots,k,
\nonumber
\end{align}
where the set
\begin{equation*}\begin{aligned}
B_{l}^n£º=\Big{\{}(t,y)&\in (s_{m_n}-\hat{t}_{m_n},0]\times{\mathbb R}:\ \ \gamma^0_{l-1}(t+\hat{t}_{m_n}) \leq y+y_{m_n}+\gamma^0_{j_n}(t+\hat{t}_{m_n})\leq \gamma^0_{l}(t+\hat{t}_{m_n})\Big{\}}.
\end{aligned}\end{equation*}
According to the definition of $\gamma^0_j$ in (\ref{degama0}), there holds that
\begin{equation}\label{eq3.17}
\cup_{l=1}^kB_{l}^n=(s_{m_n}-\hat{t}_{m_n},0]\times{\mathbb R}.
\end{equation}
Similarly, hence we have that
\begin{equation}\label{f4}
  \abs{\widehat{\psi}^{m_n}(t,y)}+\abs{\widehat{\psi}^{m_n}_y(t,y)}+\abs{\widehat{\psi}^{m_n}_{yy}(t,y)}\leq C\left[\frac{\abs{t}^{1+\alpha}(\abs{y}^\alpha+1)}{\abs{\hat{t}_{m_n}+t}^{1+\alpha}}+1\right] ,
\end{equation}
 for all $(t,y)\in (s_{m_n}-\hat{t}_{m_n},0]\times{\mathbb R}$, where $C$ depends on $k, \alpha,R$ and $\sup\limits_{1 \leq j\leq k}\abs{a_l}$.
Since $\hat{t}_{m_n}\rightarrow -\infty$ and $\gamma_{l}(t+\hat{t}_{m_n})\rightarrow+\infty$ as $n$ goes to infinity for any $l>\frac{k+1}{2}$, and
$\gamma_{l}(t+\hat{t}_{m_n})\rightarrow-\infty$ with $l<\frac{k+1}{2}$. Thus  by (\ref{eq3.8}), (\ref{eq3.14}),
(\ref{eq3.15}) and $y_{m_n}\rightarrow y_0$, we can get the limiting equation \eqref{eq3.13}.

If $\hat{\ell}\in (-\infty, 0)$, we have $\hat{\psi}(\hat{\ell},y)=0$. According to Proposition \ref{prop8}, we have that equation \eqref{eq3.13} has unique solution $\hat{\psi}(t,y)\equiv0$, which contradicts with $\abs{\hat{\psi}(0,0)}>0$. Hence, it only happens the case that $\hat{\ell}=-\infty$.

\textbf{Step} $\mathbf{2}.$
We will prove that the following orthogonality condition for $\widehat{\psi}$ holds.
\begin{equation}\label{eq3.16}
  \int_{{\mathbb R}}\widehat{\psi}(t,y)\omega'(y+y_0)\,{\mathrm d}y=0\qquad \text{for all} \ t\in(-\infty,0].
\end{equation}
Indeed, according to (\ref{eq3.6}) and (\ref{eq3.14}), we have that
\begin{equation*}
  0= \frac{1}{\Phi(\hat{t}_{m_n},y_{m_n}+\gamma_{j_n}(\hat{t}_{m_n}))}\int_{\mathbb R} \psi^{m_n}(t,x)\omega'(x-\gamma_{j_n}(t))\,{\mathrm d}x=\int_{{\mathbb R}}\widehat{\psi}^{m_n}(t,y)\omega'(y+y_{m_n})\,{\mathrm d}y.
\end{equation*}
According to (\ref{eq3.15}), (\ref{eq3.17}) and \eqref{asymptotic}, we can get that
\begin{equation*}\begin{aligned}
&\abs{\widehat{\psi}^{m_n}(t,y)\omega'(y+y_{m_n})}\leq C\big(k,\alpha,R, \sup_{1 \leq j\leq k}\abs{a_l}\big)\left[\frac{\abs{t}^{1+\alpha}(\abs{y}^\alpha+1)}{\abs{\hat{t}_{m_n}+t}^{1+\alpha}}+1\right]
\exp\{-\sqrt{2}\abs{y+y_{m_n}}\}.
\end{aligned}\end{equation*}
Since $t+\hat{t}_{m_n}\geq s_{m_n}$, $s_{m_n}\rightarrow+\infty$ and $y_{m_n}\rightarrow y_0$ as $n$ goes to infinity,
using Lebesgue's dominated convergence theorem, we can  obtain \eqref{eq3.16}.

\textbf{Step} $\mathbf{3}.$
In this step we will prove the following  decay of $\widehat{\psi}(t,y)$:

{\em
There exists $C=C\big(\alpha,k, R, \sup\limits_{1 \leq j\leq k}\abs{a_l}\big)>0$ such that
\begin{equation}\label{eq3.18}
  \abs{\widehat{\psi}(t,y)}\leq Ce^{-\sqrt{2}/2\abs{y}}, \qquad \forall\, (t,y)\in (-\infty,0]\times{\mathbb R}.
\end{equation}
}
In fact, for any $(t,y)\in B_{l}^n$, by the definition of $\gamma^0_j$ in \eqref{degama0},  in view of the  proof of (\ref{eq3.15}), we have
\begin{align}\label{f3}
  \nonumber\abs{\hat{g}^{m_n}(t,y)}&=\abs{\frac{g_{m_n}(t+\hat{t}_{m_n},y+y_{m_n}+\gamma_{j_n}(t+\hat{t}_{m_n}))}
{\Phi(\hat{t}_{m_n},y_{m_n}+\gamma_{j}(\hat{t}_{m_n}))}}
\\&\leq C \|g_{m_n}\|_{C_{\Phi}((s_{m_n},\bar{t}_{m_n})\times{\mathbb R})}\left[\frac{\abs{t}^{1+\alpha}(\abs{y}^\alpha+1)}{\abs{\hat{t}_{m_n}+t}^{1+\alpha}}+1\right],
\end{align}
 for all $(t,y)\in (s_{m_n}-\hat{t}_{m_n}, 0]\times{\mathbb R}$, where $C$ depends on $k,\alpha, R$ and $\sup\limits_{1 \leq j\leq k}\abs{a_l}$.

By the formula \eqref{f1}, the solution of equation \eqref{eq3.10} has the form
\begin{equation}\label{f2}
  \widehat{ \psi}^{m_n}(t,y)=\int_{0}^{t-s_{m_n}+\hat{t}_{m_n}}\left\{\int_{{\mathbb R}}e^{-4\tau}Q(\tau,x)\hat{f}^{m_n}\big{(}\hat{\psi}^{m_n}(t-\tau,y-x),t-\tau,y-x\big{)}\,{\mathrm d}x\right\}{\mathrm d}\tau,
\end{equation}
for any $(t,y)\in(s_{m_n}-\hat{t}_{m_n},0)\times{\mathbb R}$, where the function $\hat{f}^{m_n}$ is given by
\begin{equation*}\begin{aligned}
\hat{f}^{m_n}=&2\big[W''(\hat{z}(t,y))-W''(1)\big]\widehat{\psi}^{m_n}_{yy}
+2W'''(\hat{z}(t,y))\partial_y\hat{z}(t,y)\widehat{\psi}^{m_n}_y
\\[2mm]
&+\Big{\{}\partial_{yy}\big[W''(\hat{z}(t,y))\big] +W'''(\hat{z}(t,y))\big[\partial_{yy}\hat{z}(t,y)-W'(\hat{z}(t,x))\big]\Big{\}}\widehat{\psi}^{m_n}
\\[2mm]
&-\left(\big[W''(\hat{z}(t,y))\big]^2-\big[W''(1)\big]^2\right)\widehat{\psi}^{m_n}+\hat{g}^{m_n}(t,y),
\end{aligned}\end{equation*}
and $\hat{z}(t,x)=z\big(t+\hat{t}_{m_n}, y+y_{m_n}+\gamma^0_{j_n}(t+\hat{t}_{m_n})\big)$.

According to \eqref{asymptotic} and Lemmas \ref{esxi}-\ref{lem33}, \eqref{f3}, \eqref{eq3.15} and \eqref{f4}, we have that
\begin{equation*}
  \abs{\hat{f}^{m_n}}\leq C\Big{(}\|g_{m_n}\|_{C_{\Phi}((s_{m_n},\bar{t}_{m_n})\times{\mathbb R})}+\exp\big\{-2\sqrt{2}/3\abs{y+y_{m_n}}\big\}\Big{)}
  \left[\frac{\abs{t}^{1+\alpha}(\abs{y}^\alpha+1)}{\abs{\hat{t}_{m_n}+t}^{1+\alpha}}+1\right],
\end{equation*}
for all $(t,y)\in (s_{m_n}-\hat{t}_{m_n}, 0)\times{\mathbb R}$, where $C$ depends on $k,\alpha,R$ and $\sup\limits_{1 \leq j\leq k}a_l$.
Thus by \eqref{f2}, similar arguments as \eqref{13} and $\abs{y_{m_n}}\leq R+2$, we have
\begin{equation*}
  \abs{\hat{\psi}^{m_n}(t,y)}\leq C\Big{(}
\exp\big\{-(\sqrt{2}/2)\abs{y}\big\}
+\|g_{m_n}\|_{C_{\Phi}((s_{i_m},\bar{t}_{i_m})\times{\mathbb R})}\Big{)}\left[\frac{\abs{t}^{1+\alpha}(1+\abs{y}^\alpha)}{s_{m_n}}+1\right].
\end{equation*}
where $C$ depend on $\alpha,k,R$ and $\sup_{1 \leq j\leq k}a_l$. Then let $n$ goes to infinity, by \eqref{eq3.8}, we can get the desire result since $\|g_{m_n}\|_{C_{\Phi}((s_{m_n},\bar{t}_{m_n})\times{\mathbb R})}\rightarrow0$ and $s_{m_n}\rightarrow+\infty$.

\textbf{Step} $\mathbf{4}.$
To proceed further, we need  the following result, which is Lemma $3.6$ in \cite{ly}.
\begin{lem}\label{lem2}
Considering the Hilbert space
$$H=\Big{\{}\phi(y)\in H^2({\mathbb R}):\int_{{\mathbb R}}\phi(y)\omega'(y)\,{\mathrm d}y=0\Big{\}},$$
then the following inequality is valid
\begin{equation}\label{eqi}
\int_{\mathbb R}\abs{\phi''(y)-W''(\omega)\phi(y)}^2\,{\mathrm d}y\geq c\int_{\mathbb R}\abs{\phi(y)}^2\,{\mathrm d}y , \qquad \forall\, \phi\in H,
\end{equation}
where $c>0$ is an uniform constant.
\qed
\end{lem}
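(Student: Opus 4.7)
The plan is to view this as a coercivity estimate for the self-adjoint Schrödinger-type operator
\[
\mathcal{L}\phi := -\phi''+W''(\omega)\phi
\]
acting on $L^2(\mathbb{R})$ with domain $H^2(\mathbb{R})$; the desired inequality is exactly the statement that $\|\mathcal{L}\phi\|_{L^2}^2 \ge c\|\phi\|_{L^2}^2$ for $\phi \perp \omega'$. I would break the argument into three blocks: describe the kernel of $\mathcal{L}$, locate its essential spectrum, and then conclude via a spectral-gap argument.

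\emph{Kernel and positivity of $\mathcal{L}$.} Differentiating the profile equation $\omega''=W'(\omega)$ from \eqref{eqq} once yields $\mathcal{L}\omega'=0$. Since $\omega'>0$ on $\mathbb{R}$, $\omega'$ is a positive $L^2$-eigenfunction, hence (by the standard Perron--Frobenius/ODE-uniqueness argument for one-dimensional Schrödinger operators) the ground state: $\mathcal{L}\ge 0$ and $\ker\mathcal{L}=\mathrm{span}\{\omega'\}$, with $0$ a simple eigenvalue.

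\emph{Essential spectrum and spectral gap.} From \eqref{asymptotic} we have $W''(\omega(y))\to W''(\pm 1)=2$ as $y\to\pm\infty$, and the difference $W''(\omega)-2$ decays exponentially, hence is relatively compact with respect to $-\partial_{yy}$. By Weyl's theorem,
\[
\sigma_{\mathrm{ess}}(\mathcal{L})=[2,\infty).
\]
Consequently, $\sigma(\mathcal{L})\cap[0,2)$ consists only of isolated eigenvalues of finite multiplicity. Setting
\[
\mu_1 := \inf\bigl(\sigma(\mathcal{L})\setminus\{0\}\bigr) \;>\; 0,
\]
the spectral theorem gives $\langle \mathcal{L}\phi,\phi\rangle_{L^2}\ge\mu_1\|\phi\|_{L^2}^2$ for every $\phi\in H$.

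\emph{Passage to the $L^2$ bound on $\mathcal{L}\phi$.} On the orthogonal complement of $\ker\mathcal{L}$ the spectrum of $\mathcal{L}$ is contained in $[\mu_1,\infty)$, so by functional calculus
\[
\int_\mathbb{R}|\phi''-W''(\omega)\phi|^2\,\mathrm{d}y \;=\; \|\mathcal{L}\phi\|_{L^2}^2 \;\ge\; \mu_1^2\,\|\phi\|_{L^2}^2,\qquad \forall\,\phi\in H,
\]
which is \eqref{eqi} with $c=\mu_1^2$. Since this is a standard spectral identity, the only real content is verifying the gap $\mu_1>0$; with $W(s)=\tfrac14(1-s^2)^2$ the operator $\mathcal{L}$ is the Pöschl--Teller Schrödinger operator $-\partial_y^2 + 2 - 3\,\mathrm{sech}^2(y/\sqrt 2)$, whose spectrum is explicitly $\{0,\tfrac32\}\cup[2,\infty)$, so in fact one may take $c=\tfrac94$. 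Alternatively, if one prefers to avoid explicit spectral formulas, the gap can be obtained by a contradiction/compactness argument: if a sequence $\phi_n\in H$ with $\|\phi_n\|_{L^2}=1$ satisfies $\|\mathcal{L}\phi_n\|_{L^2}\to 0$, elliptic estimates yield $H^2$-bounds, a weak limit $\phi$ satisfies $\mathcal{L}\phi=0$ with $\phi\perp\omega'$ hence $\phi\equiv 0$, and the concentration-at-infinity alternative is ruled out by $W''(\omega)\to 2>0$, producing the required contradiction. The main (and only nontrivial) obstacle is thus the spectral-gap step.
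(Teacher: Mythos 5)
Your proposal is correct. Note that the paper itself does not prove this lemma at all — it simply cites Lemma $3.6$ of the companion paper \cite{ly} — so there is no in-text argument to compare against; what you have written is a complete, self-contained proof of the standard kind. The three blocks are all sound: $\mathcal{L}\omega'=0$ follows from differentiating $\omega''=W'(\omega)$, positivity of $\omega'$ identifies it as the simple ground state, Weyl's theorem gives $\sigma_{\mathrm{ess}}(\mathcal{L})=[2,\infty)$ since $W''(\omega)-2$ decays exponentially, and the passage from the spectral gap $\mu_1>0$ to the bound $\|\mathcal{L}\phi\|_{L^2}^2\ge \mu_1^2\|\phi\|_{L^2}^2$ on $\{\omega'\}^{\perp}$ is correctly done through the functional calculus rather than merely the quadratic-form estimate $\langle\mathcal{L}\phi,\phi\rangle\ge\mu_1\|\phi\|^2$ (which by itself would not suffice, since the lemma bounds $\|\mathcal{L}\phi\|^2$, not the form). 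Your explicit identification $W''(\omega)=2-3\,\mathrm{sech}^2(y/\sqrt2)$ and the resulting spectrum $\{0,\tfrac32\}\cup[2,\infty)$ are also correct, so the quantitative constant $c=\tfrac94$ is a genuine bonus over the paper's unquantified $c>0$. The only place deserving a word of care is the fallback compactness argument at the end: ruling out concentration at infinity requires a cutoff computation using $\|(-\partial_{yy}+2)u\|\ge 2\|u\|$ near infinity, which you correctly flag but do not carry out; the spectral route you give as the main argument does not need it.
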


 Multiplying (\ref{eq3.13}) by $\hat{\psi}(y)$ and integrate with respect to $y$, and using \eqref{eqi}, we have
\begin{align*}
0\,=\,&\frac{1}{2}\int_{\mathbb R} \big(\widehat{\psi}\,\big)_t^2\,{\mathrm d}y
\,+\,
\int_{\mathbb R}\abs{\widehat{\psi}_{yy}-W''\big(\omega(y+y_0)\big)\widehat{\psi}(t,y)}^2\,{\mathrm d}y
\\[2mm]
\,\geq\,
&\frac{1}{2}\frac{{\mathrm d}}{{\mathrm d}t}\int_{\mathbb R}\Big(\widehat{\psi}\Big)^2\,{\mathrm d}y
\,+\,
c\int_{\mathbb R}\abs{\widehat{\psi}(t,y)}^2\,{\mathrm d}y
\end{align*}
According to the Gronwall's inequality, we get that
\begin{equation*}
  \widetilde{a}(t)\geq \widetilde{a}(0)e^{-2ct}, \qquad \forall\, t<0,
\end{equation*}
where $c>0$ is given by \eqref{eqi} and the function $$\widetilde{a}(t):=\int_{\mathbb R}\abs{\widehat{\psi}(t,y)}^2\,{\mathrm d}y,$$ which is a contradiction since (\ref{eq3.18}). The proof is completed.\qed
\bigskip

\subsubsection{}\label{section3.2.3}
We consider the following projection problem:
\begin{equation}\label{eq3.19}
  \left\{
\begin{array}{l}
\psi_t=F'(z(t,x))[\psi]+f(t,x)-\sum\limits_{j=1}^kc_j(t)\omega'\big(x-\gamma_j(t)\big)\qquad    \text{for}\ (t,x)\in [T,s)\times{\mathbb R},\\
\psi(T,x)=0\qquad   \text{for}\ x\in{\mathbb R},
 \end{array}
\right.
\end{equation}
where $f\in C_{\Phi}((T, s)\times {\mathbb R})$ and $c_j(t)$ satisfies the following system
\begin{equation}\label{eq3.20}
  \begin{aligned}
  &\sum_{j=1}^kc_j(t)\int_{{\mathbb R}}\omega'\big(x-\gamma_j(t)\big)\omega'\big(x-\gamma_i(t)\big)\,{\mathrm d}x\\[2mm]&=
  \int_{{\mathbb R}} F'(z(t,x))[\psi]\omega'\big(x-\gamma_i(t)\big)\,{\mathrm d}x
  -\gamma_i'(t)\int_{{\mathbb R}} \psi(t,x)\omega''\big(x-\gamma_i(t)\big)\,{\mathrm d}x\\[2mm]&
  \ \ \ +\int_{{\mathbb R}}f(t,x)\omega'(x-\gamma_i(t)\,{\mathrm d}x\qquad \text{for all}\
   i=1,\cdots,k\ \text{and}\ t>T.
  \end{aligned}
\end{equation}
We note that if $\psi$ is a solution of (\ref{eq3.19}) and these $c_j(t)$ satisfy (\ref{eq3.20}), by integration by parts, then
$\psi$ satisfies the following the orthogonality conditions
\begin{equation}\label{eq3.21}
    \int_{\mathbb R} \psi(t,x)\omega'\big(x-\gamma_i(t)\big)\,{\mathrm d}x=0, \qquad \forall\, i=1,\cdots,k\ \text{and}\ t\in(T,s).
\end{equation}
For system (\ref{eq3.20}), we have the following result:
\begin{lem}\label{lem6}
Let $T>1$ big enough,  $f\in C_{\Phi}((T,s)\times {\mathbb R})$
and $\psi\in C_{\Phi}((T, s)\times {\mathbb R})$, then there exist $c_j(t)$ with $j=1,\cdots,k$ such that system $(\ref{eq3.20})$ holds.
Furthermore the following estimates are valid,
\begin{equation*}\begin{aligned}
  \abs{c_j(t)}\leq C\Bigg[\Big{(}\frac{1}{\abs{t}}\Big{)}^{\frac{5}{4}+\frac{\sigma}{8\sqrt{2}}}
\|\psi\|_{C_{\Phi}((T,s)\times{\mathbb R})}
\,+\,
\Big{(}\frac{1}{\abs{t}}\Big{)}^{\frac{3}{4}+\frac{\sigma}{8\sqrt{2}}}\|f\|_{C_{\Phi}((T, s)\times{\mathbb R})}\Bigg],
\end{aligned}
\end{equation*}
and
\begin{equation*}\begin{aligned}
 \abs{\frac{c_j(t)\omega^{(l)}\big(x-\gamma_i(t)\big)}{\Phi(t,x)}}
 \,\leq\,
 &C\frac{1}{\big[\log\abs{t}\big]^\alpha}\Bigg[\frac{1}{\abs{t}^{\frac{1}{2}}}
\|\psi\|_{C_{\Phi}((T,s)\times{\mathbb R})}
\,+\,
\|f\|_{C_{\Phi}((T,s)\times{\mathbb R})}\Bigg],\
\end{aligned}
\end{equation*}
for any $ l=1,2,3$, $i=1,\cdots,k$ and  $t\in[T,s]$, where $\sigma\in(0,\sqrt{2})$ and $\alpha>1$, $C$ is a positive constant which does not depend on $s, t, T$, $\vec{h}$ and $\psi$.
\end{lem}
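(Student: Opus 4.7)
We begin by observing that \eqref{eq3.20} has the matrix form $M(t)\vec{c}(t) = \vec{R}(t)$, where $M_{ij}(t) := \int_{\mathbb R} \omega'(x-\gamma_i(t))\omega'(x-\gamma_j(t))\,\mathrm{d}x$ and $\vec{R}(t)$ collects the three terms on the right-hand side of \eqref{eq3.20}. Using the decay \eqref{asymptotic} together with the separation $\gamma_{j+1}(t)-\gamma_j(t) \geq (2\sqrt{2})^{-1}\log t - C$ guaranteed by \eqref{degama0} and \eqref{eq2.4}, the diagonal entries all equal the fixed positive constant $c_0 := \int_{\mathbb R}(\omega')^2$, while for $i\neq j$ one has $|M_{ij}(t)| \leq C|\gamma_i-\gamma_j|\,e^{-\sqrt{2}|\gamma_i-\gamma_j|} \leq Ct^{-1/2}\log t$. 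Consequently $M(t)$ is uniformly diagonally dominant for $T$ large, its inverse is bounded uniformly in $t$, and hence $|c_j(t)| \leq C\sum_i|R_i(t)|$.

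Next I would estimate each contribution to $R_i(t)$. The key elementary observation is that, for $x$ in the support of $\omega^{(l)}(\cdot-\gamma_i)$ (i.e.\ within bounded distance of $\gamma_i$), $\Phi(t,x) \leq Ct^{-3/4-\sigma/(8\sqrt{2})}[\log t]^{-\alpha}$, because the relevant nearest points $\gamma^0_{i\pm 1}$ lie at distance of order $\log t/(2\sqrt{2})$. Using this together with exponential localization of $\omega^{(l)}(\cdot-\gamma_i)$, one obtains
\begin{equation*}
\left|\int_{\mathbb R} f\,\omega'(x-\gamma_i)\,\mathrm{d}x\right| \leq Ct^{-3/4-\sigma/(8\sqrt{2})}[\log t]^{-\alpha}\|f\|_{C_\Phi}.
\end{equation*}
Since $|\gamma'_i(t)| = O(t^{-1})$ from \eqref{degama0}-\eqref{eq2.4}, the same reasoning applied to $\psi$ and $\omega''(\cdot-\gamma_i)$ yields
\begin{equation*}
\left|\gamma'_i(t)\int_{\mathbb R}\psi\,\omega''(x-\gamma_i)\,\mathrm{d}x\right| \leq Ct^{-7/4-\sigma/(8\sqrt{2})}[\log t]^{-\alpha}\|\psi\|_{C_\Phi}.
\end{equation*}

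The main point is the estimate for $\int F'(z)[\psi]\omega'(x-\gamma_i)\,\mathrm{d}x$. A direct computation gives $F'(z)[\psi] = -L_z^2\psi + W'''(z)G(z)\psi$ with $L_z := -\partial_{xx}+W''(z)$ and $G(z) := \partial_{xx}z-W'(z)$. Using self-adjointness of $L_z$, two integrations by parts convert this integral into $\int\psi\bigl[-L_z^2\omega'(x-\gamma_i)+W'''(z)G(z)\omega'(x-\gamma_i)\bigr]\mathrm{d}x$. The identity $\omega'''=W''(\omega)\omega'$ yields
\begin{equation*}
L_z\omega'(x-\gamma_i) = \bigl[W''(z) - W''(\omega(x-\gamma_i))\bigr]\omega'(x-\gamma_i),
\end{equation*}
and the bracket is controlled by the deviation $z-(-1)^{i+1}\omega(x-\gamma_i)$, which by \eqref{asymptotic}, Lemmas \ref{esxi}-\ref{lem33}, and the bound $e^{-\sqrt{2}\eta_j(t)}=O(t^{-1/2})$ is $O(t^{-1/2})$ on the support of $\omega'_i$. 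A similar estimate for $G(z)$ near $\gamma_i$ is a by-product of the analysis already performed in the proof of Lemma \ref{lem10}. Applying $L_z$ once more preserves both the $t^{-1/2}$ factor and the exponential localization, producing $|L_z^2\omega'(x-\gamma_i)| + |W'''(z)G(z)\omega'(x-\gamma_i)| \leq Ct^{-1/2}e^{-\sigma|x-\gamma_i|}$. Integrating $\Phi\,e^{-\sigma|x-\gamma_i|}$ as in the previous paragraph then gives
\begin{equation*}
\left|\int_{\mathbb R} F'(z)[\psi]\omega'(x-\gamma_i)\,\mathrm{d}x\right| \leq Ct^{-5/4-\sigma/(8\sqrt{2})}[\log t]^{-\alpha}\|\psi\|_{C_\Phi}.
\end{equation*}

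Combining the three bounds gives the first estimate of the lemma (in fact slightly stronger, with an extra $[\log t]^{-\alpha}$ factor). For the second estimate, observe that $|\omega^{(l)}(x-\gamma_i)/\Phi(t,x)|$ is maximized where $\omega^{(l)}$ remains of unit order, which confines $x$ to a bounded neighbourhood of $\gamma_i$; there $\Phi(t,x) \gtrsim t^{-3/4-\sigma/(8\sqrt{2})}[\log t]^{-\alpha}$, so the ratio is bounded by $Ct^{3/4+\sigma/(8\sqrt{2})}[\log t]^\alpha$, while away from $\gamma_i$ the exponential decay of $\omega^{(l)}$ makes the ratio still smaller. Multiplying by the sharpened bound on $|c_j|$ (which carries the extra $[\log t]^{-\alpha}$ factor) yields the claimed estimate. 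The main difficulty lies in the fourth-order calculation for $F'(z)\omega'_i$: it relies \emph{crucially} on the fact that $\omega'(\cdot-\gamma_i)$ is approximately in the kernel of $F'(z)$, the failure being quantified by how far the ansatz $z$ departs from a single interface $\omega(\cdot-\gamma_i)$ near $\gamma_i$, for which Lemmas \ref{esxi}-\ref{lem33} provide the essential $O(t^{-1/2})$ control.
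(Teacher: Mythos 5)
Your argument follows essentially the same route as the paper's: near-diagonality of the Gram matrix $M_{ij}(t)=\int\omega'(x-\gamma_i)\omega'(x-\gamma_j)\,\mathrm{d}x$ from the $O(\log t/(2\sqrt2))$ separation of the interfaces, then integration by parts in $\int F'(z)[\psi]\,\omega'(x-\gamma_i)\,\mathrm{d}x$ exploiting that $\omega'(\cdot-\gamma_i)$ is an approximate kernel element, with the $O(t^{-1/2})$ deviation of $z$ from $(-1)^{i+1}\omega(\cdot-\gamma_i)$ supplying the extra decay, and finally the bounds $\int\Phi\,\omega'(x-\gamma_i)\,\mathrm{d}x\lesssim t^{-3/4-\sigma/(8\sqrt2)}$ and a pointwise comparison of $\omega^{(l)}(\cdot-\gamma_i)$ with $\Phi$. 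Your packaging of the integration by parts through $F'(z)=-L_z^2+W'''(z)G(z)$ and the identity $L_z\omega'(x-\gamma_i)=[W''(z)-W''(\omega(x-\gamma_i))]\omega'(x-\gamma_i)$ is a cleaner way of writing exactly the computation the paper performs in display form, and the existence of the $c_j$ and the first estimate are obtained correctly (indeed with the sharper extra factor $[\log t]^{-\alpha}$).

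The one step that does not close as written is the second estimate. By your own (correct) accounting, $\Phi(t,x)\sim t^{-3/4-\sigma/(8\sqrt2)}[\log t]^{-\alpha}$ on a bounded neighbourhood of $\gamma_i$, so $\sup_x|\omega^{(l)}(x-\gamma_i)/\Phi(t,x)|\le C\,t^{3/4+\sigma/(8\sqrt2)}[\log t]^{+\alpha}$; multiplying this by your sharpened bound $|c_j|\le C[\log t]^{-\alpha}\big[t^{-5/4-\sigma/(8\sqrt2)}\|\psi\|+t^{-3/4-\sigma/(8\sqrt2)}\|f\|\big]$ makes the two logarithmic factors cancel and yields only $C\big[t^{-1/2}\|\psi\|+\|f\|\big]$, i.e.\ the stated inequality \emph{without} the $[\log|t|]^{-\alpha}$ prefactor. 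So the closing sentence "yields the claimed estimate" is a non sequitur: you are short by a factor $[\log t]^{\alpha}$. For comparison, the paper reaches the stated form by invoking $|\omega'(x-\gamma_i)/\Phi(t,x)|\le C\,t^{3/4+\sigma/(8\sqrt2)}/[\log|t|]^{\alpha}$, i.e.\ with the logarithm in the denominator, which is inconsistent with the evaluation of $\Phi$ at $x=\gamma_i$ that you carried out; your version of the ratio bound is the correct one. You should therefore either flag this discrepancy explicitly and record only the weaker bound $C\big[t^{-1/2}\|\psi\|+\|f\|\big]$ (which is what your chain of inequalities actually proves, and which still suffices for the contraction argument in Lemma~\ref{lem7} after a harmless adjustment of constants), or explain where an additional $[\log t]^{-\alpha}$ is to be found — as the estimates stand, it is not there.
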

\begin{proof}  We first consider the left side of system $(\ref{eq3.20})$. Set
\begin{equation*}
  d_{ij}(t):=\int_{{\mathbb R}}\omega'\big(x-\gamma_j(t)\big)\omega'\big(x-\gamma_i(t)\big)\,{\mathrm d}x \qquad \text{with}\ i,j=1,\cdots,k.
\end{equation*}
Then by \eqref{asymptotic}, we have
\begin{equation*}
\begin{aligned}
d_{ij}(t)&=d_{ji}(t)=
\int_{{\mathbb R}}\omega'(x)\omega'\big(x+\gamma_j(t)-\gamma_i(t)\big)\,{\mathrm d}x=
\left\{
\begin{array}{l}
\int_{{\mathbb R}}[\omega'(x)]^2\,{\mathrm d}x\qquad   \text{if}\ i=j,\\[3mm]
O\Big{(}e^{-\sqrt{2}/2\abs{\gamma_j(t)-\gamma_i(t)}}\Big{)}\qquad  \text{if}\ i\neq j.
 \end{array}
\right.
\end{aligned}
\end{equation*}
Hence system $(\ref{eq3.20})$ is nearly diagonal if we choose $T$ large enough.

By the definition of $\Phi$ in (\ref{dphi}) and $\alpha>1$,  we have the following fact that
\begin{equation}\label{eq3.22}\begin{aligned}
  \int_{\mathbb R}\Phi(t,x)\,{\mathrm d}x
  \leq C\frac{\log\abs{t}}{t^{\frac{3}{4}+\frac{\sigma}{8\sqrt{2}}}}
  \qquad \text{and}\qquad
  \int_{\mathbb R}\Phi(t,x)\omega'\big(x-\gamma_j(t)\big)\,{\mathrm d}x
  \leq C\frac{1}{t^{\frac{3}{4}+\frac{\sigma}{8\sqrt{2}}}},
\end{aligned}\end{equation}
for all $j=1,2,\cdots,k$, where $\sigma\in(0,\sqrt{2})$.

Moreover, using integration by parts,  we have that
\begin{align*}
&\int_{{\mathbb R}} F'(z(t,x))[\psi]\omega'\big(x-\gamma_i(t)\big)\,{\mathrm d}x
\\[2mm]&=-\int_{{\mathbb R}}W''(z(t,x))\psi\Big{[}W''(z(t,x))-W''\big(\omega\big(x-\gamma_i(t)\big)\big)\Big{]}\omega'\big(x-\gamma_i(t)\big)\,{\mathrm d}x\\[2mm]&
\ \ \ +\int_{{\mathbb R}}\psi \partial_{xx}\Big{\{}W''(z(t,x))\Big{[}W''(z(t,x))-W''\big(\omega\big(x-\gamma_i(t)\big)\big)\Big{]}\omega'\big(x-\gamma_i(t)\big)\Big{\}}\,{\mathrm d}x
\\[2mm]&\ \ \ +\int_{{\mathbb R}}W'''(z(t,x))\Big{[}\partial_{xx}z(t,x)-W'(z(t,x))\Big{]}\psi\omega'\big(x-\gamma_i(t)\big)\,{\mathrm d}x.
\end{align*}
Thus according to Taylor expansion and \eqref{eq3.22},  Lemmas \ref{esxi}-\ref{lem33}, we can show that
\begin{align}\label{eq3.24}
  \nonumber&\abs{\int_{{\mathbb R}} F'(z(t,x))[\psi]\omega'\big(x-\gamma_i(t)\big)\,{\mathrm d}x}
  \\[2mm]
  &\nonumber
\leq C\|\psi\|_{C_\Phi((T,s)\times{\mathbb R})}
\Bigg{\{}\int_{\mathbb R}\Phi(t,y+\gamma_i)\omega'(x)\sum_{j\neq i}\Big{(}\abs{\omega''(y+\gamma_i-\gamma_j)}+\abs{\omega'''(x+\gamma_i-\gamma_j)}\Big{)}\,{\mathrm d}x
\\[2mm]
&\nonumber\qquad
+\int_{\mathbb R}\Phi(t,y+\gamma_i)\omega'(x)\Bigg[\sum_{j< i}\Big{|}\omega(x+\gamma_i-\gamma_j)-1\Big{|}
+\sum_{l=1}^{k-1}\sum_{m=0}^2\abs{\partial_x^m\xi_l(t,x+\gamma_i)}\Bigg]\,{\mathrm d}x
\\[2mm]
&\nonumber\qquad+\int_{\mathbb R}\Phi(t,y+\gamma_i)\omega'(x)\Bigg[\sum_{j>i}\Big{|}\omega(x+\gamma_i-\gamma_j)+1\Big{|}
+\sum_{l=1}^{k}\sum_{m=0}^2\abs{\partial_x^m\widehat{\xi}_l(t,x+\gamma_i)}\Bigg]\,{\mathrm d}x\Bigg{\}}
\\[2mm]
&\nonumber
\leq C\|\psi\|_{C_\Phi((T,s)\times{\mathbb R})}\,\frac{1}{\abs{t}^{\frac{1}{2}}}\,\int_{\mathbb R}\Phi(t,y+\gamma_i)\omega'(x)\,{\mathrm d}x
\\ & \leq C\|\psi\|_{C_\Phi((T,s)\times{\mathbb R})}\,\frac{1}{\abs{t}^{\frac{5}{4}+\frac{\sigma}{8\sqrt{2}}}},
\end{align}
where $C$ is a positive constant which does not depend on $s, t$ and $T$.

Similarly, we have that
\begin{equation*}
  \begin{aligned}
  \abs{\gamma_i'(t)\int_{{\mathbb R}} \psi(t,x)\omega''\big(x-\gamma_i(t)\big)\,{\mathrm d}x}\leq C\|\psi\|_{C_\Phi((T,s)\times{\mathbb R})}\,
\Big{(}\frac{1}{\abs{t}}\Big{)}^{{\frac{7}{4}+\frac{\sigma}{8\sqrt{2}}}}
  \end{aligned}
\end{equation*}
and
\begin{equation*}
  \abs{\int_{{\mathbb R}}f(t,x)\omega'\big(x-\gamma_i(t)\big)\,{\mathrm d}x}\leq C\|f\|_{C_\Phi((T,s)\times{\mathbb R})}\,
\Big{(}\frac{1}{\abs{t}}\Big{)}^{{\frac{3}{4}+\frac{\sigma}{8\sqrt{2}}}}.
\end{equation*}
According to (\ref{eq3.20}) and the above estimates, we can get the first estimate of the lemma.

Using the following facts
\begin{equation*}
 \abs{\partial_x^l\omega(x)}\leq C\abs{\omega'(x)}, \ \ \ l=1,2,3,
 \qquad \text{and}\qquad
 \abs{\frac{\omega'\big(x-\gamma_i(t)\big)}{\Phi(t,x)}}\leq C\frac{\abs{t}^{\frac{3}{4}+\frac{\sigma}{8\sqrt{2}}}}{\big[\log\abs{t}\big]^\alpha},
\end{equation*}
for all $(t,x)\in (T,s)\times {\mathbb R}$, where $C$ is a constant which does not depend on $t$, $s$ and $T$, where $\Phi$ is defined by \eqref{dphi}. Thus we can get the second estimate.
\end{proof}

According to the above Lemma \ref{lem6}, we have
\begin{lem}\label{lem7}
 There exists a uniform constant $T_0>0$ such that for any $T>T_0$, and $f\in C_{\Phi}((T,s)\times{\mathbb R})$,
 there exists a unique solution $\psi^s$ of the problem $(\ref{eq3.19})$. Moreover,  $\psi^s$ satisfies the orthogonality condition $(\ref{eq3.21})$
with $t\in(T,s)$, and
\begin{equation}\label{eq3.27}
  \|\psi^T_{xx}\|_{C_{\Phi}((T,t)\times{\mathbb R})}+ \|\psi^T_x\|_{C_{\Phi}((s,t)\times{\mathbb R})}+ \|\psi^T\|_{C_{\Phi}((T,t)\times{\mathbb R})}\leq C\|f\|_{C_{\Phi}((T,t)\times{\mathbb R})},
\end{equation}
where $C$ is a uniform constant which does not depend on $T, t$.
\end{lem}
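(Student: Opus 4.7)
The plan is to construct $\psi^s$ by combining the linear parabolic theory of Proposition~\ref{prop8} with the algebraic selection of the multipliers $c_j(t)$ from (\ref{eq3.20}), and then to extract the uniform-in-$s$ bound (\ref{eq3.27}) from the a priori estimate of Lemma~\ref{lem5}. Concretely, I would set up a fixed-point iteration in the Banach space $C_\Phi((T,s)\times\R)$: given a candidate $\psi^{(n)}$, use Lemma~\ref{lem6} (whose near-diagonality of the matrix $(d_{ij})$ for $T$ large makes the system invertible) to compute $c_j^{(n)}(t)$ via (\ref{eq3.20}) with $\psi=\psi^{(n)}$, and then let $\psi^{(n+1)}$ be the unique solution, provided by Proposition~\ref{prop8}, of the auxiliary linear Cauchy problem
\begin{equation*}
\psi_t=F'(z(t,x))[\psi]+f(t,x)-\sum_{j=1}^k c_j^{(n)}(t)\,\omega'(x-\gamma_j(t)),\qquad \psi(T,\cdot)=0.
\end{equation*}
The coefficients appearing in $F'(z)[\,\cdot\,]$ are bounded and smooth in $x$ on any compact time interval, and the forcing is bounded there, so Proposition~\ref{prop8} applies. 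The second estimate of Lemma~\ref{lem6} makes the map $\psi^{(n)}\mapsto \psi^{(n+1)}$ Lipschitz in $C_\Phi$ with a small constant on short time slabs, yielding a contraction; iterating slab by slab produces a solution $\psi^s$ on the whole of $[T,s]$.

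Next I would show that this $\psi^s$ automatically satisfies the orthogonality condition (\ref{eq3.21}). Define $\beta_i(t):=\int_\R \psi^s(t,x)\,\omega'(x-\gamma_i(t))\,\mathrm{d}x$. Differentiation in $t$, substitution of (\ref{eq3.19}), and use of the very definition (\ref{eq3.20}) of the $c_j$'s cause every term to cancel except $\beta_i'(t)$ itself, giving $\beta_i'(t)\equiv 0$; since $\beta_i(T)=0$ from the zero initial datum, the orthogonality holds throughout $(T,s)$.

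With orthogonality in hand, Lemma~\ref{lem5} applies directly to $\psi^s$ on $(T,s)\times\R$ and yields
\begin{equation*}
\|\psi^s_{xx}\|_{C_\Phi}+\|\psi^s_x\|_{C_\Phi}+\|\psi^s\|_{C_\Phi}\,\leq\, C\Big\|f-\sum_{j=1}^k c_j(t)\omega'(\,\cdot-\gamma_j(t))\Big\|_{C_\Phi}.
\end{equation*}
The second estimate of Lemma~\ref{lem6} bounds the projection contribution by $\|\psi^s\|_{C_\Phi}$ times a factor of order $[\log|t|]^{-\alpha}|t|^{-1/2}$, which tends to zero as $T\to\infty$; by choosing $T_0$ sufficiently large this term can be absorbed into the left-hand side, leaving exactly (\ref{eq3.27}). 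Uniqueness follows from applying the same a priori bound to the difference of two solutions. The principal difficulty in this proof is not the construction itself — which is standard linear parabolic machinery — but the uniformity in $s$: Proposition~\ref{prop8} only gives local-in-time bounds growing with $s-T$, so the global estimate must come from the delicate blow-up/contradiction argument of Lemma~\ref{lem5} (whose justification occupied Section~\ref{section3.2.2}). Without that nonlocal a priori estimate, the iteration would close only on short slabs, and the bound would deteriorate as $s\to\infty$.
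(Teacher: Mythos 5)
Your proposal follows essentially the same route as the paper's proof: a Banach fixed-point argument on unit time slabs for the map $\psi\mapsto \mathbf{T}^T\big(f-\sum_j c_j(t)\,\omega'(x-\gamma_j(t))\big)$, with Lemma~\ref{lem6} supplying both the smallness needed for the contraction and the absorption of the multiplier term, the orthogonality (\ref{eq3.21}) recovered from $\frac{\mathrm{d}}{\mathrm{d}t}\int_{{\mathbb R}}\psi\,\omega'\big(x-\gamma_i(t)\big)\,{\mathrm d}x=0$, and the uniform-in-$s$ bound (\ref{eq3.27}) coming from the a priori estimate of Lemma~\ref{lem5}, exactly as in the paper. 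The one step you leave implicit is the boundedness of the slab solution operator from $C_{\Phi}$ to $C_{\Phi}$ (the paper's (\ref{eq3.26})): this does not follow from Proposition~\ref{prop8} alone, which controls only unweighted sup-norms, but from the pointwise biharmonic heat kernel estimate quoted from Proposition $3.1$ of \cite{ly}, which is what propagates the spatial weight $\Phi$ across the slab.
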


\begin{proof}We will use a fixed point argument to prove this lemma.
Let
$$ X^T:=\Big\{\psi\,:\, \|\psi_{xx}\|_{C_{\Phi}\big((T,T+1)\times{\mathbb R}\big)}+\|\psi_x\|_{C_{\Phi}\big((T,T+1)\times{\mathbb R}\big)}+\|\psi\|_{C_{\Phi}\big((T,T+1)\times{\mathbb R}\big)}<+\infty\Big\}.
$$
and consider the operator $\mathcal{A}^T: X^T\rightarrow X^T$ defined by
$$\mathcal{A}^T(\psi):=\mathbf{T}^T(f-C(\psi)),$$
where $\mathbf{T}^T(g)$ denotes the solution to equation \eqref{eq3.5} with $s=T+1$ and
$$
C(\psi)=\sum\limits_{l=1}^kc_l(t)\omega'(x-\gamma_l(t)),
$$
these functions $c_l(t)$($1\leq l\leq k$) satisfy
system (\ref{eq3.20}). Hence $c_l(t)$ depends on $\psi$.

 By an estimate of the biharmonic heat kernel in Proposition $3.1$ of \cite{ly}, we have that
\begin{equation}\label{eq3.26}\begin{aligned}
 &\|\mathcal{A}^T(\psi)_{xx}\|_{C_{\Phi}\big((T,T+1)\times{\mathbb R}\big)}
 \,+\,
 \|\mathcal{A}^T(\psi)_x\|_{C_{\Phi}\big((T,T+1)\times{\mathbb R}\big)}
 \,+\,
 \|\mathcal{A}^T(\psi)\|_{C_{\Phi}\big((T,T+1)\times{\mathbb R}\big)}
 \\
 &\leq C_0\|f-C(\psi)\|_{C_{\Phi}((s,s+1)\times{\mathbb R})},
\end{aligned}\end{equation}
for some uniform constant $C_0>0$.
Set
$$
c:=C_0\|f\|_{C_{\Phi}\big((T,T+1)\times{\mathbb R}\big)}
$$
and
$$
X^T_c:=\big\{\psi\,:\, \|\psi_{xx}\|_{C_{\Phi}\big((T,T+1)\times{\mathbb R}\big)}
\,+\,
\|\psi_x\|_{C_{\Phi}\big((T,T+1)\times{\mathbb R}\big)}
\,+\,
\|\psi\|_{C_{\Phi}\big((T,T+1)\times{\mathbb R}\big)}<2c \big\},
$$
where $C_0$ is given by (\ref{eq3.26}).

By (\ref{eq3.26}) and Lemma \ref{lem6}, we have
\begin{align*}
&\|\mathcal{A}^T(\psi)_{xx}\|_{C_{\Phi}\big((T,T+1)\times{\mathbb R}\big)}
\,+\,
\|\mathcal{A}^T(\psi)_x\|_{C_{\Phi}\big((T,T+1)\times{\mathbb R}\big)}
\,+\,
\|\mathcal{A}^T(\psi)\|_{C_{\Phi}\big((T,T+1)\times{\mathbb R}\big)}
\\[2mm]
&\leq  C_0\Big{(}\|C(\psi)\|_{C_{\Phi}\big((T,T+1)\times{\mathbb R}\big))}
\,+\,
\|f\|_{C_{\Phi}\big((T,T+1)\times{\mathbb R}\big)}\Big{)}
\\[2mm]
&\leq CC_0\Big{(}\frac{1}{\sqrt{\abs{T+1}}}\|\psi\|_{C_{\Phi}\big((T,T+1)\times{\mathbb R}\big)}
\,+\,
\frac{1}{\log{\abs{T+1}}}\|f\|_{C_{\Phi}\big((T,T+1)\times{\mathbb R}\big)}\Big{)}+c,
\end{align*}
where $C$ is given by Lemma \ref{lem6}.
Hence, taking $T$ large enough, we obtain that
$$
\mathcal{A}^T(X^T_c)\subseteq X^T_c.
$$
For any $\psi_1, \psi_2\in X^T_c$, by (\ref{eq3.26}) and Lemma \ref{lem6} again, we have
\begin{align*}
\|\mathcal{A}^T(\psi_1)-\mathcal{A}^T(\psi_2)\|_{C_{\Phi}\big((T,T+1)\times{\mathbb R}\big)}
\leq &\, C_0\|C(\psi_1)-C(\psi_2)\|_{C_{\Phi}\big((T,T+1)\times{\mathbb R}\big)}
\\[2mm]
\leq &\, C_0\|C(\psi_1-\psi_2)\|_{C_{\Phi}\big((T,T+1)\times{\mathbb R}\big)}
\\[2mm]
\leq&\,\frac{CC_0}{\sqrt{\abs{T+1}}}\|\psi_1-\psi_2\|_{C_{\Phi}\big((T,T+1)\times{\mathbb R}\big)}.
\end{align*}
Similarly, we have
\begin{equation*}
  \|\mathcal{A}^T(\psi_1)_x-\mathcal{A}^T(\psi_2)_x\|_{C_{\Phi}\big((T,T+1)\times{\mathbb R}\big)}
\leq\frac{CC_0}{\sqrt{\abs{T+1}}}\|\psi_1-\psi_2\|_{C_{\Phi}\big((T,T+1)\times{\mathbb R}\big)}
\end{equation*}
and
\begin{equation*}
  \|\mathcal{A}^T(\psi_1)_{xx}-\mathcal{A}^T(\psi_2)_{xx}\|_{C_{\Phi}\big((T,T+1)\times{\mathbb R}\big)}
\leq\frac{CC_0}{\sqrt{\abs{T+1}}}\|\psi_1-\psi_2\|_{C_{\Phi}\big((T,T+1)\times{\mathbb R}\big)},
\end{equation*}
where $C$ is given by Lemma \ref{lem6}.
Hence, taking $T$ large enough, we have the operator $\mathcal{A}^T$ is a contraction map from $X^T_c$ to itself.

According to the Banach fixed point theorem, we know that there exists an unique $\psi^T\in X^T_c$ such that $\mathcal{A}^T(\psi^T)=\psi^T$,
that is a solution to equation (\ref{eq3.19}) with $s=T+1$.

We finally extend the solution $\psi^T(t,r)$ in $(T,T+1)\times{\mathbb R}$ to $(T, s)\times{\mathbb R}$, $T>T_0$,
which still satisfies the orthogonality condition (\ref{eq3.20}) and the priori estimate in Lemma \ref{lem5}. We choose $T_0$ large enough such  that $\frac{CC_0}{\sqrt{\abs{T_0}}}<1$, where $C$ is given by Lemma \ref{lem6} and $C_0$ is given by (\ref{eq3.26}).
Thus the above fixed-point argument can be repeated when $T+1\leq s$. Hence passing finite steps
of fixed-point arguments, the solution $\psi^s(t,x)$ can be extended to $[T,s)$.
Moreover the solution $\psi^T$ satisfies (\ref{eq3.27}) and the orthogonality condition.
\end{proof}

\subsubsection{\textbf{Proof  of Proposition  \ref{prop2}}}\label{section3.2.4}

We choose a sequence $T_j\rightarrow -\infty$. Let $\psi^{T_j}$ be the solution to
the problem (\ref{eq3.19}) with $T=T_j$, according to Lemma \ref{lem7}. By (\ref{eq3.27}), we can find the sequence $\{\psi^{T_j}\}$
convergence to $\psi$ (up to subsequence) locally uniformly in $(T,+\infty)\times{\mathbb R}$. Using (\ref{eq3.7}) and standard parabolic
theory we have that $\psi$ is a solution of (\ref{eq3.1}) and satisfies (\ref{eq3.4}). The proof is completed.
\qed

\section{Solving the nonlinear problem} \label{sec:nl}
In this section, we mainly solve the nonlinear problem (\ref{eq7.7})-(\ref{eq7.8}) by using a fixed-point argument.
%
According to Proposition \ref{prop2}, $\phi$ is a solution to (\ref{eq7.7})-(\ref{eq7.8}) if only if $\phi\in C_{\Phi}((T, +\infty)\times{\mathbb R}) $
 is a fixed point of the operator
\begin{equation}\label{eq4.2}
 \mathbf{ T}(\phi):=\mathcal{A}(E(t,x)+N(\phi)),
\end{equation}
where $T>0$ is large enough  and the map $\mathcal{A}$ is given by Proposition \ref{prop2}.

Let $T_1>1$. We define two spaces
\begin{equation}\label{ls}
  \Lambda_{T_1}:=\Big{\{} \vec{h} \,:\, \text{ each}\ h_i\in C^1[T_1,+\infty)\ \text{and}\ \sup_{t\geq T_1}|h_i(t)|+\sup_{t\geq T_1}\Big{(}|t||h_i'(t)|\Big{)}<1\Big{\}}
\end{equation}
with $\vec{h}=(h_1,\cdots,h_k)$ and the norm
\begin{equation}\label{lamuda}
  \|\vec{h}\|_{\Lambda_{T_1}}:=\sum_{i=1}^k\left\{\sup_{t\geq T_1}|h_i(t)|+\sup_{t\geq T_1}\Big{(}|t||h'_i(t)|\Big{)}\right\},
\end{equation}
and also
\begin{equation}\label{eq4.9}
X_{T_1}:=\left\{\phi: \|\phi\|_{X_{T_1}}\leq 2 \widehat{C}\left[T_1\right]^{-\frac{\sigma}{16\sqrt{2}}}\right\},
\end{equation}
with the norm
$$\|\phi\|_{X_{T_1}}:=\|\phi\|_{C_{\Phi}((T_1,+\infty)\times{\mathbb R})}
\,+\,
\|\phi_{x}\|_{C_{\Phi}((T_1,+\infty)\times{\mathbb R})}
\,+\,
\|\phi_{xx}\|_{C_{\Phi}((T_1,+\infty)\times{\mathbb R})},$$
where the norm $\|\cdot\|_{C_{\Phi}}$ is defined by \eqref{eq3.3} with $\Phi$ in \eqref{dphi}  and $\widehat{C}$ is a fixed positive constant.
The main result is given by the following proposition.

\begin{prop}\label{prop3}
Let $\sigma\in(0,\sqrt{2})$ and $\alpha>1$ in \eqref{dphi}.
There exists $T_1\geq 1$ depending only on $ k$, $\sigma$ and $\alpha$
such that for any given function $\vec{h}=(h_1(t),\cdots,h_k(t)) \in \Lambda_{T_1}$, there is a solution $\phi(t,x,\vec{h})$ to $\phi=\mathbf{ T}(\phi)$ with respect to $\vec{\gamma}(t)=\vec{\gamma}^0(t)+\vec{h}(t)$.
The solution $\phi(t,x,\vec{h})$ satisfies problem (\ref{eq7.7})-(\ref{eq7.8}).
Furthermore, the following estimate holds
\begin{equation}\label{eq4.3}
  \big\|\phi(t,x,\vec{h}^1)-\phi(t,x,\vec{h}^2)\big\|_{X_{T_1}}\leq C \left[T_1\right]^{-\frac{\sigma}{16\sqrt{2}}}\big\|\vec{h}^1-\vec{h}^2\big\|_{\Lambda_{T_1}},
\end{equation}
where $C$ is a positive constant which does not depend on $\vec{h}^1, \vec{h}^2$ and $T_1$.
\qed
\end{prop}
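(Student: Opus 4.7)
The plan is a standard Banach contraction argument applied to the operator $\mathbf{T}$ in \eqref{eq4.2} on the closed ball $X_{T_1}$, leveraging Proposition \ref{prop2} for the linear part, Lemma \ref{lem10} for the error bound, and an explicit quadratic estimate for $N(\phi)$. First, Proposition \ref{prop2} gives
\[
\|\mathbf{T}(\phi)\|_{X_{T_1}}\leq C_0\bigl(\|E\|_{C_\Phi((T_1,+\infty)\times\mathbb{R})}+\|N(\phi)\|_{C_\Phi((T_1,+\infty)\times\mathbb{R})}\bigr),
\]
and Lemma \ref{lem10} provides $\|E\|_{C_\Phi}\leq C\,T_1^{-\sigma/(16\sqrt{2})}$. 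The constant $\widehat{C}$ in the definition \eqref{eq4.9} of $X_{T_1}$ will be chosen as $\widehat{C}:=2CC_0$.

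Next I would estimate $N(\phi)$. Since $F$ is affine in $u_{xxxx}$ and $W$ is a quartic polynomial, the quadratic remainder $N(\phi)=F(z+\phi)-F(z)-F'(z)[\phi]$ is a polynomial expression in $(\phi,\phi_x,\phi_{xx})$ whose lowest order is two, with coefficients depending smoothly on $z$ and being uniformly bounded because $|z|,|z_x|,|z_{xx}|$ are controlled by Lemmas \ref{esxi}--\ref{lem33}. Thus
\[
|N(\phi)(t,x)|\leq C\bigl(|\phi|^2+|\phi_x|^2+|\phi||\phi_{xx}|\bigr)+C\bigl(|\phi|^3+|\phi|^2|\phi_{xx}|\bigr).
\]
Using $|\partial_x^l\phi(t,x)|\leq \|\phi\|_{X_{T_1}}\Phi(t,x)$ for $l=0,1,2$ and the uniform bound $\Phi(t,x)\leq C T_1^{-3/4-\sigma/(8\sqrt{2})}$ valid for $t\geq T_1$ from \eqref{dphi}, I obtain
\[
\|N(\phi)\|_{C_\Phi}\leq C\, T_1^{-3/4-\sigma/(8\sqrt{2})}\,\|\phi\|_{X_{T_1}}^2\bigl(1+\|\phi\|_{X_{T_1}}\bigr).
\]
For $\phi\in X_{T_1}$ this is $o\bigl(T_1^{-\sigma/(16\sqrt{2})}\bigr)$, so taking $T_1$ large gives $\mathbf{T}(X_{T_1})\subset X_{T_1}$. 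The contraction property follows similarly from
\[
N(\phi_1)-N(\phi_2)=\int_0^1\!\bigl[F'(z+\theta\phi_1+(1-\theta)\phi_2)-F'(z)\bigr][\phi_1-\phi_2]\,d\theta,
\]
which yields $\|N(\phi_1)-N(\phi_2)\|_{C_\Phi}\leq C T_1^{-3/4-\sigma/(8\sqrt{2})}(\|\phi_1\|_{X_{T_1}}+\|\phi_2\|_{X_{T_1}})\|\phi_1-\phi_2\|_{X_{T_1}}$, a contraction coefficient of order $T_1^{-3/4-3\sigma/(16\sqrt{2})}$. The Banach fixed-point theorem then produces a unique $\phi(t,x,\vec{h})\in X_{T_1}$ solving $\phi=\mathbf{T}(\phi)$, which is precisely a solution of \eqref{eq7.7}--\eqref{eq7.8}.

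The Lipschitz dependence \eqref{eq4.3} requires tracking the dependence of $z,E,F',\mathcal{A}$ on $\vec h$. Writing $\phi^i=\phi(\cdot,\vec h^i)$ and subtracting the two fixed-point equations, the difference $\phi^1-\phi^2$ satisfies a projected linear problem of the form in Proposition \ref{prop2} (with $z=z^1$) whose right-hand side is
\[
(E^1-E^2)+\bigl(F'(z^1)-F'(z^2)\bigr)[\phi^2]+\bigl(N^1(\phi^1)-N^2(\phi^1)\bigr)+\bigl(N^2(\phi^1)-N^2(\phi^2)\bigr).
\]
The bound $\|E^1-E^2\|_{C_\Phi}\leq C\,T_1^{-\sigma/(16\sqrt{2})}\|\vec h^1-\vec h^2\|_{\Lambda_{T_1}}$ is obtained by differentiating the explicit expansion of $E$ produced in the proof of Lemma \ref{lem10} with respect to $\vec\gamma$: each $\partial_{\gamma_j}$ applied to $\omega(\cdot-\gamma_j)$, to the correctors $\xi_j,\widehat\xi_j$, or to the $d_j(t)$-terms of Lemma \ref{lem33} yields a term of the same $\Phi$-order as $E$ itself, while the $h'_j$-contributions gain a factor $|t|^{-1}$ from the constraint $\|t\,h'_j\|_{L^\infty}\leq 1$ in \eqref{lamuda}. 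The term $\|(F'(z^1)-F'(z^2))[\phi^2]\|_{C_\Phi}$ is bounded by $C\|\phi^2\|_{X_{T_1}}\|\vec h^1-\vec h^2\|_{\Lambda_{T_1}}\leq C T_1^{-\sigma/(16\sqrt{2})}\|\vec h^1-\vec h^2\|_{\Lambda_{T_1}}$ using smoothness of $F'$ in its coefficient. Applying Proposition \ref{prop2} and absorbing the $\phi^1-\phi^2$ contribution from the nonlinear differences by the smallness factor $T_1^{-3/4-\sigma/(8\sqrt{2})}$ proves \eqref{eq4.3}.

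The main obstacle is the Lipschitz-in-$\vec h$ estimate of the correctors $\xi_j$, $\widehat\xi_j$ and of the error $E$, because the weight $\Phi$ has different polynomial profiles on the cells separated by the mid-points $(\gamma^0_j+\gamma^0_{j+1})/2$, and the $\vec h$-differentiation must be compatible with this cell decomposition. Once one observes that translating $\gamma_j$ by $O(\|\vec h^1-\vec h^2\|)$ moves the cell boundaries by the same amount without changing the asymptotic order of the estimates in Lemmas \ref{esxi}--\ref{lem33}, the bound propagates through the computation in the proof of Lemma \ref{lem10} to give the desired factor $T_1^{-\sigma/(16\sqrt{2})}$, and the fixed point $\phi(t,x,\vec h)$ inherits Lipschitz regularity in $\vec h$ with that small constant.
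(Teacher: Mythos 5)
The existence half of your argument (self-mapping and contraction of $\mathbf{T}$ on $X_{T_1}$, using Lemma \ref{lem10} for $E$, a quadratic bound for $N$, and Proposition \ref{prop2} for the linear solve) is essentially the paper's proof and is sound. The gap is in the Lipschitz estimate \eqref{eq4.3}. You subtract the two fixed-point equations and propose to apply Proposition \ref{prop2} (equivalently, the a priori estimate of Lemma \ref{lem5}) directly to $\phi^1-\phi^2$. That estimate is only available for functions satisfying the orthogonality conditions $\int_{\mathbb R}\psi(t,x)\,\omega'\big(x-\gamma^1_j(t)\big)\,{\mathrm d}x=0$; without them the near-kernel directions $\omega'(x-\gamma^1_j)$ are uncontrolled and the bound fails. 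Here $\phi^1$ is orthogonal to the family $\omega'(x-\gamma^1_j)$ while $\phi^2$ is orthogonal to $\omega'(x-\gamma^2_j)$, so the difference $\phi^1-\phi^2$ satisfies neither set of conditions. The paper repairs exactly this point by replacing $\phi_2$ with $\bar\phi_2=\phi_2-\sum_{j}\tilde b_j(t)\,\omega'\big(x-\gamma^1_j(t)\big)$, with $\tilde b_j$ defined by the nearly diagonal system \eqref{deb}, so that $\bar\phi=\phi_1-\bar\phi_2$ does satisfy the orthogonality conditions; one must then separately prove the smallness
$\abs{\tilde b_l(t)}+\abs{\tilde b_l'(t)}\le C\,[T_1]^{-\sigma/(16\sqrt2)}\,\|\vec h^1-\vec h^2\|_{\Lambda_{T_1}}\,t^{-3/4-\sigma/(8\sqrt2)}$
(this is the content of \eqref{eq4.11}--\eqref{eq4.14}) before the extra terms $\tilde b_j'\omega'-(\gamma^1_j)'\tilde b_j\omega''$ appearing in the equation for $\bar\phi$ can be absorbed.

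A second, related omission: the two solutions solve the \emph{projected} problem \eqref{eq7.7}, so subtracting the equations leaves the terms
$\sum_{j}c^2_j(t)\big[\omega'\big(x-\gamma^2_j(t)\big)-\omega'\big(x-\gamma^1_j(t)\big)\big]-\sum_{j}\big[c^1_j(t)-c^2_j(t)\big]\omega'\big(x-\gamma^1_j(t)\big)$
on the right-hand side, which your list of source terms drops entirely. These must be estimated in the $C_\Phi$-norm using Lemma \ref{lem6} (and \eqref{eq4.8}) together with the weight comparison $\omega'\big(x-\gamma_i(t)\big)/\Phi(t,x)\le C\,t^{3/4+\sigma/(8\sqrt2)}/[\log t]^{\alpha}$. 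Your treatment of $E^1-E^2$ and $(F'(z^1)-F'(z^2))[\phi^2]$ is in line with Lemma \ref{lem8}, but without the reorthogonalization step and the control of the projection terms the inequality you invoke is not applicable, so as written the proof of \eqref{eq4.3} does not close.
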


To prove the above proposition, we first consider the following lemmas. Notice that the error term $E(t,x)$ in (\ref{eq7.5}) and the nonlinear term $N(\phi)$  in (\ref{eq7.6}) are all depend on $\vec{h}$, due to $\vec{\gamma}=\vec{\gamma}^0+\vec{h}$. So we denote $E(t,x)$ and $N(\phi)$ by $E(t,x,\vec{h})$ and $N(\phi,\vec{h})$ respectively.
\begin{lem}\label{lem8}
Let $\vec{h}^1,\vec{h}^2\in\Lambda_{T_1}$ and $\phi_1,\phi_2\in X_{T_1}$, then there exists a constant $C$ depending on $\widehat{C}$ such that
\begin{equation}\label{eq4.4}
\begin{aligned}
 &\|N\big(\phi_1,\vec{h}^1\big)-N\big(\phi_2, \vec{h}^2\big)\|_{C_{\Phi}((T_1,+\infty)\times{\mathbb R})}\leq C\big[ T_1]^{-\frac{\sigma}{16\sqrt{2}}}\Big{\{}\|\vec{h}^1-\vec{h}^2\|_{\Lambda_{T_1}}+\|\phi_1-\phi_2\|_{X_{T_1}}\Big{\}}
\end{aligned}
\end{equation}
and
\begin{equation}\label{eq4.5}
\begin{aligned}
 \|E\big(t,x,\vec{h}^1\big)-E\big(t,x,\vec{h}^2\big)\|_{C_{\Phi}((T_1,+\infty)\times{\mathbb R})}\leq &C\big[T_1\big]^{-\frac{\sigma}{16\sqrt{2}}}\|\vec{h}^1-\vec{h}^2\|_{\Lambda_{T_1}}.
\end{aligned}
\end{equation}
\end{lem}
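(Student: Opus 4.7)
The plan is as follows. Both estimates follow from the smooth dependence of $z(t,x)$ on the parameter vector $\vec\gamma = \vec\gamma^0 + \vec h$, combined with the quadratic structure of $N$ in $\phi$ and the polynomial/exponential decay structure already recorded in Lemmas \ref{esxi}--\ref{lem33} and Lemma \ref{lem10}.

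First I would prove \eqref{eq4.5}. Since $\vec h^s := \vec h^2 + s(\vec h^1 - \vec h^2)$ lies in $\Lambda_{T_1}$ for every $s\in[0,1]$ by convexity of the unit ball \eqref{ls}, the fundamental theorem of calculus gives
\[
E(t,x,\vec h^1) - E(t,x,\vec h^2) = \int_0^1 \sum_{j=1}^k \bigl(h_j^1(t) - h_j^2(t)\bigr)\, \partial_{h_j} E(t,x,\vec h^s)\,{\mathrm d}s.
\]
Inspection of the proof of Lemma \ref{lem10} shows that it can be repeated verbatim with $E$ replaced by $\partial_{h_j}E$: each $\partial_{h_j}$ acts on $\omega(x-\gamma_j)$, on the correcting functions $\xi_i,\widehat\xi_j$ (via their dependence on $\gamma_j$ in \eqref{dxi}, \eqref{dxi1}, \eqref{dhatxi}), on $\partial_t z$, and on the coefficients in the expansion leading to \eqref{esee}, but in every case the bounds of Lemmas \ref{esxi}--\ref{lem33} are preserved. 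Consequently $|\partial_{h_j} E(t,x,\vec h)| \leq C\, t^{-\sigma/(16\sqrt 2)}\,\Phi(t,x)$ uniformly in $\vec h\in\Lambda_{T_1}$ and $j=1,\dots,k$. Dividing by $\Phi$, taking the supremum over $(t,x)\in(T_1,+\infty)\times{\mathbb R}$ and using $|h_j^1(t)-h_j^2(t)|\leq \|\vec h^1-\vec h^2\|_{\Lambda_{T_1}}$ yields \eqref{eq4.5}.

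Next I would prove \eqref{eq4.4} by splitting
\[
N(\phi_1,\vec h^1) - N(\phi_2,\vec h^2) = \underbrace{\bigl[N(\phi_1,\vec h^1) - N(\phi_2,\vec h^1)\bigr]}_{\mathrm{(I)}} + \underbrace{\bigl[N(\phi_2,\vec h^1) - N(\phi_2,\vec h^2)\bigr]}_{\mathrm{(II)}}.
\]
For $\mathrm{(I)}$, the definition \eqref{eq7.6} gives
\[
\mathrm{(I)} = \int_0^1 \bigl[F'\bigl(z+\phi_2+s(\phi_1-\phi_2)\bigr) - F'(z)\bigr](\phi_1-\phi_2)\,{\mathrm d}s,
\]
and from the explicit expression \eqref{deF'} one sees that $F'(z+\psi)[v]-F'(z)[v]$ has coefficients of the form $W''(z+\psi)-W''(z)$, $W'''(z+\psi)-W'''(z)$ acting on $v$ and its first two $x$-derivatives, so is pointwise bounded by $C(|\psi|+|\partial_x\psi|+|\partial_{xx}\psi|)(|v|+|\partial_x v|+|\partial_{xx}v|)$. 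Using $|\phi_i|,|\partial_x\phi_i|,|\partial_{xx}\phi_i|\leq \|\phi_i\|_{X_{T_1}}\,\Phi(t,x)$ (and the same for $\phi_1-\phi_2$) together with the uniform estimate $\sup_{(t,x)\in(T_1,\infty)\times{\mathbb R}}\Phi(t,x)\leq C\,T_1^{-3/4-\sigma/(8\sqrt 2)}$ and the defining bound $\|\phi_i\|_{X_{T_1}}\leq 2\widehat C\,T_1^{-\sigma/(16\sqrt 2)}$ from \eqref{eq4.9}, I obtain
\[
\|\mathrm{(I)}\|_{C_\Phi((T_1,+\infty)\times{\mathbb R})} \leq C\bigl(\|\phi_1\|_{X_{T_1}}+\|\phi_2\|_{X_{T_1}}\bigr)\,\|\phi_1-\phi_2\|_{X_{T_1}}\,\sup \Phi \leq C\,T_1^{-\sigma/(16\sqrt 2)}\|\phi_1-\phi_2\|_{X_{T_1}}.
\]
For $\mathrm{(II)}$, writing $N(\phi_2,\vec h)$ in the integral form $\int_0^1(1-s)\,\partial_s^2\bigl[F(z(\vec h)+s\phi_2)\bigr]\,{\mathrm d}s$, differentiating in $\vec h$ along the segment $\vec h^s$ and applying the coefficient estimates established in Step 1 yields
\[
\|\mathrm{(II)}\|_{C_\Phi((T_1,+\infty)\times{\mathbb R})} \leq C\,\|\phi_2\|_{X_{T_1}}^2\,\sup\Phi\,\|\vec h^1-\vec h^2\|_{\Lambda_{T_1}} \leq C\,T_1^{-\sigma/(16\sqrt 2)}\|\vec h^1-\vec h^2\|_{\Lambda_{T_1}}.
\]
Summing $\mathrm{(I)}$ and $\mathrm{(II)}$ produces \eqref{eq4.4}.

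The main obstacle lies in Step 1, namely verifying that the formal $\partial_{h_j}$-derivative of $E$ inherits the same $\Phi$-weighted bound as $E$ itself. This requires revisiting each term in the long expansion leading to \eqref{esee} and observing that the $\partial_{h_j}$-derivatives of the building blocks $\omega(\,\cdot\,-\gamma_j)$, $\xi_i$, $\widehat\xi_j$, $d_i$ are controlled by the bounds in Lemmas \ref{esxi}--\ref{lem33}, while the potentially dangerous contribution arising from $\partial_{h_j}\partial_t z$ is tamed by the pointwise bound $|h_j'(t)|\leq 1/|t|$ encoded in the definition \eqref{lamuda} of $\Lambda_{T_1}$, which compensates for the extra time derivative and preserves the $t^{-\sigma/(16\sqrt 2)}$ decay.
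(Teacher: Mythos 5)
Your proposal is correct and follows essentially the same route as the paper: the same splitting of $N(\phi_1,\vec h^1)-N(\phi_2,\vec h^2)$ into a $\phi$-difference (handled via the second Fr\'echet derivative of $F$ along the segment joining $\phi_1$ and $\phi_2$, together with the smallness of the $X_{T_1}$ ball from \eqref{eq4.9}) and an $\vec h$-difference (handled via $|z^1-z^2|\le C\|\vec h^1-\vec h^2\|_{\Lambda_{T_1}}$), and the same term-by-term differencing of $\partial_t z$ and $F(z)$ for \eqref{eq4.5}. The only imprecision is that your fundamental-theorem-of-calculus display for $E$ should also carry the terms coming from the dependence of $E$ on $\vec h'$ through $\partial_t z$; you acknowledge this informally at the end, and the $\Lambda_{T_1}$ norm controls $|t||h_j'|$ exactly as needed, matching the paper's own estimate of $|\partial_t z^1-\partial_t z^2|$.
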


\begin{proof}
Recall that
$$
N(\phi,h)=F(z(t,x)+\phi)-F(z(t,x))-F'(z(t,x))[\phi]
$$
in (\ref{eq7.6}) with $F(u)$ and $F'(u)[v]$ are given by \eqref{deF} and \eqref{deF'} respectively, then we have
\begin{align*}
&\abs{N(\phi_1,\vec{h})-N(\phi_2,\vec{h})}
\\[2mm]
&=\abs{\int_0^1F'\big(z(t,x)+y\phi_1+(1-y)\phi_2\big)
\big[\phi_1-\phi_2\big]\,{\mathrm d}y
\,-\,
F'(z(t,x))[\phi_1-\phi_2]}
\\[2mm]
&=\abs{\int_0^1\Big{\{}\widetilde{F}'\big(z(t,x)+y\phi_1+(1-y)\phi_2\big)\big[\phi_1-\phi_2\big]
-\widetilde{F}'(z(t,x))[\phi_1-\phi_2]\Big{\}}\,{\mathrm d}y}
\\[2mm]
&=\abs{\int_0^1\int_0^y\Big{\{}\widetilde{F}''\big(z(t,x)+s\phi_1+(1-s)\phi_2\big)
\big[\phi_1-\phi_2,\,\phi_1-\phi_2\big]\Big{\}}
\,{\mathrm d}s{\mathrm d}y}
\\[2mm]
& \leq C\big[\Phi(t,x)\big]^2\Big{\{}\left\|\phi_1-\phi_2\right\|_{C_\Phi}
\,+\,
\left\|(\phi_1-\phi_2)_{x}\right\|_{C_\Phi}
\,+\,
\left\|(\phi_1-\phi_2)_{xx}\right\|_{C_\Phi}\Big{\}}^2,
\end{align*}
due to the fact that $\Phi(t,x)$ is bounded for $(t,x)\in (T_1,+\infty)\times{\mathbb R}$ in \eqref{dphi}.
In the above,
$$
\widetilde{F}'(u)[v]=F'(u)[v]+v^{(4)}
$$
and $\widetilde{F}''(u)[v_1,v_2]$ denotes the Frechet derivative of $\widetilde{F}'$.
Moreover, we have that

\begin{align*}
\abs{N(\phi,h^1)-N(\phi,h^2)}&\leq\abs{\int_0^{1}\int_0^1F''\big(sz^1(t,x)+(1-s)z^2(t,x)+y\phi\big)[z^1-z^2,\phi]
\,{\mathrm d}s{\mathrm d}y}
\\[2mm]
&\qquad +\abs{\int_0^1F''\big(s z^1(t,x)+(1-s)z^2(t,x)\big)[z^1-z^2,\phi]\,{\mathrm d}s}
\\[2mm]
&\leq C\left\{\sum_{l=1}^k\Big[\abs{h^1_l-h^2_l}+\abs{t}\abs{(h^1_l)'-(h^2_l)'}\Big]\right\}\abs{\phi}
\\[2mm]
&\leq C\left\{\sum_{l=1}^k\Big[\abs{h^1_l-h^2_l}+\abs{t}\abs{(h^1_l)'-(h^2_l)'}\Big]\right\}\Phi(t,x)\|\phi\|_{C_\Phi},
\end{align*}
where $z^i(t,x)$ is defined in \eqref{dzz} with $\vec{h}=\vec{h}^i$ for
$i=1,2$,  and $C$ only depends on $\widehat{C}$.
Combining the above estimates, we can obtain that (\ref{eq4.4}) holds.

On the other hand, recall that
\begin{align*}
  E(t,x)=&-\partial_tz(t,x)-\partial_{xx}\Big{(}\partial_{xx}z(t,x)-W'(z(t,x))\Big{)}
\\
&+W''(z(t,x))\Big{(}\partial_{xx}z(t,x)-W'(z(t,x))\Big{)},
\end{align*}
where the approximate solution $z(t,x)$ is defined in \eqref{dzz}.
By the definition of $\vec{\gamma}^0$ in \eqref{degama0} and Lemmas \ref{esxi}-\ref{lem33},
we have that there exists a positive constant $C=C(k,\sigma)$ such that
\begin{align*}
&\abs{\partial_tz^1(t,x)-\partial_tz^2(t,x)}
\\[2mm]\leq
&\abs{\sum_{l=1}^k(-1)^{l+1}\partial_t\left[\omega\big(x-\gamma^1_l(t)\big)
-\omega\big(x-\gamma^2_l(t)\big)\right]}
\,+\,
\abs{\sum_{l=1}^{k-1}(-1)^{l+1}\partial_t\left[\xi^1_l(t,x)-\xi^2_l(t,x)\right]}
\\[2mm]
&\quad\,+\,\abs{\sum_{l=1}^k(-1)^{l+1}\partial_t\left[\widehat{\xi}^1_l(t,x)-\widehat{\xi}^2_l(t,x)\right]}
\\[2mm]
\leq&\sum_{l=1}^k\Big\{\abs{\big[\gamma^1_l(t)\big]_t}\abs{\omega'\big(x-\gamma^1_l(t)\big)-\omega'\big(x-\gamma^2_l(t)\big)}
\ +\ \abs{\big[\gamma^1_l(t)\big]_t-\big[\gamma^2_l(t)\big]_t}\omega'\big(x-\gamma^2_l(t)\big)\Big\}
\\[2mm]
&\quad+\sum_{l=1}^{k-1}\abs{\partial_t\left[\xi^1_l(t,x)-\xi^2_l(t,x)\right]}
\ +\ \sum_{l=1}^k\abs{\partial_t\left[\widehat{\xi}^1_l(t,x)-\widehat{\xi}^2_l(t,x)\right]}
\\[2mm]
&\leq C \Phi(t,x)\big[ T_1]^{-\frac{\sigma}{16\sqrt{2}}}\|h^1-h^2\|_{\Lambda_{T_1}}.
\end{align*}
By the fact that $\omega$ satisfies \eqref{eqq} and  similar arguments as in the proof of Lemma \ref{lem10},  we have that
\begin{align*}
&\abs{F(z^1(t,x))-F(z^2(t,x))}\\[2mm]
&=\Bigg{|}\Big[\partial_{xx}-W''(z^1(t,x))\Big]\Big{(}\partial_{xx}z^1(t,x)-W'(z^1(t,x))\Big{)}
\\[2mm]&\qquad-\Big[\partial_{xx}-W''(z^2(t,x))\Big]\Big{(}\partial_{xx}z^2(t,x)-W'(z^2(t,x))\Big{)}\Bigg{|}
\\[2mm]&\leq C\big[ T_1]^{-\frac{\sigma}{16\sqrt{2}}}\Phi(t,x)\|h^1-h^2\|_{\Lambda_{T_1}}\qquad  \text {if }\ \frac{\gamma^0_{j-1}+\gamma^0_{j}}{2}\leq x\leq \frac{\gamma^0_{j+1}+\gamma^0_{j}}{2}, \ j=1,\cdots,k,
\end{align*}
with $\gamma_0=-\infty$ and $\gamma_{k+1}=+\infty$.
Combining the above estimates, we can get that (\ref{eq4.5}) holds.
\end{proof}

\begin{lem}
Let the functions $\vec{h}^1, \vec{h}^2\in \Lambda_{T_1}$, $\ \phi_1, \phi_2\in X_{T_1}$, $\ c^l\big(\phi^l, \vec{h}^l, t\big)=\big(c^l_1(t), \cdots, c^l_k(t)\big)$ satisfy system

\begin{align*}
  &\sum_{j=1}^kc^l_j(t)\int_{{\mathbb R}}\omega'\big(x-\gamma^l_j(t)\big)\omega'\big(x-\gamma^l_i(t)\big)\,{\mathrm d}x
  \\[2mm]
  &=\int_{{\mathbb R}} F'(z^l(t,x))[\phi_l]\omega'\big(x-\gamma^l_i(t)\big)\,{\mathrm d}x
  \,-\,
  \partial_t\gamma^l_i(t)\int_{0}^\infty \phi^l(t,x)\omega''\big(x-\gamma^l_i(t)\big)\,{\mathrm d}x
 \\[2mm]
 &  \ \ \ +\int_{{\mathbb R}}\big(E^l(t,x)+N(\phi_l)\big)\omega'\big(x-\gamma^l_i(t)\big)\,{\mathrm d}x, \ \ \ \ \forall\, i=1,\cdots,k\ \text{and}\ t>T,
  \end{align*}

\noindent
where $\gamma^l_i=\gamma_i^0+h^l_i$, $z^l(t,x)$ and $E^l(t,x)$ are given by \eqref{dzz} and \eqref{eq7.5} respectively with $\vec{\gamma}=\vec{\gamma}^l$, $l=1,2$ and $i=1,\cdots,k$. Then we have
\begin{align}\label{eq4.8}
  \nonumber &\abs{c\big(\phi_1,\vec{h}^1,t\big)-c\big(\phi_2,\vec{h}^2,t\big)}
  \\[2mm]
  &\leq C\Bigg{[}t^{-{\frac{5}{4}-\frac{\sigma}{8\sqrt{2}}}}\|\phi_1-\phi_2\|_{C_{\Phi}((T_1,+\infty)\times{\mathbb R})}
  \ +\ t^{-\frac{3}{4}-\frac{3\sigma}{16\sqrt{2}}}\|\vec{h}^1-\vec{h}^2\|_{\Lambda_{T_1}}\Bigg{]},
\end{align}
where $C$ only depends on $\widehat{C}$ given in $(\ref{eq4.9})$.
\end{lem}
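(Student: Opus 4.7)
The plan is to mimic the proof of Lemma \ref{lem6}, but applied to the difference of the two linear systems defining $c^1$ and $c^2$. Writing $M^l(t)$ for the Gram matrix with entries $d_{ij}^l(t)=\int_{\mathbb R}\omega'(x-\gamma_i^l(t))\omega'(x-\gamma_j^l(t))\,\mathrm{d}x$ and $b^l(t)$ for the right-hand side vector of the system, we have $M^l c^l=b^l$, so
\begin{equation*}
c^1-c^2=(M^1)^{-1}(b^1-b^2)+\big[(M^1)^{-1}-(M^2)^{-1}\big]b^2.
\end{equation*}
As in the proof of Lemma \ref{lem6}, both $M^l$ are nearly diagonal for $T_1$ large, with diagonal entries converging to $\int_{\mathbb R}(\omega')^2\,\mathrm{d}x$ and off-diagonal entries of order $e^{-\sqrt{2}|\gamma_i-\gamma_j|/2}\lesssim t^{-1/2}$. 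A direct computation gives $|M^1_{ij}-M^2_{ij}|\lesssim \|\vec h^1-\vec h^2\|_{\Lambda_{T_1}}\cdot t^{-1/2}$, and combined with the a-priori bound $|b^2|\lesssim t^{-3/4-\sigma/(8\sqrt2)}$ from Lemma \ref{lem6}, the matrix-perturbation term contributes at most $t^{-5/4-\sigma/(8\sqrt2)}\|\vec h^1-\vec h^2\|_{\Lambda_{T_1}}$, which is already absorbed by the right-hand side of \eqref{eq4.8}.

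Next I would handle $b^1-b^2$ by splitting each of its three natural pieces, namely the $F'(z^l)[\phi_l]$ integral, the drift term $\partial_t\gamma^l_i\!\int\!\phi_l\,\omega''(\cdot-\gamma^l_i)\,\mathrm{d}x$, and the $(E^l+N(\phi_l))$ integral, through the telescoping identity $A^1B^1-A^2B^2=A^1(B^1-B^2)+(A^1-A^2)B^2$. For the $\phi$-varying piece at frozen $\vec h=\vec h^1$, the integration-by-parts computation that produced \eqref{eq3.24} in the proof of Lemma \ref{lem6} applies verbatim to $\phi_1-\phi_2$ and yields
\begin{equation*}
\Bigl|\int_{\mathbb R}F'(z^1)[\phi_1-\phi_2]\,\omega'(x-\gamma^1_i)\,\mathrm{d}x\Bigr|\leq C\,t^{-5/4-\sigma/(8\sqrt2)}\|\phi_1-\phi_2\|_{C_\Phi},
\end{equation*}
and similarly the drift piece is even smaller because of the extra $|\gamma'_i|\sim t^{-1}$ factor. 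For the $\vec h$-varying piece at frozen $\phi=\phi_2$, I would use the pointwise Lipschitz dependence of $z(t,x)$, $\omega'(x-\gamma_i(t))$ and $\omega''(x-\gamma_i(t))$ on $\vec h$, established through Lemmas \ref{esxi}--\ref{lem33}, together with the a-priori smallness $\|\phi_2\|_{X_{T_1}}\le 2\widehat C\,T_1^{-\sigma/(16\sqrt2)}$.

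For the $(E^l+N(\phi_l,\vec h^l))$-piece I would directly invoke Lemma \ref{lem8}, which furnishes
\begin{equation*}
\|N(\phi_1,\vec h^1)-N(\phi_2,\vec h^2)\|_{C_\Phi}+\|E(\cdot,\vec h^1)-E(\cdot,\vec h^2)\|_{C_\Phi}\lesssim T_1^{-\sigma/(16\sqrt2)}\bigl(\|\vec h^1-\vec h^2\|_{\Lambda_{T_1}}+\|\phi_1-\phi_2\|_{X_{T_1}}\bigr).
\end{equation*}
Pairing such a $C_\Phi$-bounded quantity against $\omega'(x-\gamma^l_i)$ integrates in $x$ to a weight $\int_{\mathbb R}\Phi(t,x)\omega'(x-\gamma_i)\,\mathrm{d}x\lesssim t^{-3/4-\sigma/(8\sqrt2)}$, exactly as in \eqref{eq3.22}. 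Observing that the proof of Lemma \ref{lem8} is pointwise in time (so the factor $T_1^{-\sigma/(16\sqrt2)}$ may be replaced by the sharper $t^{-\sigma/(16\sqrt2)}$ after rerunning the argument at the instant $t$), the $\vec h$-contribution multiplies to $t^{-3/4-\sigma/(8\sqrt2)-\sigma/(16\sqrt2)}=t^{-3/4-3\sigma/(16\sqrt2)}$, matching the claimed exponent, and the $\phi$-contribution fits within $t^{-5/4-\sigma/(8\sqrt2)}$ after an additional $t^{-1/2}$ picked up from the Gram-matrix inversion.

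The main technical obstacle is bookkeeping the exponents precisely: the asymmetric $3\sigma/(16\sqrt2)$ rather than $\sigma/(8\sqrt2)$ in the $\vec h$-term is exactly the gain inherited from Lemma \ref{lem8}, and it must not be lost when switching from the uniform-in-$t$ norm of $E^1-E^2$ to the pointwise estimate at time $t$. I would verify this by rerunning the proof of Lemma \ref{lem10} with $t$ in place of $T_1$ and tracking how the factor $t^{-\sigma/(16\sqrt2)}$ survives; the rest is routine telescoping and exponential-overlap estimates of the kind already used repeatedly in Sections \ref{sec:a} and \ref{sec:lp}.
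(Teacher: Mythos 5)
Your proposal is correct and follows exactly the route the paper intends: the paper omits this proof entirely, stating only that it requires ``similar calculations in Lemmas \ref{lem6} and \ref{lem8},'' and your argument is precisely that — the nearly-diagonal Gram-matrix inversion and projection estimates \eqref{eq3.22}--\eqref{eq3.24} from Lemma \ref{lem6} combined with the Lipschitz bounds of Lemma \ref{lem8} via a telescoping decomposition. Your observation that the factor $T_1^{-\sigma/(16\sqrt{2})}$ in Lemma \ref{lem8} can be sharpened to $t^{-\sigma/(16\sqrt{2})}$ (since the underlying estimates are pointwise in $t$) is exactly what is needed to account for the exponent $3\sigma/(16\sqrt{2})$ in \eqref{eq4.8}, so the bookkeeping checks out.
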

\begin{proof}
Proving this lemma just needs to do some similar calculations in Lemmas \ref{lem6} and \ref{lem8}, we omit it here.
\end{proof}

\textbf{Proof of Proposition \ref{prop3} } We consider the operator $\mathbf{T}$ defined by (\ref{eq4.2}) from the space
$X_{T_1}$ in (\ref{eq4.9}) to itself. We will prove that $\mathbf{T}$ is a contraction mapping.
Thus by fixed-point theorem,
 the operator $\mathbf{T}$ has a unique fixed point $\phi$,  i.e. $\mathbf{T}(\phi)=\phi$.

For any $\phi_1,\phi_2\in X_{T_1}$, according to Lemmas \ref{lem10} and \ref{lem8}, Proposition \ref{prop2}, we can find that
\begin{equation*}
\|  \mathbf{T}(0)\|_{X_{T_1}}\leq \widehat{C}\left[T_1\right]^{-\frac{\sigma}{16\sqrt{2}}},
\end{equation*}
and
\begin{equation*}
\|  \mathbf{T}(\phi_1)-\mathbf{T}(\phi_2)\|_{X_{T_1}}\leq C\left[T_1\right]^{-\frac{\sigma}{16\sqrt{2}}}
\| \phi_1-\phi_2\|_{X_{T_1}},
\end{equation*}
providing $\phi_1, \phi_2\in X_{T_1}$ defined in \eqref{eq4.9},
where $\sigma\in(0,\sqrt{2})$.
Hence, $\mathbf{T}$ is a contraction mapping in $X_{T_1}$ when taking $T_1$ large enough. Hence, according to the Banach fixed point theorem, there exists $\phi\in X_{T_1}$ such that
$\mathbf{T}(\phi)=\phi$.

Next we will prove the estimate (\ref{eq4.3}). Choosing $\vec{h}^1,\vec{h}^2\in \Lambda_{T_1}$ defined in \eqref{ls}, according to the above proof, we know that for each $i\in\{1,2\}$, there exist
$\phi_i=\phi(t,x,\vec{h}^i)$ is a solution to problem \eqref{eq7.7}-\eqref{eq7.8} with $\vec{\gamma}^i=\vec{\gamma}^0+\vec{h}^i$ .

 We note that $\phi_1-\phi_2$ does not
 satisfy the orthogonality condition (\ref{eq7.8}).
Let us consider $\bar{\phi}=\phi_1-\bar{\phi}_2$ with
\begin{equation*}
  \bar{\phi}_2:= \phi_2-\sum_{j=1}^k\tilde{b}_j(t)\omega'\big(x-\gamma^1_j(t)\big),
\end{equation*}
where $\tilde{b}_j(t)$, $j=1,\cdots,k$, are defined by the following equalities
\begin{equation}\label{deb}
\sum_{l=1}^k\tilde{b}_l(t)\int_{{\mathbb R}}\omega'\big(x-\gamma^1_l(t)\big)\omega'\big(r-\gamma^1_j(t)\big)\,{\mathrm d}x
=\int_{{\mathbb R}}\phi_2(t,x)\omega'\big(x-\gamma^1_j(t)\big)\,{\mathrm d}x.
\end{equation}
According the proof of Lemma \ref{lem6}, the left hand side of above equality is nearly diagonal, hence these functions $\tilde{b}_j(t)$ with $j=1,\cdots,k$,
are well-defined.

Thus $\bar{\phi}_2$ satisfies the following problem

\begin{align*}
\bar{\phi}_t=F'(z^1(t,x))[\bar{\phi}] +\big[E(t,x,h^1)-E(t,x,h^2)\big] +\big[N(\phi_1,h^1)-N(\phi_2,h^2)\big]
+ R\big(\vec{h}^1,\vec{h}^2,\phi_2\big)
\\[3mm]\qquad\qquad
+\sum\limits_{j=1}^kc^2_j(t)\Big[\omega'\big(x-\gamma^2_j(t)\big)-\omega'\big(x-\gamma^1_j(t)\big)\Big]
-\sum\limits_{j=1}^k\big[c^1_j(t)-c^2_j(t)\big]\omega'\big(x-\gamma^1_{j}(t)\big)
\\[3mm]
\qquad\qquad\text{in}\ (T,+\infty)\times{\mathbb R},
\end{align*}

\noindent and
\begin{align*}
 \int_{{\mathbb R}} \bar{\phi}(t,x)\omega'\big(x-\gamma^1_j(t)\big)\,{\mathrm d}x=0 \qquad  \text{for all}\ j=1,\cdots,k\ \text{and} \ t>T,
 \end{align*}
where
\begin{align*}
R(\cdot)=&\Big{[}F'(z^1(t,x))-F'(z^2(t,x))\Big{]}[\phi_2]
\\
&\,+\,\sum_{j=1}^k\Big{[}\tilde{b}'_j(t)\omega'\big(x-\gamma^1_j(t)\big)
-[\gamma^1_j(t)]'\tilde{b}_j(t)\omega''(x-\gamma^1_j(t)\Big{]}.
\end{align*}
Hence  using Lemmas {\ref{lem7}} and {\ref{lem8}}, similar proof of Lemma \ref{lem10} and the fact
\begin{equation*}
   \frac{\abs{\omega''\big(x-\gamma_i(t)\big)}}{\Phi(t,x)}+\frac{\omega'\big(x-\gamma_i(t)\big)}{\Phi(t,x)}\leq C\frac{t^{\frac{3}{4}+\frac{\sigma}{8\sqrt{2}}}}{\big[\log t\big]^\alpha},\quad \forall\, i=1,\cdots,k,
\end{equation*}
 we can prove that
\begin{equation}\label{eq4.10}\begin{aligned}
 \|  \bar{\phi}\|_{X_{T_1}}
 \,\leq\, &C\left[T_1\right]^{-\frac{\sigma}{16\sqrt{2}}}\Big{\{}\|h^1-h^2\|_{\Lambda_{T_1}}+\|\phi_1-\phi_2\|_{X_{T_1}}\Big{\}}
\\[2mm]
&\ \ +C\sum_{l=1}^k\sup_{t\geq T_1}\frac{\,t^{\frac{3}{4}+\frac{\sigma}{8\sqrt{2}}}\,}
{\big[\log t\big]^\alpha}\Big{(}\frac{1}{t}\abs{\tilde{b}_l(t)}+\abs{\tilde{b}'_l(t)}\Big{)}.
\end{aligned}\end{equation}
By the orthogonality condition (\ref{eq7.8}) and the definition of $\Phi(t,x)$ in \eqref{dphi}, we get that
\begin{equation}\label{eq4.11}\begin{aligned}
  &\abs{\int_{{\mathbb R}}\phi_2(t,x)\omega'\big(x-\gamma^1_j(t)\big)\,{\mathrm d}x}
\\[3mm]
&=\abs{\int_{{\mathbb R}} \phi_2(t,x)\Big[\omega'\big(x-\gamma^2_j(t)\big)-\omega'\big(x-\gamma^1_j(t)\big)\Big]
\,{\mathrm d}x}
\\[3mm]
&\leq C\left[T_1\right]^{-\frac{\sigma}{16\sqrt{2}}}\|h^1-h^2\|_{\Lambda_{T_1}}t^{-\frac{3}{4}-\frac{\sigma}{8\sqrt{2}}}.
\end{aligned}\end{equation}
We consider
\begin{equation}\label{eq4.12}
\begin{aligned}
  &\abs{\frac{{\mathrm d}}{{\mathrm d}t}\int_{{\mathbb R}} \phi_2(t,x)\omega'\big(x-\gamma^1_j(t)\big)\,{\mathrm d}x}
  \\[2mm]
  &=\abs{\frac{{\mathrm d}}{{\mathrm d}t}\int_{{\mathbb R}} \phi_2(t,x)\big{[}\omega'\big(x-\gamma^2_j(t)\big)-\omega'\big(x-\gamma^1_j(t)\big)\big{]}\,{\mathrm d}x}.
\end{aligned}
\end{equation}
By equation (\ref{eq7.7}), we obtain that
\begin{align}\label{eq4.13}
\nonumber&\int_{{\mathbb R}}(\phi_2)_t\left[\omega'\big(x-\gamma^2_j(t)\big)-\omega'\big(x-\gamma^1_j(t)\big)\right]\,{\mathrm d}x
  \\[2mm]
  &=\int_{{\mathbb R}} \Big{[}(\phi_2)_t-F'(z^2(t,x))[\phi_2]\Big{]}
  \Big\{\omega'\big(x-\gamma^2_j(t)\big)-\omega'\big(x-\gamma^1_j(t)\big)\Big\}\,{\mathrm d}x\nonumber
\\[2mm]
&\qquad +\int_{{\mathbb R}}F'(z^2(t,x))[\phi_2]\Big\{\omega'\big(x-\gamma^2_j(t)\big)-\omega'\big(x-\gamma^1_j(t)\big)\Big\}\,{\mathrm d}x\nonumber
\\[2mm]
&=\int_{{\mathbb R}} \Big{[}(\phi_2)_t-F'(z^2(t,x))[\phi_2]\Big{]}\Big\{\omega'\big(x-\gamma^2_j(t)\big)-\omega'\big(x-\gamma^1_j(t)\big)\Big\}\,{\mathrm d}x\nonumber
\\[2mm]
& \qquad  +\int_{{\mathbb R}}[\phi_2]F'(z^2(t,x))\Big\{\omega'\big(x-\gamma^2_j(t)\big)-\omega'\big(x-\gamma^1_j(t)\big)\Big\}\,{\mathrm d}x,
\end{align}
where $\widetilde{\omega}=\omega'\big(x-\gamma^2_j(t)\big)-\omega'\big(x-\gamma^1_j(t)\big)$.

Since $\phi_2\in X_{T_1}$, using Lemma \ref{lem6}, we have
\begin{equation}\label{eq4.14}\begin{aligned}
  &\abs{\int_{{\mathbb R}}(\phi_2)_t\Big{[}\omega'\big(x-\gamma^2_j(t)\big)-\omega'\big(x-\gamma^1_j(t)\big)\Big{]}\,{\mathrm d}x} \leq C\left[T_1\right]^{-\frac{\sigma}{16\sqrt{2}}}\|h^1-h^2\|_{\Lambda_{T_1}}t^{-\frac{3}{4}-\frac{\sigma}{8\sqrt{2}}}.
\end{aligned}\end{equation}
By the above estimates (\ref{eq4.11}), (\ref{eq4.12}), (\ref{eq4.14}) and the definition of $\tilde{b}_j(t)$ in \eqref{deb}, we have
\begin{equation*}
  \abs{\tilde{b}_l(t)}+\abs{\tilde{b}'_l(t)}\leq C\left[T_1\right]^{-\frac{\sigma}{16\sqrt{2}}}\|h^1-h^2\|_{\Lambda_{T_1}}t^{-\frac{3}{4}-\frac{\sigma}{8\sqrt{2}}}.
\end{equation*}
Hence by \eqref{eq4.10}, there holds that
\begin{equation*}
  \|  \bar{\phi}\|_{X_{T_1}}\leq C \left[T_1\right]^{-\frac{\sigma}{16\sqrt{2}}}\Big{[}
\| \phi_1-\phi_2\|_{X_{T_1}}+\|h^1-h^2\|_{\Lambda_{T_1}}\Big{]}.
\end{equation*}
Eventually, we have
\begin{equation*}\begin{aligned}
  \| \phi_1-\phi_2\|_{X_{T_1}}&\leq  \| \bar{\phi}\|_{X_{T_1}}
+C\sum_{l=1}^k\sup_{t\geq T_1}\left\{\frac{t^{\frac{3}{4}+\frac{\sigma}{8\sqrt{2}}}}{\big[\log t\big]^\alpha}\abs{\tilde{b}_l(t)}\right\}
\\[2mm]
&
\leq C \left[T_1\right]^{-\frac{\sigma}{16\sqrt{2}}}\Big{[}
\| \phi_1-\phi_2\|_{X_{T_1}}+\|h^1-h^2\|_{\Lambda_{T_1}}\Big{]}.
\end{aligned}\end{equation*}
 Choosing $T_1$ large enough, we can get the inequality (\ref{eq4.3}).
 \qed

\section{The reduction procedure} \label{sec:tc}

\subsection{Deriving the nonlinear system with reduced equations}\label{section5.1}

We will choose suitable $\vec{h}$ such that these functions $c_i(t)=0$ with $i=1,\cdots,k$, in equation (\ref{eq7.7}). Note that
the functions $c_i(t)$ are depend on $\vec{h}$. According to (\ref{eq7.9}),
these relations $c_i(t)=0$ with $i=1,\cdots, k$ are equivalent to
\begin{align}
  0=&\int_{{\mathbb R}} F'(z(t,x))[\phi]\omega'\big(x-\gamma_i(t)\big)\,{\mathrm d}x
  \,-\,
  \gamma_i'(t)\int_{{\mathbb R}} \phi(t,x)\omega''\big(x-\gamma_i(t)\big)\,{\mathrm d}x
  \nonumber\\[2mm]&
  +\int_{{\mathbb R}}\big[E(t,x)+N(\phi)\big]\omega'\big(x-\gamma_i(t)\big)\,{\mathrm d}x\qquad \text{for all}\ i=1,\cdots,k\ \text{and}\ t>T,
  \label{eq7.12}
\end{align}
where the error term  $E(t,x)$ and the nonlinear term $N(\phi)$ are defined by (\ref{eq7.5}) and (\ref{eq7.6}) respectively, and the functions $\phi(t,x),\partial_x\phi(t,x), \partial_{xx}\phi(t,x) \in C_{\Phi}([T,+\infty)\times{\mathbb R})$ i.e.
\begin{equation}\label{spsi}
  \sup_{x\in{\mathbb R},\ t>T}\Big\{\big[\abs{\phi(t,x)}
  \,+\,\abs{\partial_{x}\phi(t,x)}
  \,+\,\abs{\partial_{xx}\phi(t,x)}\big]\Phi(t,x)\Big\}<+\infty,
\end{equation}
where the function $\Phi(t,x)$ is defined by \eqref{dphi}.

\subsubsection{}\label{section5.1.1}
We firstly estimate the projection of the error term $E(t,x)$ in \eqref{eq7.12}.

\noindent\textbf{Step 1.}
 By the definitions of $E(t,x)$ and $z(t,x)$ in \eqref{eq7.5} and \eqref{dzz}, Lemmas \ref{esxi}-\ref{lem33} and \ref{lem10}, we have that
\begin{align}\label{dEE}
\nonumber&\int_{\mathbb R} E(t,x)\omega'\big(x-\gamma_i(t)\big)\,{\mathrm d}x
\\[3mm]
&\nonumber=\int_{J_i(t)}E(t,x)\omega'\big(x-\gamma_i(t)\big)\,{\mathrm d}x
+\int_{\left[ J_i(t)\right]^c}E(t,x)\omega'\big(x-\gamma_i(t)\big)\,{\mathrm d}x
\\[3mm]
&\nonumber=\int_{J_i(t)}E(t,x)\omega'\big(x-\gamma_i(t)\big)\,{\mathrm d}x+O\left(t^{-1-\frac{\sigma}{4\sqrt{2}}}\right)
\\[3mm]
&\nonumber=-\int_{J_i(t)}\Big[\partial_{xx}-W''(z(t,x))\Big]\Big[\partial_{xx}z(t,x)-W'(z(t,x))\Big]\omega'\big(x-\gamma_i(t)\big)\,{\mathrm d}x
\\[3mm]
&\qquad+(-1)^{i+1}\frac{2\sqrt{2}}{3}\gamma'_i(t)\,+\, O\left(t^{-1-\frac{\sigma}{4\sqrt{2}}}\right),
\end{align}
where $\sigma\in(0,\sqrt{2})$, the intervals  $J_i(t)$ with $i=1,\cdots,k$, are defined by
\begin{align*}
  J_i(t):=\left[\frac{\gamma_{i}(t)+\gamma_{i-1}(t)}{2},\frac{\gamma_{i}(t)+\gamma_{i+1}(t)}{2}\right],
\end{align*}
and the interval $\big[J_i(t)\big]^c$ is the complementary set of $J_i(t)$ on the whole space ${\mathbb R}$. Here we have used the fact that
\begin{equation*}
  \int_{\mathbb R}[\omega'(x)]^2\,{\mathrm d}x=\frac{2\sqrt{2}}{3}.
\end{equation*}

\noindent\textbf{Step 2.}
In this step, we concern the first term at the right hand side of \eqref{dEE}.
According to the integration by parts and the equalities \eqref{eqq}, \eqref{esee}, \eqref{estmE}, we get that
\begin{align}\label{eseee1}
\nonumber&\int_{J_i(t)}\Big[\partial_{xx}-W''(z(t,x))\Big]\Big[\partial_{xx}z(t,x)-W'(z(t,x))\Big]\omega'\big(x-\gamma_i(t)\big)\,{\mathrm d}x
\\[2mm]\nonumber&=\int_{J_i(t)}\Big[\partial_{xx}-W''(z(t,x))\Big]
\Bigg[\sum_{j=i-1}^{i+1}(-1)^{j+1}d_j(t)\omega'\big(x-\gamma_j(t)\big)+(-1)^{i+1}\mathcal{E}_i(z(t,x))\Bigg]
\\[2mm]\nonumber
&\qquad\qquad\times\omega'\big(x-\gamma_i(t)\big)\,{\mathrm d}x
\\[2mm]\nonumber&=\int_{J_i(t)}\Big[\partial_{xx}-W''(z(t,x))\Big]
\Bigg[\sum_{j=i-1}^{i+1}(-1)^{j+1}d_j(t)\omega\big(x-\gamma_j(t)\big)\Bigg]\omega'\big(x-\gamma_i(t)\big)\,{\mathrm d}x
\\[2mm]\nonumber&\quad+(-1)^{i+1}\int_{J_i(t)}\Big[W''\big(\omega\big(x-\gamma_i(t)\big)\big)-W''(z(t,x))\Big]\mathcal{E}_i(z(t,x))\omega'\big(x-\gamma_i(t)\big)\,{\mathrm d}x
\\[2mm]
&\quad+O\left(t^{-1-\frac{\sigma}{4\sqrt{2}}}\right)
\nonumber\\[2mm]
&:=\mathfrak{e}_1(t)+\mathfrak{e}_2(t)+O\left(t^{-1-\frac{\sigma}{4\sqrt{2}}}\right),
\end{align}
where we have used the fact that $\mathcal{E}_i(z(t,x))=O\left(t^{-\frac{3}{4}-\frac{\sigma}{4\sqrt{2}}}\right)$ given in \eqref{estmE}, the functions $\mathfrak{e}_1(t)$ and $\mathfrak{e}_2(t)$ are defined by the last equality.
Note that the functions $d_j(t)$ are defined in \eqref{ddt} with the estimates in Lemma \ref{lem33}.

For the function $\mathfrak{e}_2(t)$,  we have that
\begin{align}\label{eseee2}
  \nonumber&(-1)^{i+1}\mathfrak{e}_2(t)=\int_{J_i(t)}\Big[W''\big(\omega\big(x-\gamma_i(t)\big)\big)-W''(z(t,x))\Big]\mathcal{E}_i(z(t,x))\omega'\big(x-\gamma_i(t)\big)\,{\mathrm d}x
\\[2mm]\nonumber
&=3\int_{J_i(t)}\Big[\omega\big(x-\gamma_i(t)\big)+(-1)^{i+1}z(t,x)\Big]
\\[2mm]\nonumber
&\qquad\qquad\qquad\times\Big[\omega\big(x-\gamma_i(t)\big)-(-1)^{i+1}z(t,x)\Big]\mathcal{E}_i(z(t,x))\omega'\big(x-\gamma_i(t)\big)\,{\mathrm d}x
\\[2mm]&=O\left(t^{-1-\frac{\sigma}{4\sqrt{2}}}\right),
\end{align}
where we have used the facts that $W''(s)$ is an even function and
\begin{align*}
\abs{(-1)^{i+1}z(t,x)-\omega\big(x-\gamma_i(t)\big)}&\leq C\left[\abs{\omega\big(x-\gamma_{i-1}(t)\big)-1}+\abs{\omega\big(x-\gamma_{i+1}(t)\big)+1}+t^{-\frac{1}{2}}\right]
\\[2mm]&\leq Ct^{-\frac{1}{4}},
\end{align*}
for all $x\in J_i(t)$, where the constant $C>0$ does not depend on $t$ and $x$.

\noindent\textbf{Step 3.}
We consider the function $\mathfrak{e}_1(t)$ in \eqref{eseee1}.
By equation \eqref{eqq}, there holds that
\begin{align}\label{esee1}
  \nonumber&\mathfrak{e}_1(t)=\sum_{j=i-1}^{i+1}(-1)^{j+1}d_j(t)\int_{J_i(t)}\Big[\partial_{xx}-W''(z(t,x))\Big]\Big[\omega'\big(x-\gamma_j(t)\big)\Big]\omega'\big(x-\gamma_i(t)\big)\,{\mathrm d}x
\\[2mm]\nonumber&=\sum_{j=i-1}^{i+1}(-1)^{j+1}d_j(t)\int_{J_i(t)}\Big[W''\big(\omega\big(x-\gamma_j(t)\big)\big)-W''(z(t,x))\Big]\omega'\big(x-\gamma_j(t)\big)\omega'\big(x-\gamma_i(t)\big)\,{\mathrm d}x
\\[2mm]\nonumber
&=3\sum_{j=i-1}^{i+1}(-1)^{j+1}d_j(t)\int_{J_i(t)}\Big[\omega\big(x-\gamma_j(t)\big)+(-1)^{j+1}z(t,x)\Big]\Big[\omega\big(x-\gamma_j(t)\big)-(-1)^{j+1}z(t,x)\Big]
\\[2mm]\nonumber
&\qquad\qquad\times\omega'\big(t,x-\gamma_j(t)\big)\omega'\big(x-\gamma_i(t)\big)\,{\mathrm d}x
\\[2mm]\nonumber
&=3(-1)^{i+1}d_i(t)\int_{J_i(t)}\Big[\omega\big(x-\gamma_i(t)\big)+(-1)^{i+1}z(t,x)\Big]\Big[\omega\big(x-\gamma_i(t)\big)-(-1)^{i+1}z(t,x)\Big]
\\[2mm]\nonumber
&\qquad\qquad\qquad\qquad\times\big[\omega'\big(x-\gamma_i(t)\big)\big]^2\,{\mathrm d}x
\\[2mm]
&\quad-(-1)^{i+1}\Bigg\{16\sqrt{2}d_{i-1}(t)e^{-\sqrt{2}\eta_{i}(t)}+16\sqrt{2}d_{i+1}(t)e^{-\sqrt{2}\eta_{i+1}(t)}
+O\left(t^{-1-\frac{\sigma}{4\sqrt{2}}}\right)\Bigg\},
\end{align}
where  the constant $\sigma\in(0,\sqrt{2})$, and the functions $\eta_i$ are given in \eqref{eta}.
In the above, we have used the following estimates
\begin{align*}
&\Big[\omega\big(x-\gamma_j(t)\big)+(-1)^{j+1}z(t,x)\Big]\Big[\omega\big(x-\gamma_j(t)\big)-(-1)^{j+1}z(t,x)\Big]
\\[2mm]&=\big[1-\omega\big(x-\gamma_i(t)\big)\big]\big[1+\omega\big(x-\gamma_i(t)\big)\big]+O\left(t^{-\frac{1}{4}}\right)
\\[2mm]&=\sqrt{2}\omega'\big(x-\gamma_i(t)\big)+O\left(t^{-\frac{1}{4}}\right),
\end{align*}
for all $x\in J_i(t)$ and $i\neq j$. By the expression of $\omega(x)$ in \eqref{domega},  there hold that
\begin{equation*}
  \int_{J_i(t)}\omega'\big(x-\gamma_j(t)\big)\omega'\big(x-\gamma_i(t)\big)\,{\mathrm d}x=O\left(t^{-\frac{1}{2}}\ln t\right),
\end{equation*}
and
\begin{align*}
  \int_{J_i(t)}\omega'\big(x-\gamma_j(t)\big)\big[\omega'\big(x-\gamma_i(t)\big)\big]^2\,{\mathrm d}x
=\frac{16}{3}e^{-\sqrt{2}\abs{\gamma_j(t)-\gamma_i(t)}}+O\left(t^{-\frac{3}{4}}\right).
\end{align*}

Next we need to study the first term at the right hand side of \eqref{esee1}.
By the definition of $z(t,x)$ in \eqref{dzz},
we get that
\begin{align*}
z\big(t,x+\gamma_i(t)\big)=&\,(-1)^{i+1}\omega(x)
\,+\,
\sum_{j< i}(-1)^{j+1}\big[\omega\big(x+\gamma_i(t)-\gamma_j(t)\big)-1\big]
\\[2mm]
&+\sum_{j> i}(-1)^{j+1}\big[\omega\big(x+\gamma_i(t)-\gamma_j(t)\big)+1\big]
-\sum_{j=1}^{k-1}(-1)^{j+1}\xi_j\big(t,x+\gamma_i(t)\big)
\\[2mm]
&-\sum_{j=1}^k(-1)^{j+1}\widehat{\xi}_j\big(t,x+\gamma_i(t)\big)
\\[2mm]
=&\,(-1)^{i+1}\Big[\omega(x)-g_1(t,x)-g_2(t,x)-\widehat{\xi}_i\big(t,x+\gamma_i(t)\big)+\xi_{i-1}\big(t,x+\gamma_i(t)\big)
\\[2mm]
&\qquad\qquad-\xi_i\big(t,x+\gamma_i(t)\big) +g(t,x)\Big],
\end{align*}
where
$$
g_1(t,x):=\omega\big(x+\gamma_i(t)-\gamma_{i-1}(t)\big)-1,
 \qquad
 g_2(t,x):=\omega\big(x+\gamma_i(t)-\gamma_{i+1}(t)\big)+1,
 $$
and $g(t,x)$ is defined by
\begin{align*}
  g(t,x):=&\sum_{j=1}^{i-2}(-1)^{j+i}\big[\omega\big(x+\gamma_i(t)-\gamma_j(t)\big)-1\big]
  \,+\,
  \sum_{j=i+2}^k(-1)^{j+i}\big[\omega\big(x+\gamma_i(t)-\gamma_j(t)\big)+1\big]
\\[2mm]
&\,-\,
\sum_{\substack{j=1\\ j\neq i-1, i}}^{k-1}(-1)^{j+i}\xi_j\big(t,x+\gamma_i(t)\big)
\,-\,
\sum_{\substack{j=1\\ j\neq i}}^{k}(-1)^{j+i}\widehat{\xi}_j\big(t,x+\gamma_i(t)\big).
\end{align*}
Hence by Lemmas \ref{esxi}-\ref{lem33}, we have that
\begin{align*}
&\Big[\omega(x)+(-1)^{i+1}z\big(t,x+\gamma_i(t)\big)\Big]\Big[\omega(x)-(-1)^{i+1}z\big(t,x+\gamma_i(t)\big)\Big]
\\[2mm]
&=\Big[g_1(t,x)+g_2(t,x)+\widehat{\xi}_i\big(t,x+\gamma_i(t)\big)-\xi_{i-1}\big(t,x+\gamma_i(t)\big)+\xi_i\big(t,x+\gamma_i(t)\big)-g(t,x)\Big]
\\[2mm]
&\quad\times\Big[2\omega(x)-g_1(t,x)-g_2(t,x)-\widehat{\xi}_i\big(t,x+\gamma_i(t)\big)+\xi_{i-1}\big(t,x+\gamma_i(t)\big)
-\xi_i\big(t,x+\gamma_i(t)\big)+g(t,x)\Big],
\end{align*}
where
$$
\abs{g(t,x)}=O\left(t^{-\frac{1}{2}-\frac{\sigma}{4\sqrt{2}}}\right)
$$
for all
$$
x\in\widehat{J}_i(t):=\left[{\frac{\gamma_{i-1}(t)-\gamma_{i}(t)}{2}},{\frac{\gamma_{i+1}(t)-\gamma_{i}(t)}{2}}\right]
=\left[-\frac{\eta_{i}(t)}{2},\frac{\eta_{i+1}(t)}{2}\right].
$$
Thus, combining the above identities and Lemmas \ref{esxi}-\ref{lem33},
we get that the first term at the right hand side of \eqref{esee1} can be computed in the following way
\begin{align}\label{esee2}
\nonumber&d_i(t)\int_{J_i(t)}\Big[\omega\big(x-\gamma_i(t)\big)+(-1)^{i+1}z(t,x)\Big]\Big[\omega\big(x-\gamma_i(t)\big)-(-1)^{i+1}z(t,x)\Big]\big[\omega'\big(x-\gamma_i(t)\big)\big]^2\,{\mathrm d}x
\\[2mm]\nonumber&=2d_i(t)\int_{\widehat{J}_i(t)}
\Big[g_1(t,x)+g_2(t,x)+\widehat{\xi}_i\big(t,x+\gamma_i(t)\big)-\xi_{i-1}\big(t,x+\gamma_i(t)\big)+\xi_i\big(t,x+\gamma_i(t)\big)\Big]
\\[2mm]\nonumber
&\qquad\qquad\qquad\times\big[\omega'(x)\big]^2\omega(x)\,{\mathrm d}x
\\[2mm]\nonumber
&\quad
+O\Bigg(t^{-\frac{1}{2}}\int_{{\mathbb R}}\Bigg[\sum_{i,j=1}^2\abs{g_i(t,x)g_j(t,x)}
  +\sum_{i=1}^2t^{-\frac{1}{2}}\abs{g_i(t,x)}\Bigg]\big[\omega'(x)\big]^2\,{\mathrm d}x
  \,+\, t^{-1-\frac{\sigma}{4\sqrt{2}}}\Bigg)
  \\[2mm]\nonumber
  &=2d_i(t)\int_{\mathbb R}
\Big[\widetilde{\xi}_i(t,x)-\xi_{i-1}\big(t,x+\gamma_i(t)\big)+\xi_i\big(t,x+\gamma_i(t)\big)\Big]\big[\omega'(x)\big]^2\omega(x)\,{\mathrm d}x
  \\[2mm]
  &\quad
  +2d_i(t)\int_{-\frac{1}{2}\eta_{i}(t)}^{\frac{1}{2}\eta_{i+1}(t)}
\Big[g_1(t,x)+g_2(t,x)\Big]\big[\omega'(x)\big]^2\omega(x)\,{\mathrm d}x
\ +\ O\left(t^{-1-\frac{\sigma}{4\sqrt{2}}}\right),
\end{align}
where in the last equality, we have used the following facts
\begin{align*}
\int_{\mathbb R} \abs{g_1(t,x)}[\omega'(x)]^2\,{\mathrm d}x
&=\int_{\mathbb R}\Big[1-\omega\big(x+\gamma_i(t)-\gamma_{i-1}(t)\big)\Big]\big[\omega'(x)\big]^2
\,{\mathrm d}x
\\[2mm]
&=16\int_{\mathbb R}\frac{e^{2\sqrt{2}x}}{\big(1+e^{\sqrt{2}x}\big)^4\big(1+e^{\sqrt{2}\{x+\gamma_i(t)-\gamma_{i-1}(t)\}}\big)}\,{\mathrm d}x
\\[2mm]
&=8\sqrt{2}\int_0^\infty\frac{y}{\big(1+y\big)^4\big(1+ye^{\sqrt{2}\eta_i(t)}\big)}\,{\mathrm d}y
\\[2mm]
&=O\left(t^{-\frac{1}{2}}\right),
\end{align*}
and
\begin{align*}
\int_{\mathbb R}\big[g_1(t,x)\big]^2\big[\omega'(x)\big]^2\,{\mathrm d}x
&=\int_{\mathbb R}\Big[1-\omega\big(x+\gamma_i(t)-\gamma_{i-1}(t)\big)\Big]^2\big[\omega'(x)\big]^2\,{\mathrm d}x
\\[2mm]&=32\int_{\mathbb R}\frac{e^{2\sqrt{2}x}}{\big(1+e^{\sqrt{2}x}\big)^4\big(1+e^{\sqrt{2}\{x+\gamma_i(t)-\gamma_{i-1}(t)\}}\big)^2}\,{\mathrm d}x
\\[2mm]&=16\sqrt{2}\int_0^\infty\frac{y}{\big(1+y\big)^4\big(1+ye^{\sqrt{2}\eta_i(t)}\big)^2}\,{\mathrm d}y
\\[2mm]&=O\left(t^{-1}\right).
\end{align*}
Similarly, there also hold that
\begin{align*}
\int_{\mathbb R} g_2(t,x)\big[\omega'(x)\big]^2\,{\mathrm d}x
&=\int_{\mathbb R}\Big[1+\omega\big(x+\gamma_i(t)-\gamma_{i+1}(t)\big)\Big]\big[\omega'(x)\big]^2\,{\mathrm d}x
\\[2mm]&=\int_{\mathbb R} \Big[1-\omega\big(x+\gamma_{i+1}(t)-\gamma_i(t)\big)\Big]\big[\omega'(x)\big]^2\,{\mathrm d}x
\\[2mm]&=O\left(t^{-\frac{1}{2}}\right),
\end{align*}
and
\begin{align*}
\int_{\mathbb R}\big[g_2(t,x)\big]^2\big[\omega'(x)\big]^2\,{\mathrm d}x
&=\int_{\mathbb R}\Big[1+\omega\big(x+\gamma_i(t)-\gamma_{i+1}(t)\big)\Big]^2\big[\omega'(x)\big]^2\,{\mathrm d}x
\\[2mm]
&=\int_{\mathbb R}\Big[1-\omega\big(x+\gamma_{i+1}(t)-\gamma_i(t)\big)\Big]^2\big[\omega'(x)\big]^2\,{\mathrm d}x
\\[2mm]
&=O\left(t^{-1}\right).
\end{align*}
By the definition of $\omega(x)$ in \eqref{domega}, we have that
\begin{align*}
&\int_{\mathbb R} \abs{g_2(t,x)g_1(t,x)}\big[\omega'(x)\big]^2\,{\mathrm d}x
\\[2mm]&=\int_{\mathbb R}\big[1-\omega\big(x+\gamma_i(t)-\gamma_{i-1}(t)\big)\big]\big[1+\omega\big(x+\gamma_i(t)-\gamma_{i+1}(t)\big)\big]\big[\omega'(x)\big]^2\,{\mathrm d}x
\\[2mm]&=32\int_{\mathbb R}\frac{e^{3\sqrt{2}x}}{\big(1+e^{\sqrt{2}x}\big)^4\big(1+e^{\sqrt{2}\{x+\eta_i(t)\}}\big)\big(e^{\sqrt{2}\eta_{i+1}(t)}+e^{\sqrt{2}x}\big)}\,{\mathrm d}x
\\[2mm]&=O\left(t^{-1}\right).
\end{align*}

In the rest part of this step, we estimate the terms in \eqref{esee2}.
This process includes the following three cases.

\noindent\textbf{Case one.}
We obtain that the estimates of the terms for $g_1(t,x)$ and $g_2(t,x)$. By the definition of $\omega(x)$ in \eqref{domega}, we have that
\begin{align*}
&\int_{-\frac{1}{2}\eta_{i}(t)}^{\frac{1}{2}\eta_{i+1}(t)} g_1(t,x)\omega(x)\big[\omega'(x)\big]^2\,{\mathrm d}x
\\[2mm]
&=-\int_{-\frac{1}{2}\eta_{i}(t)}^{\frac{1}{2}\eta_{i+1}(t)}
\Big[1-\omega\big(x+\gamma_i(t)-\gamma_{i-1}(t)\big)\Big]\omega(x)\big[\omega'(x)\big]^2\,{\mathrm d}x
\\[2mm]&=16\int_{-\frac{1}{2}\eta_{i}(t)}^{\frac{1}{2}\eta_{i+1}(t)}\frac{e^{2\sqrt{2}x}\big(1-e^{\sqrt{2}x}\big)}{\big(1+e^{\sqrt{2}x}\big)^5\big(1+e^{\sqrt{2}\{x+\eta_i(t)\}}\big)}\,{\mathrm d}x
\\[2mm]&=8\sqrt{2}\int_{e^{-\frac{\sqrt{2}}{2}\eta_{i}(t)}}^{e^{\frac{\sqrt{2}}{2}\eta_{i+1}(t)}}\frac{y\big(1-y\big)}{\big(1+y\big)^5\big(1+ye^{\sqrt{2}\eta_i(t)}\big)}\,{\mathrm d}y
\\[2mm]&=8\sqrt{2}\int_{e^{-\frac{\sqrt{2}}{2}\eta_{i}(t)}}^{\infty}\frac{y\big(1-y\big)}{\big(1+y\big)^5\big(1+ye^{\sqrt{2}\eta_i(t)}\big)}\,{\mathrm d}y+O\left(t^{-\frac{5}{4}}\right)
\\[2mm]&=\frac{4\sqrt{2}}{3}e^{-\sqrt{2}\eta_i(t)}+O\left(t^{-\frac{3}{4}}\right),
\end{align*}
and
\begin{align*}
&\int_{-\frac{1}{2}\eta_{i}(t)}^{\frac{1}{2}\eta_{i+1}(t)} g_2(t,x)\omega(x)\big[\omega'(x)\big]^2\,{\mathrm d}x
\\[2mm]
&=\int_{-\frac{1}{2}\eta_{i}(t)}^{\frac{1}{2}\eta_{i+1}(t)}
\Big[1+\omega\big(x+\gamma_i(t)-\gamma_{i+1}(t)\big)\Big]\omega(x)\big[\omega'(x)\big]^2\,{\mathrm d}x
\\[2mm]
&=-\int^{\frac{1}{2}\eta_{i}(t)}_{-\frac{1}{2}\eta_{i+1}(t)}
\Big[1-\omega\big(x+\gamma_{i+1}(t)-\gamma_i(t)\big)\Big]\omega(x)\big[\omega'(x)\big]^2\,{\mathrm d}x
\\[2mm]&=\frac{4\sqrt{2}}{3}e^{-\sqrt{2}\eta_{i+1}(t)}+O\left(t^{-\frac{3}{4}}\right).
\end{align*}

\noindent\textbf{Case two.}
We study the terms involving $\xi_i(t,x)$ and $\xi_{i+1}(t,x)$ in \eqref{esee2}.
By the definition of the function $\xi_i(t,x)$ in \eqref{exi}, we have that
\begin{align*}
&\int_{\mathbb R} \xi_i\big(t,x+\gamma_i(t)\big)\omega(x)\big[\omega'(x)\big]^2\,{\mathrm d}x
\\[2mm]&=\frac{24\sqrt{2}}{e^{\sqrt{2}\eta_{i+1}(t)}-1}\Bigg\{\int_0^\infty \left[ e^{\sqrt{2}\eta_{i+1}(t)}\ln\left(1+ye^{-\sqrt{2}\eta_{i+1}(t)}\right)-\ln\left(1+y\right)\right]\frac{y-1}{\big(y+1\big)^5}\,{\mathrm d}y
\\[2mm]&\qquad+\int_0^\infty \left[\ln\left(1+y\right)
-e^{-\sqrt{2}\eta_{i+1}(t)}\ln\left(1+ye^{\sqrt{2}\eta_{i+1}(t)}\right)\right]\frac{1-y}{\big(y+1\big)^5}\,{\mathrm d}y\Bigg\}
\\[2mm]&=\frac{2}{3}\sqrt{2}e^{-\sqrt{2}\eta_{i+1}(t)}+O\left(t^{-1}\ln t\right),
\end{align*}
and
\begin{align*}
&\int_{\mathbb R} \xi_{i-1}\big(t,x+\gamma_i(t)\big)\omega(x)\big[\omega'(x)\big]^2\,{\mathrm d}x
\\[2mm]&=\frac{24\sqrt{2}}{e^{\sqrt{2}\eta_{i}(t)}-1}\Bigg\{\int_0^\infty \left[ \ln\left(1+y\right)- e^{-\sqrt{2}\eta_{i}(t)}\ln\left(1+ye^{\sqrt{2}\eta_{i}(t)}\right)\right]\frac{y-1}{\big(y+1\big)^5}\,{\mathrm d}y
\\[2mm]&\qquad+\int_0^\infty \left[
e^{\sqrt{2}\eta_{i}(t)}\ln\left(1+ye^{-\sqrt{2}\eta_{i}(t)}\right)-\ln\left(1+y\right)\right]\frac{1-y}{\big(y+1\big)^5}\,{\mathrm d}y\Bigg\}
\\[2mm]&=-\frac{2}{3}\sqrt{2}e^{-\sqrt{2}\eta_{i}(t)}+O\left(t^{-1}\ln t\right).
\end{align*}

\noindent\textbf{Case three.}
We study the term involving $\widetilde{\xi}_i(t,x)$ in \eqref{esee2}. Using the integration by parts and the definition of $\widetilde{\xi}_i(t,x)$ in \eqref{dxi1}, we have that
\begin{align*}
\int_{\mathbb R} \widetilde{\xi}_{i}(t,x)\omega(x)\big[\omega'(x)\big]^2\,{\mathrm d}x
&=\int_{\mathbb R}\left\{\int_0^x\big(\omega'(s)\big)^{-2}\,{\mathrm d}s\int_{-\infty}^s\widetilde{\textrm{I}}_{i}(t,\tau)\omega'(\tau)\mathrm{d}\tau\right\}\omega(x)\big[\omega'(x)\big]^3\,{\mathrm d}x,
\\[2mm]&=\frac{\sqrt{2}}{6}\int_{\mathbb R} \omega'(x)\int_{-\infty}^x\widetilde{\textrm{I}}_{i}(t,\tau)\omega'(\tau)\mathrm{d}\tau \,{\mathrm d}x
\\[2mm]&=-\frac{\sqrt{2}}{6}\int_{\mathbb R} \widetilde{\textrm{I}}_{i}(t,x)\omega'(x)\omega(x)\,{\mathrm d}x,
\end{align*}
where we have used the following facts that
\begin{equation}\label{fact2}
  \sqrt{2}\omega'(x)=1-\omega^2(x)\qquad \text{and}\qquad\int_{\mathbb R}\widetilde{\textrm{I}}_{i}(t,\tau)\omega'(\tau)d\tau=0.
\end{equation}
Moreover, according to the definition of $\widetilde{\textrm{I}}_{i}(t,x)$ in \eqref{dIi} and $\omega(x)$ is an odd function, we have that
\begin{align*}
&\int_{\mathbb R} \widetilde{\xi}_{i}(t,x)\omega(x)\big[\omega'(x)\big]^2\,{\mathrm d}x
=-\frac{\sqrt{2}}{6}\int_{\mathbb R} \widetilde{\textrm{I}}_{i}(t,x)\omega'(x)\omega(x)\,{\mathrm d}x
\\[2mm]&=-\frac{\sqrt{2}}{2}\int_{\mathbb R}\left\{\big(1-\omega(x)\big)^2\big[\omega(x-\eta_{i+1}(t))+1\big]-\sqrt{2}\omega'(x)\xi_i\big(t,x+\gamma_i(t)\big)\right\}\omega'(x)\omega(x)\,{\mathrm d}x
\\[2mm]&\quad-\frac{\sqrt{2}}{2}\int_{\mathbb R}\left\{\big(1+\omega(x)\big)^2\big[\omega(x+\eta_{i}(t))-1\big]+\sqrt{2}\omega'(x)\xi_{i-1}\big(t,x+\gamma_i(t)\big)\right\}\omega'(x)\omega(x)\,{\mathrm d}x,
\\[2mm]&=\frac{2}{3}\sqrt{2}e^{-\sqrt{2}\eta_{i}(t)}+\frac{2}{3}\sqrt{2}e^{-\sqrt{2}\eta_{i+1}(t)}+O\left(t^{-1}\ln t\right).
\end{align*}
where we have used the first equality in \eqref{fact2} again, and the following two equalities
\begin{align*}
\\&\int_{\mathbb R}\big(1-\omega(x)\big)^2\big[\omega(x-\eta_{i+1}(t))+1\big]\omega'(x)\omega(x)\,{\mathrm d}x
\\[2mm]&=16\sqrt{2}\int_{\mathbb R}\frac{e^{2\sqrt{2}x}\big(e^{\sqrt{2}x}-1\big)}{\big(1+e^{\sqrt{2}x}\big)^5\big(e^{\sqrt{2}\eta_i(t)}+e^{\sqrt{2}x}\big)}\,{\mathrm d}x
\\[2mm]&=16\int_0^\infty\frac{y\big(y-1\big)}{\big(1+y\big)^5\big(e^{\sqrt{2}\eta_i(t)}+y\big)}\,{\mathrm d}y
\\[2mm]&=O\left(t^{-1}\right),
\end{align*}
and
 \begin{align*}
 &\int_{\mathbb R}\big(1+\omega(x)\big)^2\big[\omega(x+\eta_{i}(t))-1\big]\omega'(x)\omega(x)\,{\mathrm d}x
 \\[2mm]&=\int_{\mathbb R}\big(1-\omega(x)\big)^2\big[1+\omega(x-\eta_{i}(t))\big]\omega'(x)\omega(x)\,{\mathrm d}x
 \\[2mm]&=O\left(t^{-1}\right).
\end{align*}

Combing the above arguments of all three cases and the  equalities  \eqref{esee1} and \eqref{esee2} , we get that
\begin{align}
  \mathfrak{e}_1(t)=&(-1)^{i+1}16\sqrt{2}\left\{ d_i(t)\left[e^{-\sqrt{2}\eta_{i+1}(t)}+e^{-\sqrt{2}\eta_{i}(t)}\right]-d_{i+1}(t)e^{-\sqrt{2}\eta_{i+1}(t)}-d_{i-1}(t)e^{-\sqrt{2}\eta_{i}(t)}\right\}
\nonumber  \\[2mm]
  &+O\left(t^{-1-\frac{\sigma}{4\sqrt{2}}}\right).
  \label{5.8-11}
\end{align}
This finishes all estimates in Step 3.

Finally, by combining the estimates in Step 1 through Step 3, see \eqref{dEE}, \eqref{eseee1} and \eqref{eseee2} and \eqref{5.8-11}, we obtain the estimate of the projection of $E(t,x)$ as the following
\begin{align}\label{dEEE}
&\int_{\mathbb R} E(t,x)\omega'\big(x-\gamma_i(t)\big)\,{\mathrm d}x
=(-1)^{i+1}\frac{2\sqrt{2}}{3}\gamma'_i(t)
\,-\,\mathfrak{e}_1(t)
\,+\, O\left(t^{-1-\frac{\sigma}{4\sqrt{2}}}\right)
  \nonumber\\[2mm]
&=\,
-(-1)^{i+1}16\sqrt{2}\bigg\{
d_i(t)\left[e^{-\sqrt{2}\eta_{i+1}(t)}+e^{-\sqrt{2}\eta_{i}(t)}\right]
-d_{i+1}(t)e^{-\sqrt{2}\eta_{i+1}(t)}-d_{i-1}(t)e^{-\sqrt{2}\eta_{i}(t)}
\bigg\}
  \nonumber\\[2mm]
&\quad \,+\, (-1)^{i+1}\frac{2\sqrt{2}}{3}\gamma_i'(t) \,+\, O\left(t^{-1-\frac{\sigma}{4\sqrt{2}}}\right).
\end{align}

\subsubsection{}\label{section5.1.2}
Next we estimate other terms in \eqref{eq7.12}. By the definitions of $N(\phi)$ and $\Phi(t,s)$ in \eqref{eq7.6} and \eqref{dphi} respectively,  and the condition \eqref{spsi}, we have that
\begin{align*}
  \int_{{\mathbb R}}\abs{N(\phi)}\omega'\big(x-\gamma_i(t)\big)\,{\mathrm d}x&\leq C\int_{{\mathbb R}}\big{(}\phi(t,x)\big{)}^2\omega'\big(x-\gamma_i(t)\big)\,{\mathrm d}x
  \\[2mm]
  &\leq C\int_{{\mathbb R}}\big{(}\Phi(t,x)\big{)}^2\omega'\big(x-\gamma_i(t)\big)\,{\mathrm d}x\\[2mm]
&\leq C t^{-\frac{3}{2}},
\end{align*}
where the constant $C>0$ does not depend on $t$.

 According to the definition of the operator $F'(u)[v]$ in \eqref{deF'}, the condition \eqref{spsi}, the equalities \eqref{esee} and \eqref{estmE}, the Taylor expansion and integration by parts, we have that

\begin{align*}
&\int_{{\mathbb R}} F'(z(t,x))[\phi]\omega'\big(x-\gamma_i(t)\big)\,{\mathrm d}x
\\[2mm]
&=\int_{{\mathbb R}}\Big{(}\phi_{xx}-W''(z(t,x))\phi\Big{)} \Big{(}W''(z(t,x))-W''\big(\omega\big(x-\gamma_i(t)\big)\big)\Big{)}\omega'\big(x-\gamma_i(t)\big)\,{\mathrm d}x
\\[2mm]
&\qquad-\int_{{\mathbb R}} W'''(z(t,x))\Big{[}\partial_{xx}z(t,x)-W'(z(t,x))\Big{]}\phi(t,x)\omega'\big(x-\gamma_i(t)\big)\,{\mathrm d}x
\\[2mm]
&\leq Ct^{-\frac{3}{4}-\frac{\sigma}{8\sqrt{2}}}\bigg\{\int_{{\mathbb R}}\abs{W''(z(t,x))-W''\big(\omega\big(x-\gamma_i(t)\big)\big)}\omega'\big(x-\gamma_i(t)\big)\,{\mathrm d}x
\\[2mm]
&\qquad\qquad  +\int_{{\mathbb R}}\abs{W'''(z(t,x))\Big{[}\partial_{xx}z(t,x)-W'(z(t,x))\Big{]}}\omega'\big(x-\gamma_i(t)\big)\,{\mathrm d}x\bigg\}
\\[2mm]
&\leq  Ct^{-\frac{3}{4}-\frac{\sigma}{8\sqrt{2}}}\bigg\{\int_{{\mathbb R}}\Big[\abs{\omega\big(x-\gamma_{i-1}(t)\big)-1}
+\abs{\omega\big(x-\gamma_{i+1}(t)\big)+1}\Big]\omega'\big(x-\gamma_i(t)\big)\,{\mathrm d}x
 \ +\ t^{-\frac{1}{2}}\bigg\}
 \\[2mm]
&\leq Ct^{-1-\frac{\sigma}{4\sqrt{2}}},
\end{align*}
where $\sigma\in(0,\sqrt{2})$, and the constant $C>0$ is independent of $t$.
Here we have used the following facts that
\begin{align*}
&\int_{{\mathbb R}}\big[1-\omega\big(x-\gamma_{i-1}(t)\big)\big]\omega'\big(x-\gamma_i(t)\big)\,{\mathrm d}x
\\[2mm]
&=\int_{{\mathbb R}}\big[1-\omega\big(x+\gamma_i(t)-\gamma_{i-1}(t)\big)\big]\omega'(x)\,{\mathrm d}x
\\[2mm]&=4\sqrt{2}\int_{\mathbb R}\frac{e^{\sqrt{2}x}}{\big(1+e^{\sqrt{2}x}\big)^2\big(1+e^{\sqrt{2}\{x+\gamma_i(t)-\gamma_{i-1}(t)\}}\big)}\,{\mathrm d}x
\\[2mm]&=4\int_0^\infty\frac{1}{\big(1+y\big)^2\big(1+ye^{\sqrt{2}\eta_i(t)}\big)}\,{\mathrm d}y
\\[2mm]&=O\left(t^{-\frac{1}{2}}\ln t\right),
\end{align*}
and
\begin{align*}
&\int_{{\mathbb R}}\big[1+\omega\big(x-\gamma_{i+1}(t)\big)\big]\omega'\big(x-\gamma_i(t)\big)\,{\mathrm d}x
\\[2mm]&=\int_{{\mathbb R}}\big[1+\omega\big(x+\gamma_i(t)-\gamma_{i+1}(t)\big)\big]\omega'(x)\,{\mathrm d}x
\\[2mm]&=\int_{{\mathbb R}}\big[1-\omega\big(x+\gamma_{i+1}(t)-\gamma_i(t)\big)\big]\omega'(x)\,{\mathrm d}x
\\[2mm]&=O\left(t^{-\frac{1}{2}}\ln t\right).
\end{align*}

\subsubsection{}\label{section5.1.3}
We finally draw a conclusion of Section \ref{section5.1}.
Combining the results in Section \ref{section5.1.1} (see \eqref{dEEE}) and also those in Section \ref{section5.1.2} together with the estimates of the functions $d_i(t)$ in Lemma \ref{lem33},
we can obtain that (\ref{eq7.12}) is equivalent to the following ODE system:
\begin{align}\label{system1}
\nonumber&\frac{2\sqrt{2}}{3}\gamma'_i(t)
-384\bigg\{
\,-\,e^{-\sqrt{2}\{\gamma_{i}(t)-\gamma_{i-2}(t)\}}
\,+\,2e^{-2\sqrt{2}\{\gamma_i(t)-\gamma_{i-1}(t)\}}
\\
&\qquad\qquad\qquad\qquad\,-\,2e^{-2\sqrt{2}\{\gamma_{i+1}(t)-\gamma_i(t)\}} \,+\,e^{-\sqrt{2}\{\gamma_{i+2}(t)-\gamma_i(t)\}}
\bigg\}=G_i(\vec{\gamma},\vec{\gamma}'),
  \end{align}
with $i=1,\cdots,k$, where we assume that $\gamma_0=\gamma_{-1}=-\infty$ and $\gamma_{k+1}=\gamma_{k+2}=+\infty$. We recall that $\gamma_i(t)=\gamma^0_i+h_i(t)$, and $\gamma^0_i$ is given by \eqref{degama0} for $i=1,\cdots,k$.
The function $\vec{h}$ belongs to the space $\Lambda$,
$$\Lambda=\Big{\{}\vec{h}=[h_1,\cdots,h_k]^\top
\,:\, h_i\in C^1(-\infty, -T]\ \text{and} \ \|\vec{h}\|_{\Lambda}\leq 1\Big{\}}$$
with $T>T_1$, where $T_1$ is given by Proposition \ref{prop3}. and the norm
\begin{equation*}
  \ \|\vec{h}\|_{\Lambda}:=\sum_{i=1}^k\sup_{t\geq T}\Big\{\abs{h_i(t)}+\abs{t}\abs{ h'_i}\Big\}.
\end{equation*}

We set
\begin{equation*}
\hat{G}_i(\vec{h},\vec{h}') :=G_i(\vec{\gamma},\vec{\gamma}'),\ \forall\, i=1,\cdots, k,
\qquad  \text{and}\qquad
\hat{\mathbf{G}}:=\big[\hat{G}_1,\cdots,\hat{G}_k\big]^\top.
\end{equation*}
According to the above arguments, Proposition \ref{prop3} and Lemma \ref{lem10}, we have that
\begin{prop}\label{prop9}
Let $\sigma\in\left(0,\sqrt{2}\ \right]$ and $\vec{h},\ \vec{h}^1, \ \vec{h}^2\in\Lambda$. Then we have that
\begin{equation*}
  \abs{\hat{\mathbf{G}}\left(\vec{h},\vec{h}'\right)}\leq Ct^{-1-\frac{\sigma}{4\sqrt{2}}},
\end{equation*}
and
\begin{equation*}
  \abs{\hat{\mathbf{G}}\left(\vec{{h}}^1,(\vec{h}^1)'\right)-\hat{\mathbf{G}}\left(\vec{h}^2,(\vec{h}^2)'\right)}\leq C t^{-1-\frac{\sigma}{4\sqrt{2}}}\|\vec{h}^1-\vec{h}^2\|_{\Lambda},
\end{equation*}
where $C$ is a uniform positive constant.
\qed
\end{prop}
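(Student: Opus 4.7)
The plan is to derive both estimates directly from the term-by-term decomposition already carried out in Section \ref{section5.1}, combined with the Lipschitz bounds coming from Proposition \ref{prop3} and Lemma \ref{lem8}. For the first bound I would observe that $\hat{G}_i$ is, by construction, the collection of remainders left after subtracting the leading Toda system \eqref{system1} from the projection identity \eqref{eq7.12}. Concretely, $G_i(\vec{\gamma},\vec{\gamma}')$ assembles four pieces: (i) the $O\bigl(t^{-1-\sigma/(4\sqrt{2})}\bigr)$ remainder of $\int_{\mathbb R} E(t,x)\,\omega'(x-\gamma_i)\,{\mathrm d}x$ captured in \eqref{dEEE}; (ii) the projection $\int_{\mathbb R} F'(z)[\phi]\,\omega'(x-\gamma_i)\,{\mathrm d}x$, which was bounded in Section \ref{section5.1.2} by $Ct^{-1-\sigma/(4\sqrt{2})}$ after using $\|\phi\|_{X_{T_1}}\leq 2\widehat{C}\,T_1^{-\sigma/(16\sqrt{2})}$ from Proposition \ref{prop3}; (iii) the nonlinear contribution $\int_{\mathbb R} N(\phi)\,\omega'(x-\gamma_i)\,{\mathrm d}x=O(t^{-3/2})$; and (iv) the drift term $\gamma_i'(t)\int_{\mathbb R}\phi\,\omega''(x-\gamma_i)\,{\mathrm d}x$, which is smaller still using $|\gamma_i'(t)|\leq C/t$. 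Summing yields $|\hat{\mathbf{G}}(\vec{h},\vec{h}')|\leq Ct^{-1-\sigma/(4\sqrt{2})}$ uniformly for $\vec{h}\in\Lambda$.

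For the Lipschitz inequality I would replay the same four-piece decomposition for the difference $\hat{G}_i(\vec{h}^1,(\vec{h}^1)')-\hat{G}_i(\vec{h}^2,(\vec{h}^2)')$, now tracking the dependence on $\vec{h}$ through two channels: explicitly through the approximate solution $z(t,x)$, the correctors $\xi_j,\widehat{\xi}_j$, and the shifted weight $\omega'(x-\gamma_i(t))$; and implicitly through the fixed point $\phi(t,x,\vec{h})$ produced by Proposition \ref{prop3}. The implicit variation is controlled by \eqref{eq4.3}, which gives $\|\phi(\cdot,\vec{h}^1)-\phi(\cdot,\vec{h}^2)\|_{X_{T_1}}\leq C\,T_1^{-\sigma/(16\sqrt{2})}\|\vec{h}^1-\vec{h}^2\|_{\Lambda}$. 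For the explicit variation, Lemma \ref{lem8} delivers \eqref{eq4.5} for the error term, and differentiating the closed-form expressions \eqref{dxi} and \eqref{dxi1} in $\gamma_j$ provides an analogous Lipschitz bound for $\xi_j,\widetilde{\xi}_j$, together with their spatial and time derivatives. Substituting these Lipschitz estimates in place of the absolute bounds used in Sections \ref{section5.1.1}--\ref{section5.1.2} preserves every decay exponent and introduces a single factor $\|\vec{h}^1-\vec{h}^2\|_{\Lambda}$, giving the claimed inequality.

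The principal obstacle is the bookkeeping for the Lipschitz dependence of the correctors and the coefficients $d_i(t)$: one must check that the kernels defining $\xi_j$ and $\widetilde{\xi}_j$ satisfy Lipschitz estimates with the same decay rate that controls them in absolute value (Lemmas \ref{esxi}--\ref{lem33}), and that the shift of the integration interval $J_i(t)$ appearing in Step~1 of Section \ref{section5.1.1} contributes only a boundary error of order $\|\vec{h}^1-\vec{h}^2\|_{\Lambda}$ multiplied by an exponentially small quantity, which is absorbed into the error. For $d_i(t)$ the Lipschitz dependence follows by differentiating the formula \eqref{ddt} in $\gamma_j$ and invoking the Taylor-expansion estimates that underpin Lemma \ref{lem33}. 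Once these verifications are in place, Proposition \ref{prop9} follows by assembling the Lipschitz analogues of \eqref{dEEE} and of the bounds in Section \ref{section5.1.2} exactly as in Section \ref{section5.1.3}.
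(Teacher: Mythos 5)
Your proposal is correct and follows essentially the same route as the paper: the paper itself gives no separate proof of Proposition \ref{prop9}, deducing it directly from the decomposition of \eqref{eq7.12} in Sections \ref{section5.1.1}--\ref{section5.1.2} (culminating in \eqref{dEEE} and the $O(t^{-3/2})$, $O(t^{-1-\sigma/(4\sqrt{2})})$ bounds for the nonlinear and $F'(z)[\phi]$ projections), together with Proposition \ref{prop3} for the dependence of $\phi$ on $\vec{h}$ and Lemmas \ref{lem10} and \ref{lem8} for the Lipschitz variation of the error term. Your identification of the four pieces of $\hat{G}_i$ and of the two channels (explicit through $z$, $\xi_j$, $\widehat{\xi}_j$, $d_i$ and implicit through the fixed point $\phi(t,x,\vec{h})$ via \eqref{eq4.3}) is exactly the bookkeeping the paper relies on.
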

 In Sections \ref{section5.2}-\ref{section5.3}, we will prove that the Toda system (\ref{system1}) is solvable by using some ideas in the references \cite{dG1,dG2}.

\subsection{The choice of $\vec{\gamma}^0$}\label{section5.2}
Let $k\geq2$.
In order to solve the Toda system \eqref{system1}, we first study the solvability of the following Toda system:
\begin{align}\label{system2}
\nonumber \frac{2\sqrt{2}}{3}\gamma'_i(t)=R_i(t):=
&384\Big\{
\,-\,e^{-\sqrt{2}\{\gamma_{i}(t)-\gamma_{i-2}(t)\}}
\,+\,2e^{-2\sqrt{2}\{\gamma_i(t)-\gamma_{i-1}(t)\}}
\\
&\qquad \qquad \,-\,2e^{-2\sqrt{2}\{\gamma_{i+1}(t)-\gamma_i(t)\}} \,+\,e^{-\sqrt{2}\{\gamma_{i+2}(t)-\gamma_i(t)\}}\Big\},
\end{align}
with $i=1,\cdots,k$, where we assume that
$$
\gamma_{-1}(t)=\gamma_0(t)=-\infty
\quad\mbox{ and }\quad
\gamma_{k+1}(t)=\gamma_{k+2}(t)=+\infty.
$$
The main result is given by the following lemma.

\begin{lem}\label{lem11}
 Let $k\geq2$ and the constants
\begin{align*}
-a_i=a_{k-i+1}:=\frac{1}{2\sqrt{2}}\sum_{l=i+1}^{k-i+1}\ln \left[\frac{(l-1)(k-l+1)}{2}\right],\qquad \text{for all}\ \ 2i\leq k,
\end{align*}
and $ a_{\frac{k+1}{2}}=0$ when $k$ is an odd number.
Then the above Toda system \eqref{system2} has a solution $\vec{\gamma}^0=\big[\gamma_1^0(t),\cdots,\gamma_k^0(t)\big]^\top$ with
\begin{equation}\label{eq8.4}
  \gamma^0_i(t):=\frac{1}{2\sqrt{2}}\left(i-\frac{k+1}{2}\right)\ln\left[1152t\right]+a_i,
\end{equation}
with $i=1,\cdots,k$.
\end{lem}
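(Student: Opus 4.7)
My plan is to substitute the ansatz $\gamma_i^0(t)=\frac{1}{2\sqrt{2}}\bigl(i-\frac{k+1}{2}\bigr)\ln[1152\,t]+a_i$ into the Toda system \eqref{system2} and reduce it to a $t$-independent algebraic system in the constants $\{a_i\}$. The key observation is that with this ansatz every difference $\gamma_p^0-\gamma_q^0$ splits as $\frac{p-q}{2\sqrt{2}}\ln[1152\,t]+(a_p-a_q)$, so each exponential on the right-hand side of \eqref{system2} picks up the same prefactor $(1152\,t)^{-1}$:
\begin{equation*}
e^{-2\sqrt{2}(\gamma_p^0-\gamma_q^0)}=\frac{1}{1152\,t}\,e^{-2\sqrt{2}(a_p-a_q)}\quad\text{when } |p-q|=1,
\end{equation*}
\begin{equation*}
e^{-\sqrt{2}(\gamma_p^0-\gamma_q^0)}=\frac{1}{1152\,t}\,e^{-\sqrt{2}(a_p-a_q)}\quad\text{when } |p-q|=2.
\end{equation*}
Meanwhile the left-hand side evaluates to $\frac{2\sqrt{2}}{3}(\gamma_i^0)'=\frac{1}{3t}\bigl(i-\frac{k+1}{2}\bigr)$. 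Since $\frac{384\cdot 3}{1152}=1$, matching the common $t^{-1}$ reduces \eqref{system2} to the purely algebraic identity
\begin{equation*}
i-\tfrac{k+1}{2}\,=\,-e^{-\sqrt{2}(a_i-a_{i-2})}+2\,e^{-2\sqrt{2}(a_i-a_{i-1})}-2\,e^{-2\sqrt{2}(a_{i+1}-a_i)}+e^{-\sqrt{2}(a_{i+2}-a_i)}
\end{equation*}
for $i=1,\dots,k$.

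Introducing $x_i:=e^{-\sqrt{2}(a_i-a_{i-1})}$ for $2\le i\le k$, together with the boundary conventions $x_1=x_{k+1}=0$ (forced by $a_0=a_{-1}=-\infty$, $a_{k+1}=a_{k+2}=+\infty$), and using the telescoping relation $e^{-\sqrt{2}(a_i-a_{i-2})}=x_i x_{i-1}$, this simplifies further to
\begin{equation*}
i-\tfrac{k+1}{2}\,=\,-x_{i-1}x_i+2x_i^2-2x_{i+1}^2+x_{i+1}x_{i+2},\qquad i=1,\dots,k.
\end{equation*}
Solving this by inspection using the boundary conditions and the symmetry $x_i\leftrightarrow x_{k-i+2}$ (mirroring the symmetry $\gamma_i=-\gamma_{k-i+1}$ built into the ansatz) leads to the closed form $x_i=\frac{(i-1)(k-i+1)}{2}$, which, upon unwinding $x_i=e^{-\sqrt{2}(a_i-a_{i-1})}$ and telescoping, produces exactly the formula for $a_i$ stated in the lemma together with the symmetry $a_i=-a_{k-i+1}$.

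The only real computation is verifying that $x_i=\frac{(i-1)(k-i+1)}{2}$ satisfies the displayed algebraic system. I would set $u=i-1$ and $v=k-i+1$, so that $u+v=k$, $x_i=uv/2$, $x_{i-1}=(u-1)(v+1)/2$, $x_{i+1}=(u+1)(v-1)/2$, $x_{i+2}=(u+2)(v-2)/2$, and $i-\frac{k+1}{2}=\frac{u-v+1}{2}$. Expanding the four quadratic products on the right-hand side, I expect every monomial of combined degree $\ge 2$ in $(u,v)$ to cancel, leaving precisely $\frac{1}{2}(u-v+1)$. This bookkeeping will be the main obstacle, but it is a routine polynomial identity. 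The boundary cases $i\in\{1,2,k-1,k\}$ require no separate treatment: the formula $x_i=\frac{(i-1)(k-i+1)}{2}$ automatically gives $x_1=0=x_{k+1}$, which kills any product that would otherwise reference $x_0$, $x_{-1}$, $x_{k+2}$, or $x_{k+3}$, so the single polynomial identity established for generic $i$ covers every index simultaneously.
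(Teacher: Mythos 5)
Your proposal is correct and is essentially the paper's own argument: the paper makes the same logarithmic ansatz (phrased for the gaps $\eta_i=\gamma_i-\gamma_{i-1}$ rather than for the $\gamma_i$ themselves, which is equivalent to your system upon taking consecutive differences and using $\sum_j\gamma_j^0=0$), arrives at the same quadratic system for the $x_i$, and likewise solves it by the closed form $2x_i=(i-1)(k-i+1)$ without writing out the polynomial verification — a verification I confirmed does reduce, in your $(u,v)$ variables, to $4\times\mathrm{RHS}=2(u-v+1)$. One caveat: carefully unwinding $a_i-a_{i-1}=-\tfrac{1}{\sqrt{2}}\ln x_i$ yields the stated constants with $a_i$ and $a_{k-i+1}$ interchanged (for $k=2$ one finds $a_2=\tfrac{\ln 2}{2\sqrt{2}}>0$ rather than $\tfrac{1}{2\sqrt{2}}\ln\tfrac{1}{2}<0$), so the sign convention in the lemma's displayed formula appears to be a typo that your telescoping — unlike your closing claim that it reproduces the formula ``exactly'' — would silently correct.
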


\begin{proof}
Let $\vec{\gamma}^0=\big[\gamma^0_1(t),\cdots,\gamma^0_k(t)\big]^\top$ be a solution of \eqref{system2} and $\eta^0_l(t):=\gamma^0_{l}(t)-\gamma^0_{l-1}(t)$ for $l=2,\cdots,k$. Then according to system \eqref{system2}, the functions $\eta^0_i(t)$ with $2\leq i\leq k$ satisfy that
\begin{align}\label{system3}
\nonumber\frac{2\sqrt{2}}{3}\eta'_{i}(t)=&R_{i+1}(t)-R_i(t)
\\[2mm]
=&\nonumber384\Big\{-2e^{-2\sqrt{2}\eta_{i-1}(t)}+4e^{-2\sqrt{2}\eta_{i}(t)}-2e^{-2\sqrt{2}\eta_{i+1}(t)}-e^{-\sqrt{2}\{\eta_{i}(t)+\eta_{i+1}(t)\}}
\\[2mm]
&\quad+e^{-\sqrt{2}\{\eta_{i-1}(t)+\eta_{i-2}(t)\}}-e^{-\sqrt{2}\{\eta_{i}(t)+\eta_{i-1}(t)\}}+e^{-\sqrt{2}\{\eta_{i+1}(t)+\eta_{i+2}(t)\}}\Big\},
\end{align}
where we assume that $\eta_0(t)=\eta_1(t)=\eta_{k+1}(t)=\eta_{k+2}(t)=+\infty$.

We want to find a solution of \eqref{system3} of the form
$$
\eta_i(t)=\frac{1}{2\sqrt{2}}\ln\big[1152t\big]-\frac{1}{\sqrt{2}}\ln x_i,
\quad i=2,\cdots,k.
$$
Thus the constants $x_2,\cdots,x_k$ satisfy that the following system
\begin{align}\label{system5}
\left\{
\begin{array}{l}
4(x_2)^2-2(x_3)^2-x_2x_3+x_3x_4=1,
\\[2mm]-2(x_2)^2+4(x_3)^2-2(x_4)^2-x_3x_4-x_2x_3+x_4x_5=1,
\\[2mm]-2(x_3)^2+4(x_4)^2-2(x_5)^2-x_4x_5+x_2x_3-x_3x_4+x_5x_6=1,
\\[2mm]\cdots\cdots
\\[2mm]-2(x_{i-1})^2+4(x_i)^2-2(x_{i+1})^2-x_ix_{i+1}+x_{i-1}x_{i-2}-x_{i-1}x_i+x_{i+1}x_{i+2}=1,
\\[2mm]\cdots\cdots
\\[2mm]-2(x_{k-2})^2+4(x_{k-1})^2-2(x_k)^2-x_{k-1}x_{k}+x_{k-3}x_{k-2}-x_{k-1}x_{k-2}=1,
\\[2mm]-2(x_{k-1})^2+4(x_k)^2+x_{k-2}x_{k-1}-x_{k-1}x_k=1.
\end{array}
\right.
\end{align}
It is easy to check that the above system has a unique positive solution
\begin{align}\label{dxhat}
2x_i^0=2x_{k+2-i}^0=(i-1)k-\left(i-1\right)^2=\left(i-1\right)\left(k-i+1\right),
\end{align}
with $i=2,\cdots,k$. Hence, we get that system \eqref{system3} has a solution
\begin{align*}
  \eta^0_i(t):=\frac{1}{2\sqrt{2}}\ln\big[1152t\big]-\frac{1}{\sqrt{2}}\ln \left[\frac{(i-1)(k-i+1)}{2}\right],
\end{align*}
with $i=2,\cdots,k$.

Furthermore, using the facts that $\eta^0_l(t)=\gamma^0_{l}(t)-\gamma^0_{l-1}(t)$ for all $l=2,\cdots,k$,  and

$$\sum_{j=1}^k\gamma_j^0(t)=0,$$
we obtain that system \eqref{system2} has a solution of the form
\begin{align*}
  \gamma^0_i(t)=\frac{1}{2\sqrt{2}}\left(i-\frac{k+1}{2}\right)\ln\left[1152t\right]+a_i,
\end{align*}
where the constants $a_1,\cdots,a_k$ are defined by
\begin{align*}
 -a_i=a_{k-i+1}=\frac{1}{2\sqrt{2}}\sum_{l=i+1}^{k-i+1}\ln \left[\frac{(l-1)(k-l+1)}{2}\right],\qquad \text{for}\ \ 2i\leq k,
\end{align*}
and when $k$ is odd, we choose
  $ a_{\frac{k+1}{2}}=0.$ The proof is completed.
\end{proof}

\subsection {The solvability of the reduced system}\label{section5.3}
The resolution theory of system \eqref{system1} will be divided into several steps.

\noindent{\textbf{Step 1.}}
Keeping the notation of the previous subsection, we look for a solution of the form $\vec{\gamma}=\vec{\gamma}^0+\vec{h}$ to system \eqref{system1}. Then the function $\vec{h}$ satisfies that
\begin{align}\label{system4}
\nonumber \vec{h}'+D_{\vec{\gamma}}\mathbf{R}(\vec{\gamma}^0)\vec{h}
=&\mathbf{G}\big(\vec{\gamma}^0+\vec{h}, (\vec{\gamma}^0)'+\vec{h}'\big)
+D_{\vec{\gamma}}\mathbf{R}(\vec{\gamma}^0)\vec{h}-\mathbf{R}(\vec{\gamma}^0)
\\[2mm]
=&\mathbf{Q}(\vec{h},\vec{h}')\qquad \text{in}\ [T_0,+\infty),
\end{align}
where the matrices
$$
\mathbf{G}(\vec{\gamma},\vec{\gamma}'):=\big[G_1(\vec{\gamma},\vec{\gamma}'),\cdots,G_k(\vec{\gamma},\vec{\gamma}')\big]^\top \qquad\text{and} \qquad
\mathbf{R}(\vec{\gamma}):=\big[R_1(\vec{\gamma}),\cdots,R_k(\vec{\gamma})\big]^\top.
$$
In the above, the functions $G_i(\vec{\gamma},\vec{\gamma}')$
and $R_i(\vec{\gamma})$ are given by \eqref{system1} and \eqref{system2} respectively.
 Recall that the functions $\gamma_1(t),\cdots,\gamma_k(t)$ satisfy that
 $$
 \gamma_i(t)=-\gamma_{\frac{k+1}{2}+i}(t),\qquad i\leq\frac{k+1}{2},
 $$
 so the functions $h_1,\cdots,h_k(t)$ do.
 This implies that the solution $\phi(t,x)$ to \eqref{eq7.7}-\eqref{eq7.8} is even with respect to $x$ and
 thus we obtain that
 \begin{align}\label{eq8.1}
 Q_i(\vec{h},\vec{h}')=-Q_{\frac{k+1}{2}+i}(\vec{h},\vec{h}')\qquad \text{for all}\ i\leq\frac{k+1}{2}.
 \end{align}

\noindent{\textbf{Step 2.}}
We intend to solve system \eqref{system4}.
Let

\begin{equation*}
 \mathcal{B}:=\left(
      \begin{array}{cccccc}
        -1 & 1 & 0 & 0&\cdots & 0 \\
        0 & -1 & 1  &0&\cdots & 0 \\
       \vdots & \ddots & \ddots & \ddots&\ddots&\vdots \\
       0&\cdots&0&-1&1&0\\
        0 & \cdots &0& 0 & -1 & 1 \\
        1 & \cdots &1&1 &1 & 1 \\
      \end{array}
    \right)_{k\times k},\ \
    \overline{\mathcal{B}}:=\left(
      \begin{array}{cccccc}
        -1 & 1 & 0 & \cdots & 0&0 \\
        0 & -1 & 1 & \cdots & 0&0 \\
       \vdots & \ddots & \ddots & \ddots & \vdots&\vdots \\
        0 & \cdots & 0 & -1 & 1&0 \\
        0 & \cdots & 0 &0 & -1&1 \\
      \end{array}
    \right)_{(k-1)\times k},
 \end{equation*}

\noindent and
$$
\vec{p}:=\mathbf{\mathcal{B}}\vec{h}.
$$
Then $\vec{p}=\big[p_1(t),\cdots,p_k(t)\big]^\top$ satisfies that
\begin{align}\label{eq8.2}
\vec{p}\,'+D_{\vec{\vartheta}}\mathbf{S}\big(\vec{\vartheta}^0\big)\vec{p}
=\mathbf{\mathcal{B}}\mathbf{Q}\left(\mathbf{\mathcal{B}}^{-1}\vec{p},\mathbf{\mathcal{B}}^{-1}\vec{p}\,'\right)\qquad\text{in}\ [T_0,+\infty),
\end{align}
 where the matrices

 \begin{align*}
\mathbf{S}(\vec{\vartheta}):=\mathbf{\mathcal{B}}\mathbf{R}(\vec{\gamma}),\qquad \vec{\vartheta}:=\mathbf{\mathcal{B}}\vec{\gamma}\quad \text{and}\quad\vec{\vartheta}^0:=\mathbf{\mathcal{B}}\vec{\gamma}^0.
\end{align*}

\noindent
Thus by the relations in \eqref{eq8.1}, we have that system  \eqref{eq8.2} is equivalent to
 \begin{align}\label{eq8.3}
 \left\{
\begin{array}{c}
\ \overline{p}\,' \, +\, D_{\overline{\vartheta}}\overline{\mathbf{S}}\big(\overline{\vartheta}^0\big)\overline{p}
=\mathbf{M}(\vec{p},\vec{p}\,')\qquad\text{in}\ [T_0,+\infty),
\\[2mm]
\qquad p'_k(t)=0\qquad\text{in}\ [T_0,+\infty),
\end{array}
\right.
\end{align}
where the matrices
$$
\overline{p}:=\big[p_1(t),\cdots,p_{k-1}(t)\big]^\top,
\qquad
\overline{\vartheta}:=\overline{\mathbf{\mathcal{B}}}\vec{\gamma},
\qquad
\overline{\vartheta}^0:=\overline{\mathbf{\mathcal{B}}}\vec{\gamma}^0,
$$
and
$$
\overline{\mathbf{S}}(\overline{\vartheta}):=\overline{\mathbf{\mathcal{B}}}\mathbf{R}(\vec{\gamma}),
\qquad
\mathbf{M}(\vec{p},\vec{p}\,'):=\overline{\mathbf{\mathcal{B}}}\mathbf{Q}\big(\mathbf{\mathcal{B}}^{-1}\vec{p},\mathbf{\mathcal{B}}^{-1}\vec{p}\,'\big).
$$
For the sake of simplicity, we choose the function $p_k=0$ in system \eqref{eq8.3}.

\noindent{\textbf{Step 3.}}
We now consider system \eqref{eq8.3}. By the expression of $R_{i+1}(t)-R_i(t)$ in \eqref{system3} and the definition of $\gamma_i^0(t)$ in \eqref{eq8.4}, we have that
\begin{align*}
D_{\overline{\vartheta}}\overline{\mathbf{S}}\big(\overline{\vartheta}^0\big)=-\frac{1}{2t}\mathbf{\mathcal{H}}=-\frac{1}{2t}\left[H_{lj}\right]_{(k-1)\times(k-1)}=-\frac{1}{2t}\left[\frac{\partial H_{l+1}(\widehat{x})}{\partial x_{j+1}}x_{j+1}\Bigg{|}_{\widehat{x}=\widehat{x}^0}\right]_{(k-1)\times(k-1)},
\end{align*}
where
$$
\widehat{x}:=\big[x_2,\cdots,x_k\big]^\top,
\qquad
\widehat{x}^0:=\big[x_2^0,\cdots,x_k^0\big]^\top,
$$
with the number $x_i^0$ defined by \eqref{dxhat}, and
$$
H_i(\widehat{x}):=-2(x_{i-1})^2+4(x_i)^2-2(x_{i+1})^2-x_ix_{i+1}+x_{i-1}x_{i-2}-x_{i-1}x_i+x_{i+1}x_{i+2},
$$
with $i=2,\cdots,k$, and with the conventions that
$$
x_0=x_1=x_{k+1}=x_{k+2}=0.
$$
By some straightforward computations, we get that the eigenvalues of $\mathbf{\mathcal{H}}$
 are explicitly given by
 \begin{align*}
 \lambda_1=1\times 2,\ \ \lambda_2=3\times 4,\ \ \lambda_3=6\times 7,\ \ \lambda_4= 10\times 11,\ \cdots,\ \lambda_{k-1}=\frac{(k-1)k}{2}\times\left[\frac{(k-1)k}{2}+1\right].
 \end{align*}
Hence there exists an invertible matrix $\mathbf{\mathcal{D}}$ such that
\begin{align*}
\mathbf{\mathcal{D}}\mathbf{\mathcal{H}}\mathbf{\mathcal{D}}^{-1}=\mathbf{\mathcal{C}}:=\text{diag}\big\{\lambda_1,\lambda_2,\cdots,\lambda_{k-1}\big\}.
\end{align*}

 Let $\overline{q}:=\mathbf{\mathcal{D}}\overline{p}$. The first equation in \eqref{eq8.3} becomes equivalent to
\begin{align}\label{eq8.5}
\overline{q}\,'-\frac{1}{2t}\mathcal{C}\overline{q}=\mathbf{\mathcal{D}}\mathbf{M}\left(\mathbf{\mathcal{D}}^{-1}\overline{q},\ \mathbf{\mathcal{D}}^{-1}\overline{q}\,'\right)
\qquad\text{in}\ [T_0,+\infty),
\end{align}
where we have used the fact that $p_k(t)=0$.

 \noindent{\textbf{Step 4.}}
We will solve system \eqref{eq8.5} by applying the fixed-point theorem.
It is easy to check that $\overline{q}(t)=\big[q_1(t), \cdots, q_{k-1}(t))\big]^\top$ is a bounded solution of problem \eqref{eq8.5}  if and only if $\overline{q}$ satisfies that
\begin{equation}\label{eq8.6}
 q_i(t)=-t^{\frac{\lambda_i}{2}}\int^{+\infty}_ts^{-\frac{\lambda_i}{2}}{\mathcal J}_i\big(\overline{q}(s),\overline{q}\,'(s)\big) \,{\mathrm d}s\qquad \text{for all}\ i=1,\cdots,k-1,
\end{equation}
for all $t\geq T_0$, where we denote the $i$-th  component of the vector $\mathbf{\mathcal{D}}\mathbf{M}\left(\mathbf{\mathcal{D}}^{-1}\overline{q},\mathbf{\mathcal{D}}^{-1}\overline{q}\,'\right)$ by ${\mathcal J}_i\big(\overline{q},\overline{q}\,'\big)$.

Let $\mathbf{A}(\overline{q}):=\big[A_1(\overline{q}),\cdots,A_{k-1}(\overline{q})\big]^\top$ be a solution of \eqref{eq8.6}. By Proposition \ref{prop9} and \eqref{eq8.6}, we have that
\begin{equation}\label{eq8.7}
  \abs{\mathbf{A}(0)}\leq C_1\big[T_0\big]^{-\frac{\sigma}{4\sqrt{2}}}\quad \text{and}\quad \abs{t\mathbf{A}'(0)}\leq C_2\big[T_0\big]^{-\frac{\sigma}{4\sqrt{2}}},
\end{equation}
for all $t\geq T_0$, where the parameter $\sigma\in\left(0,\sqrt{2}\right)$, the constants $C_1, C_2>0$ are independent of $t$ and $T_0$,  and
\begin{equation*}
  \abs{\mathbf{A}(\overline{q})}:=\sum_{j=1}^{k-1}\abs{A_j(\overline{q})}.
\end{equation*}

Let $T_0>1$. We define a space
\begin{equation*}
  X:=\left\{\overline{q}(t)=\left[q_1(t),\cdots,q_{k-1}(t)\right]^\top\,:\  q_i(t)\in C^1[T_0,+\infty)\ \text{and}\ \sup_{t\geq0} \left[\abs{q_i(t)}+\abs{tq'_i(t)}\right]<+\infty \right\},
\end{equation*}
and its a closed subset
\begin{equation*}
  S_{c_0}:=\left\{\overline{q}(t)\in X\ :\ \|\overline{q}\|_\Lambda=\sum_{j=1}^{k-1}\sup_{t\geq T_0}\left[\abs{q_j(t)}+\abs{tq'_j(t)}\right]<2c_0 \right\},
\end{equation*}
where the constant $c_0:=C_1+C_2$ with the constants $C_1,C_2$ in \eqref{eq8.7}.  A similar argument in \eqref{eq8.7} yields that
\begin{align*}
\abs{\mathbf{A}\big(\overline{q}_1(t)\big)-\mathbf{A}\big(\overline{q}_2(t)\big)}\leq C\big[T_0\big]^{-\frac{\sigma}{4\sqrt{2}}}\|\overline{q}_1-\overline{q}_2\|_\Lambda,
\end{align*}
and
\begin{align*}
\abs{t}\abs{\mathbf{A}'\big(\overline{q}_1(t)\big)-\mathbf{A}'\big(\overline{q}_2(t)\big)}\leq C\big[T_0\big]^{-\frac{\sigma}{4\sqrt{2}}}\|\overline{q}_1-\overline{q}_2\|_\Lambda,
\end{align*}
for all $t\geq T_0$ and $\overline{q}_1,\overline{q}_2\in S_{c_0}$, where $C>0$ does not depend on $\overline{q}_1,\overline{q}_2$, $t$ and $T_0$.
Thus we get that
\begin{align*}
\|\mathbf{A}\big(\overline{q}_1(t)\big)-\mathbf{A}\big(\overline{q}_2(t)\big)\|_\Lambda\leq C\big[T_0\big]^{-\frac{\sigma}{4\sqrt{2}}}\|\overline{q}_1-\overline{q}_2\|_\Lambda,
\end{align*}
where $C>0$ only depends on $\sigma$. Hence Banach fixed-point theorem implies that problem \eqref{eq8.6} has a solution $\overline{p}\in S_{c_0}$ if we choose $T_0$ big enough.
Furthermore, according to \eqref{eq8.6}, there holds that
\begin{equation*}
  \abs{\overline{p}(t)}\leq Ct^{-\frac{\sigma}{4\sqrt{2}}}\qquad \text{as}\quad t\rightarrow+\infty,
\end{equation*}
where $\sigma\in\left(0,\sqrt{2}\right)$.

\section{Appendices}\label{appendix}
\subsection{The proof of Lemma \ref{esxi}}\label{section6.1}

By the definition of $\xi_i(t,x)$ in \eqref{dxi} and the integration by parts, we have that
\begin{align}\label{exi}
  \nonumber\xi_i(t,x)&=
  -12\sqrt{2}e^{\sqrt{2}x}\int_{\frac{\gamma_i(t)+\gamma_{i+1}(t)}{2}}^xe^{-2\sqrt{2}s}\int_0^{e^{\sqrt{2}s}}\frac{ye^{\sqrt{2}\gamma_i(t)}}{\big(y+e^{\sqrt{2}\gamma_i(t)}\big)
  \big(y+e^{\sqrt{2}\gamma_{i+1}(t)}\big)}\,{\mathrm d}y \,{\mathrm d}s
  \\[2mm]&\nonumber\qquad+12e^{\sqrt{2}[x-\gamma_i(t)-\gamma_{i+1}(t)]}\int_{0}^{e^{\frac{1}{\sqrt{2}}[\gamma_i(t)+\gamma_{i+1}(t)]}}\frac{ye^{\sqrt{2}\gamma_i(t)}}{\big(y+e^{\sqrt{2}\gamma_i(t)}\big)
  \big(y+e^{\sqrt{2}\gamma_{i+1}(t)}\big)}\,{\mathrm d}y
  \\[2mm]&\nonumber=6e^{-\sqrt{2}x}\int_0^{e^{\sqrt{2}x}}\frac{ye^{\sqrt{2}\gamma_i(t)}}{\big(y+e^{\sqrt{2}\gamma_i(t)}\big)
  \big(y+e^{\sqrt{2}\gamma_{i+1}(t)}\big)}\,{\mathrm d}y
  \\[2mm]&\nonumber\qquad+6e^{\sqrt{2}[x-\gamma_i(t)-\gamma_{i+1}(t)]}\int_{0}^{e^{\frac{1}{\sqrt{2}}[\gamma_i(t)+\gamma_{i+1}(t)]}}\frac{ye^{\sqrt{2}\gamma_i(t)}}{\big(y+e^{\sqrt{2}\gamma_i(t)}\big)
  \big(y+e^{\sqrt{2}\gamma_{i+1}(t)}\big)}\,{\mathrm d}y
  \\[2mm]&\nonumber\qquad-6\sqrt{2}e^{\sqrt{2}x}\int_{\frac{\gamma_i(t)+\gamma_{i+1}(t)}{2}}^x\frac{e^{\sqrt{2}\gamma_i(t)}}{\big(e^{\sqrt{2}\tau}+e^{\sqrt{2}\gamma_i(t)}\big)
  \big(e^{\sqrt{2}\tau}+e^{\sqrt{2}\gamma_{i+1}(t)}\big)}d\tau
  \\[2mm]&\nonumber=6e^{-\sqrt{2}x}\int_{0}^{e^{\sqrt{2}x}}\frac{ye^{\sqrt{2}\gamma_i(t)}}{\big(y+e^{\sqrt{2}\gamma_i(t)}\big)
  \big(y+e^{\sqrt{2}\gamma_{i+1}(t)}\big)}\,{\mathrm d}y
  \\[2mm]&\nonumber\qquad+6e^{\sqrt{2}\big[x-\gamma_i(t)-\gamma_{i+1}(t)\big]}\int_{0}^{e^{\sqrt{2}[\gamma_i(t)+\gamma_{i+1}(t)-x]}}\frac{ye^{\sqrt{2}\gamma_i(t)}}{\big(y+e^{\sqrt{2}\gamma_i(t)}\big)
  \big(y+e^{\sqrt{2}\gamma_{i+1}(t)}\big)}\,{\mathrm d}y
  \\[2mm]&\nonumber=\frac{6}{e^{\sqrt{2}[\gamma_{i+1}(t)-\gamma_i(t)]}-1}\bigg\{e^{-\sqrt{2}\{x-\gamma_{i+1}(t)\}}\ln\left(1+e^{\sqrt{2}\{x-\gamma_{i+1}(t)\}}\right)
  \\[2mm]&\nonumber\qquad-e^{-\sqrt{2}\{x-\gamma_i(t)\}}\ln\left(1+e^{\sqrt{2}\{x-\gamma_i(t)\}}\right)+e^{\sqrt{2}\{x-\gamma_i(t)\}}\ln\left(1+e^{-\sqrt{2}\{x-\gamma_i(t)\}}\right)
  \\[2mm]&\qquad-e^{\sqrt{2}\{x-\gamma_{i+1}(t)\}}\ln\left(1+e^{-\sqrt{2}\{x-\gamma_{i+1}(t)\}}\right)\bigg\}.
\end{align}
Therefore according to the expressions in the above, we can easily get the desired results in Lemma \ref{esxi}.
\qed

\subsection{The proof of  Lemma \ref{lem33}}\label{section6.2}
We firstly study the asymptotic expansion of the functions $d_i(t)$. Using the definition of $d_i(t)$ in \eqref{ddt} and the integration by parts, there holds that
\begin{align*}
&d_i(t)=\frac{\int_{\mathbb R} \Big[\widehat{\textrm{I}}_{1,i}\big(t,x+\gamma_i(t)\big)+\widehat{\textrm{I}}_{2,i}\big(t,x+\gamma_i(t)\big)\Big]\omega'(x)\mathrm{d}x}{\int_{\mathbb R}\big(\omega'(x)\big)^2\mathrm{d}x}
\\[2mm]
&=\frac{9\sqrt{2}}{4}\bigg\{\int_{\mathbb R}\left[\big(1-\omega(x)\big)^2\big(1+\omega\big(x+\gamma_i(t)-\gamma_{i+1}(t)\big)\big)-\big(1-\omega^2(x)\big)\xi_i\big(t,x+\gamma_i(t)\big)\right]\omega'(x)\,{\mathrm d}x
\\[2mm]
&\quad+\int_{\mathbb R}\left[\big(1+\omega(x)\big)^2\big(\omega\big(x+\gamma_i(t)-\gamma_{i-1}(t)\big)-1\big)+\big(1-\omega^2(x)\big)\xi_{i-1}\big(t,x+\gamma_i(t)\big)\right]\omega'(x)\,{\mathrm d}x\bigg\}
\\[2mm]
&=\frac{9\sqrt{2}}{4}\bigg\{\frac{1}{3}\int_{\mathbb R}\omega'\big(x+\gamma_i(t)-\gamma_{i+1}(t)\big)\left[1-\omega(x)\right]^3\,{\mathrm d}x-\int_{\mathbb R}\left[1-\omega^2(x)\right]\xi_i\big(t,x+\gamma_i(t)\big)\omega'(x)\,{\mathrm d}x
\\[2mm]
&\quad-\frac{1}{3}\int_{\mathbb R}\omega'\big(x+\gamma_i(t)-\gamma_{i-1}(t)\big)\left[1+\omega(x)\right]^3\,{\mathrm d}x+\int_{\mathbb R}\left[1-\omega^2(x)\right]\xi_{i-1}\big(t,x+\gamma_i(t)\big)\omega'(x)\,{\mathrm d}x\bigg\}.
\end{align*}

Next we will estimate these terms in the last equality. Let $\eta_{i+1}(t):=\gamma_{i+1}-\gamma_i(t)$ with $i=0,\cdots,k$.
According to the definition of $\omega(x)$ in \eqref{domega} and integration by parts, we have that
\begin{align*}
  &\int_{\mathbb R}\omega'\big(x+\gamma_i(t)-\gamma_{i+1}(t)\big)\left[1-\omega(x)\right]^3\,{\mathrm d}x
\\[2mm]&=16e^{\sqrt{2}\eta_{i+1}(t)}\int_0^\infty\frac{1}{\big(1+y\big)^3\Big(e^{\sqrt{2}\eta_{i+1}(t)}+y\Big)^2}\,{\mathrm d}y
\\[2mm]&=8e^{-\sqrt{2}\eta_{i+1}(t)}-16e^{\sqrt{2}\eta_{i+1}(t)}\int_0^\infty\frac{1}{\big(1+y\big)^2\Big(e^{\sqrt{2}\eta_{i+1}(t)}+y\Big)^3}\,{\mathrm d}y
\\[2mm]&=8e^{-\sqrt{2}\eta_{i+1}(t)}+O\left(e^{-2\sqrt{2}\eta_{i+1}(t)}\right),
\end{align*}
and
\begin{align*}
  &\int_{\mathbb R}\omega'\big(x+\gamma_i(t)-\gamma_{i-1}(t)\big)\left[1+\omega(x)\right]^3\,{\mathrm d}x
\\[2mm]&=16e^{\sqrt{2}\eta_i(t)}\int_0^\infty\frac{1}{\big(1+y\big)^3\big(e^{\sqrt{2}\eta_i(t)}+y\big)^2}\,{\mathrm d}y
\\[2mm]&=8e^{-\sqrt{2}\eta_i(t)}-16e^{\sqrt{2}\eta_i(t)}\int_0^\infty\frac{1}{\big(1+y\big)^2\big(e^{\sqrt{2}\eta_i(t)}+y\big)^3}\,{\mathrm d}y
\\[2mm]&=8e^{-\sqrt{2}\eta_i(t)}+O\left(e^{-2\sqrt{2}\eta_i(t)}\right).
\end{align*}

Furthermore, by the definition of $\xi_i(t,x)$ in \eqref{dxi}, we get that
\begin{align*}
   &\int_{\mathbb R}\xi_i\big(t,x+\gamma_i(t)\big)\left[1-\omega^2(x)\right]\omega'(x)\,{\mathrm d}x
   \\[2mm]&=\frac{48}{e^{\sqrt{2}\eta_{i+1}(t)}-1}
   \int_0^\infty\frac{y}{\big(1+y\big)^4}\left[\frac{e^{\sqrt{2}\eta_{i+1}(t)}}{y}\ln\left(1+\frac{y}{e^{\sqrt{2}\eta_{i+1}(t)}}\right)-\frac{1}{y}\ln\left(1+y\right)\right]\,{\mathrm d}y
\\[2mm]&\quad+\frac{48}{e^{\sqrt{2}\eta_{i+1}(t)}-1}
   \int_0^\infty\frac{y}{\big(1+y\big)^4}\left[\frac{1}{y}\ln\left(1+y\right)-\frac{1}{e^{\sqrt{2}\eta_{i+1}(t)}y}\ln\left(1+e^{\sqrt{2}\eta_{i+1}(t)}y\right)\right]\,{\mathrm d}y
\\[2mm]&=\frac{48}{e^{\sqrt{2}\eta_{i+1}(t)}-1}
   \int_0^\infty\frac{e^{\sqrt{2}\eta_{i+1}(t)}}{\big(1+y\big)^4}\ln\left(1+\frac{y}{e^{\sqrt{2}\eta_{i+1}(t)}}\right)\,{\mathrm d}y+O\left(\eta_{i+1}(t)e^{-2\sqrt{2}\eta_{i+1}(t)}\right)
   \\[2mm]&=8e^{-\sqrt{2}\eta_{i+1}(t)}+O\left(\eta_{i+1}(t)e^{-2\sqrt{2}\eta_{i+1}(t)}\right),
\end{align*}
and
\begin{align*}
   &\int_{\mathbb R}\xi_{i-1}\big(t,x+\gamma_i(t)\big)\left[1-\omega^2(x)\right]\omega'(x)\,{\mathrm d}x
   \\[2mm]&=\frac{48}{e^{\sqrt{2}\eta_{i}(t)}-1}
   \int_0^\infty\frac{y}{\big(1+y\big)^4}\left[\frac{1}{y}\ln\left(1+y\right)-\frac{e^{-\sqrt{2}\eta_{i}(t)}}{y}\ln\left(1+ye^{\sqrt{2}\eta_{i}(t)}\right)\right]\,{\mathrm d}y
\\[2mm]&\quad+\frac{48}{e^{\sqrt{2}\eta_{i}(t)}-1}
   \int_0^\infty\frac{y}{\big(1+y\big)^4}\left[\frac{e^{\sqrt{2}\eta_{i}(t)}}{y}\ln\left(1+ye^{-\sqrt{2}\eta_{i}(t)}\right)-\frac{1}{y}\ln\left(1+y\right)\right]\,{\mathrm d}y
\\[2mm]&=\frac{48}{e^{\sqrt{2}\eta_{i}(t)}-1}
   \int_0^\infty\frac{y}{\big(1+y\big)^4}\left[\frac{e^{\sqrt{2}\eta_{i}(t)}}{y}\ln\left(1+\frac{y}{e^{\sqrt{2}\eta_{i-1}(t)}}\right)
   -\frac{1}{ye^{\sqrt{2}\eta_{i}(t)}}\ln\left(1+ye^{\sqrt{2}\eta_{i}(t)}\right)\right]\,{\mathrm d}y
   \\[2mm]&=8e^{-\sqrt{2}\eta_{i}(t)}+O\left(\eta_{i}(t)e^{-2\sqrt{2}\eta_{i}(t)}\right).
\end{align*}

Thus combining the above arguments, we have that

\begin{equation*}
  d_i(t)=12\sqrt{2}\left\{e^{-\sqrt{2}\eta_i(t)}-e^{-\sqrt{2}\eta_{i+1}(t)}\right\}+O\left(\eta_{i}(t)e^{-2\sqrt{2}\eta_{i}(t)}\right)
  +O\left(\eta_{i+1}(t)e^{-2\sqrt{2}\eta_{i+1}(t)}\right).
\end{equation*}
Thus we obtain the expansions of $d_i(t)$ in Lemma \ref{lem33}.

In the next, we study the function $\widetilde{\xi}_i(t,x)$ defined in \eqref{dxi1} with $i=1,\cdots,k$. There exist the following two cases:

\textbf{Case $1$}: $x\geq0$. By \eqref{dxi1}, we have that
\begin{align}\label{esxi+}
 \nonumber &\widetilde{\xi}_i(t,x)=-\omega'(x)\int_0^x\big(\omega'(s)\big)^{-2}\,{\mathrm d}s\int^{+\infty}_s\widetilde{\textrm{I}}_i(t,\tau)\omega'(\tau)\mathrm{d}\tau
   \\[2mm] \nonumber
   &=-3\omega'(x)\int_0^x\big(\omega'(s)\big)^{-2}\int^{\infty}_s\Big[\big(1-\omega(\tau)\big)^2\big(1+\omega(\tau-\eta_{i+1}(t))\big)
   -\big(1-\omega^2(\tau)\big)\xi_i\big(t,\tau+\gamma_i(t)\big)
   \\[2mm] \nonumber
   &\qquad\qquad-\big(1+\omega(\tau)\big)^2\big(1-\omega(\tau+\eta_{i}(t))\big)
   +\big(1-\omega^2(\tau)\big)\xi_{i-1}\big(t,\tau+\gamma_i(t)\big)\Big]\omega'(\tau)\mathrm{d}\tau\mathrm{d}s
\\[2mm]
&\qquad+d_i(t)\omega'(x)\int_0^x\big(\omega'(s)\big)^{-2}\,{\mathrm d}s\int^{+\infty}_s\big[\omega'(\tau)\big]^2\mathrm{d}\tau
\end{align}
where we have used the fact that

\begin{equation*}
  \int_{\mathbb R} \widehat{\textrm{I}}_1(t,x)\omega'(x)\,{\mathrm d}x=0.
\end{equation*}

We next study the terms in \eqref{esxi+}. By the definition of $\omega(x)$ in \eqref{domega}, there hold that
\begin{align*}
  &\int^{+\infty}_s\big(1-\omega(\tau)\big)^2\big(1+\omega(\tau-\eta_{i+1}(t))\big)\omega'(\tau)\mathrm{d}\tau
  \\[2mm]&=16\sqrt{2}\int^{+\infty}_s\frac{e^{2\sqrt{2}\tau}}{\big(1+e^{\sqrt{2}\tau}\big)^4\big(e^{\sqrt{2}\eta_{i+1}(t)}+e^{\sqrt{2}\tau}\big)}\mathrm{d}\tau
  \\[2mm]&=16\int^{+\infty}_{e^{\sqrt{2}s}}\frac{y}{\big(1+y\big)^4\big(y+e^{\sqrt{2}\eta_{i+1}(t)}\big)}\,{\mathrm d}y
\\[2mm]&=O\left(e^{-2\sqrt{2}s}e^{-\sqrt{2}\eta_{i+1}(t)}\right),
\end{align*}
and
\begin{equation*}
\begin{aligned}
&\int^{+\infty}_s\big(1+\omega(\tau)\big)^2\big(1-\omega(\tau+\eta_{i}(t))\big)\omega'(\tau)d\tau
\\[2mm]&=16\sqrt{2}\int^{+\infty}_s\frac{e^{3\sqrt{2}\tau}}{\big(1+e^{\sqrt{2}\tau}\big)^4\big(1+e^{\sqrt{2}\{\tau+\eta_i(t)\}}\big)}d\tau
\\[2mm]&=16\int^\infty_{e^{\sqrt{2}s}}\frac{y^2}{\big(1+y\big)^4\big(1+ye^{\sqrt{2}\eta_{i}(t)}\big)}\,{\mathrm d}y
\\[2mm]&=O\Big(e^{-\sqrt{2}\eta_i(t)}e^{-2\sqrt{2}s}\Big).
\end{aligned}
\end{equation*}
Moreover, using the estimates of $\xi_i(t,x)$ in Lemma \eqref{esxi}, we get that
\begin{align*}
\int^{+\infty}_s\big(1-\omega^2(\tau)\big)\xi_i\big(t,\tau+\gamma_i(t)\big)\omega'(\tau)\mathrm{d}\tau
&=8\sqrt{2}\int_s^{+\infty}\frac{e^{2\sqrt{2}\tau}}{\left(1+e^{\sqrt{2}\tau}\right)^4}\xi_i\big(t,\tau+\gamma_i(t)\big)\mathrm{d}\tau
\\&=O\left(e^{-\sqrt{2}\eta_i(t)}e^{-2\sqrt{2}s}\right),
\end{align*}
and
\begin{align*}
\int^{+\infty}_s\big(1-\omega^2(\tau)\big)\xi_{i-1}\big(t,\tau+\gamma_i(t)\big)\omega'(\tau)\mathrm{d}\tau
&=8\sqrt{2}\int_s^{+\infty}\frac{e^{2\sqrt{2}\tau}}{\left(1+e^{\sqrt{2}\tau}\right)^4}\xi_{i-1}\big(t,\tau+\gamma_i(t)\big)\mathrm{d}\tau
\\&=O\left(e^{-\sqrt{2}\eta_{i-1}(t)}e^{-2\sqrt{2}s}\right).
\end{align*}

Hence, combining the above arguments, the definition of $\omega(x)$ in\eqref{domega}, Lemma \ref{esxi}  and \ref{esxi+}, we get that
\begin{align}\label{exi+}
  \nonumber\abs{\widetilde{\xi}_i(t,x)}&\leq C\left[e^{-\sqrt{2}\eta_{i-1}(t)}+e^{-\sqrt{2}\eta_{i}(t)}+\abs{d_i(t)}\right]\omega'(x)\int_0^x\big(\omega'(s)\big)^{-2}e^{-2\sqrt{2}s}\,{\mathrm d}s
\\[2mm]&\leq C\left[e^{-\sqrt{2}\eta_{i-1}(t)}+e^{-\sqrt{2}\eta_{i}(t)}+\abs{d_i(t)}\right]e^{-\sqrt{2}x}x,
\end{align}
for all $x\geq0$, where $C>0$ is independent of $t$ and $x$.

\textbf{Case $2$}: $x\leq0$. By the definition of $\xi_i(t,x)$ in \eqref{dxi1} and $\omega(x)$ is an odd function, we have that
\begin{align*}
  \widetilde{\xi}_i(t,-x)&=\omega'(x)\int_0^{-x}\big(\omega'(s)\big)^{-2}\,{\mathrm d}s\int_{-\infty}^s\widetilde{\textrm{I}}_i(t,\tau)\omega'(\tau)\mathrm{d}\tau
\\[2mm]&=-\omega'(x)\int_0^{x}\big(\omega'(s)\big)^{-2}\,{\mathrm d}s\int_{-\infty}^{-s}\widetilde{\textrm{I}}_i(t,\tau)\omega'(\tau)\mathrm{d}\tau
\\[2mm]&=-\omega'(x)\int_0^{x}\big(\omega'(s)\big)^{-2}\,{\mathrm d}s\int^{+\infty}_{s}\widetilde{\textrm{I}}_i(t,-\tau)\omega'(\tau)\mathrm{d}\tau,
\end{align*}
where the function
\begin{align*}
  \widetilde{\textrm{I}}_i(t,-\tau)&=\big(1+\omega(\tau)\big)^2\big(1-\omega(\tau+\eta_{i+1}(t))\big)
   -\big(1-\omega^2(\tau)\big)\xi_i\big(t,-\tau+\gamma_i(t)\big)
   \\[2mm]
   &\quad-\big(1-\omega(\tau)\big)^2\big(1+\omega(\tau-\eta_{i}(t))\big)
   +\big(1-\omega^2(\tau)\big)\xi_{i-1}\big(t,-\tau+\gamma_i(t)\big)-d_i(t)\omega(\tau).
\end{align*}
Thus by similar arguments of the case of $x\geq0$, we can get that
\begin{equation}\label{exi-}
\abs{\widetilde{\xi}_i(t,x)}\leq C\left[e^{-\sqrt{2}\eta_{i-1}(t)}+e^{-\sqrt{2}\eta_{i}(t)}+\abs{d_i(t)}\right]e^{\sqrt{2}x}\abs{x},
\end{equation}
for all $x\leq0$, where $C>0$ is independent of $t$ and $x$.

Combining \eqref{exi+} and \eqref{exi-}, we can get that the estimate of $\widetilde{\xi}_i(t,x)$ in Lemma \ref{lem33}.
Moreover, according to the following facts that
\begin{equation*}
  \partial_x\widetilde{\xi}_i(t,x)=-\omega''(x)\int_0^x\big(\omega'(s)\big)^{-2}\,{\mathrm d}s\int^{+\infty}_s\widetilde{\textrm{I}}_i(t,\tau)\omega'(\tau)\mathrm{d}\tau
-\big(\omega'(x)\big)^{-1}\int^{+\infty}_x\widetilde{\textrm{I}}_i(t,\tau)\omega'(\tau)\mathrm{d}\tau
\end{equation*}
and
\begin{equation*}
  \partial_t\widetilde{\xi}_i(t,x)=-\omega'(x)\int_0^x\big(\omega'(s)\big)^{-2}\,{\mathrm d}s\int^{+\infty}_s\partial_t\widetilde{\textrm{I}}_i(t,\tau)\omega'(\tau)\mathrm{d}\tau,
\end{equation*}
where $\widetilde{\textrm{I}}_i(t,\tau)$ is defined by \eqref{dIi}, similar arguments in above \textbf{Case} $1$ imply that estimates of $\partial_x\widetilde{\xi}_i(t,x)$ and $\partial_x\widetilde{\xi}_i(t,x)$ in Lemma \ref{lem33}. The proof is completed.
\qed

\bigskip
\bigskip
\noindent {\bf Data Availability Statements:} Not Applicable.

\smallskip
\noindent {\bf Conflict of interest Statements:} The authors declared that they have no conflicts of interest to this work.



\end{document}